\newtheorem{theorem}{Theorem}[section]
\newtheorem{lemma}[theorem]{Lemma}
\newtheorem{prop}[theorem]{Proposition}
\theoremstyle{definition}
\newtheorem{defn}[theorem]{Definition}
\newtheorem{remark}[theorem]{Remark}
\newtheorem{exam}[theorem]{Example}
\makeatletter \@addtoreset{equation}{section} \makeatother
\newcommand{\delete}[1]{}
\newcommand {\Ima}{{\rm Im\,}}
\newcommand {\End}{{\rm End\,}}
\newcommand {\Aut}{{\rm Aut\,}}
\newcommand{\phii}[2]{\phi_{#1,#2}}
\begin{document}
\title[Some characterizations of weak left braces]{Some characterizations of weak left braces}


\author{Shoufeng Wang}
\address{School of Mathematics, Yunnan Normal University, Kunming, Yunnan 650500, China}
\email{wsf1004@163.com}


\begin{abstract} As generalizations of skew left braces, weak left braces were introduced recently by Catino, Mazzotta, Miccoli and Stefanelli  to study  ceratin special degenerate  set-theoretical solutions of the Yang-Baxter equation. In this note, as analogues  of the notions of regular subgroups of holomorph of groups, Gamma functions on groups and affine and semi-affine structures on groups, we propose the notions of good inverse subsemigroups and  Gamma functions associated to Clifford semigroups and affine structures on inverse semigroups, respectively,  by which weak left braces are characterized. Moreover, symmetric, $\lambda$-homomorphic  and $\lambda$-anti-homomorphic weak left braces are introduced and the algebraic structures of these weak left braces are given.
\end{abstract}
\makeatletter
\@namedef{subjclassname@2020}{\textup{2020} Mathematics Subject Classification}
\makeatother
\subjclass[2020]{20M18, 16T25, 16Y99}
\keywords{Inverse  semigroup, Clifford semigroup, weak left brace, Gamma function}

\maketitle



\vspace{-0.8cm}

\section{Introduction}

The Yang-Baxter equation first appeared in Yang's study of exact solutions of many-body problems in \cite{Yang} and Baxter's work of the eight-vertex model in \cite{Baxter}. The equation is closely related to many fields in mathematics and physics. Due to the complexity in solving the Yang-Baxter
equation, Drinfeld suggested to study set-theoretical solutions of the Yang-Baxter equation in \cite{Drinfeld}.
To construct set-theoretical solutions of the Yang-Baxter equation by the view of algebra, several kinds of algebraic structures have been introduced in literature. In particular,  Rump introduced left braces in \cite{Rump2} as a generalization of Jacobson radical rings, and a few years later, Ced$\acute{\rm o}$, Jespers  and Okni$\acute{\rm n}$ski \cite{Cedo-Jespers-Okninski} reformulated
Rump's definition of left braces. Guarnieri and Vendramin \cite{Guarnieri-Vendramin} introduced skew left braces as generalization of left braces. In particular, they showed that skew left braces are connected closely with regular subgroups of holomorphs of groups and proved that (skew) left braces can provide (involutive) non-degenerate  set-theoretical solutions of the Yang-Baxter equation. Campedel, Caranti and Del Corso \cite{Campedel} investigated skew left braces by using so-called Gamma functions on groups introduced there. Left braces and  left semi-braces were characterized by affine and semi-affine structures on groups in \cite{Rump3} and \cite{Stefanelli1}, respectively. Moreover, Rump \cite{Rump4} obtained instances of left braces in terms of affine structures of decomposable solvable groups.
On the other hand, several special classes skew left braces, such as bi-skew left braces (i.e. symmetric skew left braces),  $\lambda$-homomorphic  and $\lambda$-anti-homomorphic skew left braces, have been introduced and studied (see \cite{Bardakov1,Bardakov2,Caranti,Childs} and the reference therein).

Recently, Catino, Mazzotta, Miccoli and Stefanelli \cite{Catino-Mazzotta-Miccoli-Stefanelli}, and Catino, Mazzotta and Stefanelli \cite{c1} introduced weak left braces and dual weak left braces by using inverse semigroups and Clifford semigroups, respectively, and proved that weak left braces and dual weak left braces can provide ceratin special degenerate  set-theoretical solutions of the Yang-Baxter equation. More recent information on  weak left braces  and their generalizations  can be found in  \cite{Catino-Mazzotta-Stefanelli3,Gong-Wang,Liu-Wang,Mazzotta}. In particular, the text \cite{Catino-Mazzotta-Stefanelli3} gave a construction method for dual weak left braces.

The aim of this note is to continue the investigation of weak left braces by using the existing results.  We obtain some characterizations of weak left braces,  dual weak left braces and some special classes of them. In Section 2, some necessary preliminaries on inverse semigroups, Clifford semigroups and weak left braces are presented.  In Section 3, we introduce the notions of good inverse subsemigroups and Gamma functions associated to Clifford semigroups and affine structures on inverse semigroups by which weak left braces are characterized. In Section 4, we introduce symmetric, $\lambda$-homomorphic  and $\lambda$-anti-homomorphic weak left braces, respectively, and show that these   weak left braces are all dual weak left braces. The results obtained in the present paper can be regarded as the enrichment and extension of the corresponding results in  the texts \cite{Bardakov1,Bardakov2,Campedel,Caranti,Guarnieri-Vendramin,Rump3,Stefanelli1}.

 \section{Preliminarie}
In this section, we recall some notions and known results on inverse semigroups, Clifford semigroups and weak left braces.
Let $(S, +)$ be a semigroup. We denote the set of all idempotents in $(S, +)$ by $E(S,+)$ and the center of $(S, +)$ by $C(S,+)$, respectively.  That is,
$$E(S, +)=\{x\in S\mid x+x=x\},\,\, C(S,+)=\{x\in S\mid x+a=a+x \mbox{ for all } a\in S\}.$$
According to \cite{A10}, a semigroup $(S,+)$ is called an {\em inverse semigroup} if for all $x\in S$, there exists a unique element $-a\in S$ such that $a=a-a+a$ and $-a+a-a=-a$. Such an element $-a$ is called {\em the  von Neumann inverse} of $a$ in $S$. Let $(S, +)$ be an inverse semigroup. It is well known that $-(a+b)=-b-a$ and $-(-a)=a$ for all $a,b\in S$. Moreover, $E(S,+)$ is a commutative subsemigroup of $(S, +)$, and $$E(S, +)=\{-x+x\mid x\in S\}=\{x-x\mid x\in S\} \mbox{ and }-e=e \mbox{ for all }e\in E(S,+).$$

 The following lemma  gives a characterization of inverse semigroups. Recall that a semigroup $(S,+)$ is {\em regular} if for each $a\in S$, there exists $a'\in S$ such that $a+a'+a=a$.
\begin{lemma}[\cite{A10}]\label{inverse} A semigroup $(S,+)$ is an inverse semigroup if and only if $(S,+)$ is  regular and $e+f=f+e$ for all $e,f\in E(S, +)$.
\end{lemma}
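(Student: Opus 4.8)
The plan is to prove the two implications separately, with essentially all the work concentrated in the converse. The forward implication is immediate from the definition and the facts recalled just before the statement: if $(S,+)$ is an inverse semigroup, then for each $a\in S$ the von Neumann inverse satisfies $a+(-a)+a=a$, so $-a$ witnesses regularity; and the commutativity $e+f=f+e$ for $e,f\in E(S,+)$ is precisely one of the properties listed above (it can also be recovered quickly, by noting that each idempotent is its own von Neumann inverse and then combining $-(x+y)=-y-x$ with the uniqueness of inverses).

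For the converse, assume $(S,+)$ is regular and that idempotents commute; I must produce, for each $a\in S$, an element satisfying both defining relations and show it is unique. First I would upgrade the regular inverse to a genuine von Neumann inverse: choosing $a'$ with $a+a'+a=a$ and setting $b:=a'+a+a'$, a direct computation that repeatedly collapses occurrences of $a+a'+a$ to $a$ gives
\begin{equation*}
a+b+a=a,\qquad b+a+b=b,
\end{equation*}
so $b$ satisfies the two required identities. This existence step uses only regularity, not commutativity.

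The crux is uniqueness, and this is where the commutativity hypothesis does the real work. Suppose $b$ and $c$ both satisfy the two relations relative to $a$. The key observation is that $a+b,\ b+a,\ a+c,\ c+a$ are all idempotents; for instance $(b+a)+(b+a)=b+(a+b+a)=b+a$. I would then expand the central $a$ in $b=b+a+b$ using $a=a+c+a$, bracket the resulting sum into two of these idempotents, commute them (legitimate since $E(S,+)$ is commutative), and collapse via the defining relations to obtain $b=b+a+c$. An analogous computation, starting from $c=c+a+c$ and expanding by $a=a+b+a$ with the matching bracketing, yields $c=b+a+c$. Comparing the two equalities forces $b=c$, so the inverse is unique and the defining conditions of an inverse semigroup hold.

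I expect the uniqueness step to be the only genuine obstacle. One must expand the correct copy of $a$, group the five resulting terms into exactly the right pair of idempotents so that interchanging them is permitted, and then chain the simplifications so that both $b$ and $c$ reduce to the common expression $b+a+c$; the two computations use opposite bracketings, so the symmetry is not entirely literal and needs a little care. By contrast, the existence step and the forward implication are routine manipulations of the defining identities.
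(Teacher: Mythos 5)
Your proposal is correct, but there is nothing in the paper to compare it against: the lemma is stated with a citation to Howie's monograph \cite{A10} and no proof is given in the paper (it is the classical characterization of inverse semigroups, Theorem 5.1.1 in Howie). Your argument is essentially that standard textbook proof. The existence step checks out: with $a+a'+a=a$ and $b:=a'+a+a'$, collapsing occurrences of $a+a'+a$ gives $a+b+a=a$ and $b+a+b=b$. The uniqueness step is also correct and is exactly where commutativity of idempotents enters: $a+b$, $b+a$, $a+c$, $c+a$ are idempotent, and the two oppositely bracketed expansions
\begin{equation*}
b=b+(a+c)+(a+b)=b+(a+b)+(a+c)=(b+a+b)+a+c=b+a+c,
\end{equation*}
\begin{equation*}
c=(c+a)+(b+a)+c=(b+a)+(c+a)+c=b+a+(c+a+c)=b+a+c,
\end{equation*}
force $b=c$, precisely as you describe. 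One caveat concerns your parenthetical shortcut in the forward direction: from $-(e+f)=-f-e=f+e$ and uniqueness you only learn that $f+e$ is \emph{the} inverse of $e+f$; to conclude $e+f=f+e$ you still need the extra observation that $e+f$ is idempotent (hence its own inverse), and moreover the identity $-(x+y)=-y-x$ is itself normally proved using commuting idempotents, so as a from-scratch derivation that route is circular. Your primary justification — that commutativity of $E(S,+)$ is among the facts the paper lists as known for inverse semigroups — is the right one to rely on there.
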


An inverse semigroup $(S, +)$ is called a {\em Clifford semigroup} if $-a+a=a-a$ for all $a\in S$.
In this case, we denote $a^0=-a+a=a-a$ for all $a\in S$.
The following lemma  gives a characterization of Clifford semigroups.

\begin{lemma}[\cite{A10}]\label{Clifford}A semigroup $(S, +)$ is a  Clifford semigroup if and only if $(S,+)$ is a regular semigroup and $E(S, +) \subseteq C(S,+)$.
\end{lemma}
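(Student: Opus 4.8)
The plan is to prove the two implications separately, reducing most of the work to Lemma \ref{inverse} together with two identities valid in any inverse semigroup: $-e=e$ for $e\in E(S,+)$, and the commutativity of idempotents. I first record the two cheap inclusions. If $(S,+)$ is Clifford then it is by definition inverse, hence regular. Conversely, if $(S,+)$ is regular with $E(S,+)\subseteq C(S,+)$, then any two idempotents commute (being central), so Lemma \ref{inverse} already shows $(S,+)$ is inverse; in particular each $a$ has a well-defined von Neumann inverse $-a$. Thus the real content is: in the forward direction, that the idempotents of a Clifford semigroup are central; in the backward direction, that $-a+a=a-a$ holds once the idempotents are central.

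For the forward direction I fix $e\in E(S,+)$ and $a\in S$, and I plan to feed the composite $a+e$ into the Clifford identity $x-x=-x+x$. Since $-e=e$, the inverse of $a+e$ is $e-a$, so the two idempotents attached to $x=a+e$ compute to $x-x=a+e-a$ and $-x+x=e+a^0+e$, where $a^0=-a+a=a-a$. Because idempotents commute I simplify $e+a^0+e=a^0+e$, and the Clifford identity then yields the key relation $a+e-a=a^0+e$. Adding $a$ on the right and using the two one-sided identities $a+a^0=a$ and $a^0+a=a$ (both immediate from $a-a+a=a$) collapses this to $a+e=e+a$, which is exactly centrality of $e$.

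For the backward direction I set $e=a-a$ and $f=-a+a$, both idempotents, and aim to show $e=f$. From the von Neumann identities I first record $e+a=a$ and $a+f=a$. The crux is then a symmetric substitution: writing $e=a+(-a)=(a+f)+(-a)$ and pushing the central idempotent $f$ past $-a$ gives $e=(a-a)+f=e+f$, while writing $f=(-a)+a=(-a)+(e+a)$ and pushing the central idempotent $e$ past $-a$ gives $f=e+f$; hence $e=e+f=f$, i.e. $-a+a=a-a$. I expect this last step — turning centrality into the Clifford equality through the two substitutions — to be the main obstacle, since it is the only place where the centrality hypothesis is used in an essential way; everything else is bookkeeping with the inverse-semigroup identities already recorded in the preliminaries.
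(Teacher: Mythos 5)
Your proof is correct, but there is nothing in the paper to compare it against: the paper states this lemma as a known result imported from Howie's monograph \cite{A10} and gives no proof of its own. Your argument therefore supplies a self-contained derivation where the paper delegates to the literature, and it does so using only material the paper actually records in its preliminaries: Lemma \ref{inverse}, the von Neumann identities, commutativity of idempotents in an inverse semigroup, and $-e=e$ for idempotents. Both directions check out. In the forward direction, applying the Clifford identity to $x=a+e$ gives $a+e-a=a^0+e$ (with $a^0=-a+a=a-a$), and adding $a$ on the right, commuting the idempotents $e$ and $a^0$, and using $a+a^0=a^0+a=a$ collapses this to $a+e=e+a$; note that the identity $a^0+a=a$ is legitimate at that point precisely because you are already in the Clifford setting, where $a^0$ equals $a-a$ as well as $-a+a$. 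In the backward direction, centrality of idempotents gives their commutativity, so Lemma \ref{inverse} yields that $(S,+)$ is inverse, and your two substitutions $e=(a+f)-a$ and $f=-a+(e+a)$, each simplified by pushing the central idempotent past $-a$, both evaluate to $e+f$, forcing $e=f$, i.e. $a-a=-a+a$. This is a more elementary route than the one usually found in \cite{A10}, where this characterization is typically established alongside the structure theorem for Clifford semigroups as strong semilattices of groups; what your computation buys is independence from any structure theory, at the cost of being specific to this single equivalence.
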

Let $(S, +)$ and $(S', +')$ be two Clifford semigroups. A map (respectively, bijection) $\phi: S\rightarrow S'$ is called a
{\em   semigroup homomorphism} (respectively, {\em   semigroup isomorphism}) from $(S, +)$ to $(S', +')$ if $\phi(x+y)=\phi(x)+'\phi(y)$ for all $x,y\in S$.
A {\em   semigroup homomorphism} (respectively,  {\em   semigroup isomorphism}) from $(S, +)$ to itself is called a {\em   semigroup endomorphism} (respectively,  {\em   semigroup automorphism}) on $(S, +)$. The set of all {\em   semigroup endomorphisms} (respectively,  {\em   semigroup automorphisms}) on $(S, +)$ are denoted by $\End (S,+)$  and $\Aut (S,+)$, respectively. Obviously, $\End (S,+)$  and $\Aut (S,+)$ forms  a semigroup and a group with respect to the compositions of maps, respectively.
In the sequel we shall use the following lemma throughout the paper without further mention.
\begin{lemma}[\cite{A10}]\label{basicproperties}\label{jichu} Let $(S, +)$ and $(S',+')$ be two Clifford semigroups, $a,b\in S$, $e\in E(S, +)$ and $\phi: (S, +)\rightarrow (S',+')$ be a semigroup homomorphism. Then
$$a=a-a+a,\,\, -a+a-a=-a,\,\,\, -(a+b)=-b-a,\,\, -(-a)=a,\,\, e+a=a+e,  $$$$e^0=e=-e,\,\,(-a)^0=-a^0=a^{00}=a^0=-a+a=a-a,\,\,a^0+a=a+a^0=a,$$
$$a^0+a^0=a^0, a^0 +b=b+a^0, (a-b)^0=(a+b)^0=a^0+b^0,\,\ \phi(-a)=-\phi(a),\,\,\, \phi(a^0)=\phi(a)^0.$$
\end{lemma}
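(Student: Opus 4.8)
The plan is to derive every identity from the defining equations of the von Neumann inverse together with the two characterizations already in hand (Lemma \ref{inverse} and Lemma \ref{Clifford}) and the facts recalled in the preliminaries, organizing the long list into a few groups and dispatching the trivial ones first. Indeed, the identities $a=a-a+a$ and $-a+a-a=-a$ are literally the two defining equations of $-a$; the identities $-(a+b)=-b-a$ and $-(-a)=a$ were already recorded as well-known facts for inverse semigroups; and $e+a=a+e$ for $e\in E(S,+)$ is exactly the inclusion $E(S,+)\subseteq C(S,+)$ supplied by Lemma \ref{Clifford}. So none of these requires fresh work.

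Next I would handle the idempotent-type relations. I would first note that $a^0=-a+a=a-a$ by the definition of a Clifford semigroup, and check that $a^0$ is itself idempotent: reassociating, $a^0+a^0=(-a+a)+(-a+a)=-a+(a-a+a)=-a+a=a^0$, where the middle step uses $a=a-a+a$. Since idempotents satisfy $-e=e$ (recalled above) and $e^0=e+e=e$, this gives $e^0=e=-e$, and in particular $-a^0=a^0$ and $a^{00}=(a^0)^0=a^0$. The equality $(-a)^0=a^0$ follows from $(-a)^0=-(-a)+(-a)=a-a=a^0$ via $-(-a)=a$, so all six expressions in that line coincide. The absorption laws are again mere reassociations of the first defining identity, namely $a^0+a=(a-a)+a=a-a+a=a$ and $a+a^0=a+(-a+a)=a-a+a=a$, while $a^0+a^0=a^0$ was just shown and $a^0+b=b+a^0$ is the centrality of the idempotent $a^0$.

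For the two remaining algebraic identities I would compute $(a+b)^0=-(a+b)+(a+b)=(-b-a)+(a+b)=-b+(-a+a)+b=-b+a^0+b$, then push $a^0$ past $-b$ using centrality to obtain $-b+b+a^0=b^0+a^0=a^0+b^0$, the last step being commutativity of idempotents. Replacing $b$ by $-b$ and using $(-b)^0=b^0$ yields $(a-b)^0=a^0+b^0$ as well, so both equal $a^0+b^0$.

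Finally, the two homomorphism identities are where I would be most careful, since they are the only places invoking $\phi$ and uniqueness of inverses. For $\phi(-a)=-\phi(a)$ I would verify that $\phi(-a)$ satisfies the two defining equations of the inverse of $\phi(a)$ in $(S',+')$: $\phi(a)+'\phi(-a)+'\phi(a)=\phi(a-a+a)=\phi(a)$ and symmetrically $\phi(-a)+'\phi(a)+'\phi(-a)=\phi(-a+a-a)=\phi(-a)$, and then invoke uniqueness of the von Neumann inverse in $S'$ to conclude. The last identity is then immediate: $\phi(a^0)=\phi(-a+a)=\phi(-a)+'\phi(a)=-\phi(a)+'\phi(a)=\phi(a)^0$. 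Apart from this uniqueness argument, the whole lemma is bookkeeping with associativity and the commutativity of idempotents, so I expect no serious obstacle.
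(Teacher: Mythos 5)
Your proposal is correct in every step. Note, however, that the paper offers no proof of this lemma at all: it is stated as a known result cited from Howie's monograph \cite{A10} and used throughout without justification, so there is no internal argument to compare yours against. What you have written is a self-contained verification of exactly the standard facts the paper imports, and it is the natural one: the first five identities are definitional or already recalled in the preliminaries; the idempotent identities follow from $a^0=-a+a=a-a$, associativity, $-e=e$, and the centrality of idempotents guaranteed by Lemma \ref{Clifford}; the computation $(a+b)^0=-b+a^0+b=-b+b+a^0=a^0+b^0$ and its specialization to $(a-b)^0$ via $(-b)^0=b^0$ are exactly right; and you correctly identified the only point requiring a genuine argument, namely that $\phi(-a)=-\phi(a)$ needs the \emph{uniqueness} of von Neumann inverses in the inverse semigroup $(S',+')$, which you establish by checking that $\phi(-a)$ satisfies both defining equations of the inverse of $\phi(a)$. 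The final identity $\phi(a^0)=\phi(a)^0$ then falls out as you say. In short: a complete and correct proof of a statement the paper leaves to the literature.
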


From \cite{Catino-Mazzotta-Miccoli-Stefanelli},  a triple $(S, +, \cdot)$ is called a {\em weak  left brace} if $(S, +)$ and $(S, \cdot)$ are  inverse  semigroups and the following axioms hold:
\begin{equation}\label{dengshi}x(y+z)=xy-x+xz,\,\,\,\,\,\, x x^{-1}=-x+x,
\end{equation} where $x^{-1}$ is the inverse of $x$ in $(S, \cdot)$ for all $x\in S$.
By the second axiom,  we have $E(S, +)=E(S, \cdot)$.  We also denote this set by $E(S)$ sometimes in this case.  From \cite{c1}, a weak  left brace $(S, +, \cdot)$ is called a {\em dual weak left brace} if $(S, \cdot)$  is a Clifford semigroup. From \cite{Guarnieri-Vendramin}, a dual weak left brace $(S, +, \cdot)$ is called a {\em skew left brace} if
 $(S, +)$ and $(S, \cdot)$ are groups. For weak left braces, we have the following key lemmas by Lemma \ref{basicproperties}.
\begin{lemma}[Theorem 8 in \cite{Catino-Mazzotta-Miccoli-Stefanelli} and Lemma 2 in \cite{Catino-Mazzotta-Stefanelli3}]\label{wang5}
Let $(S, +, \cdot)$ be a  weak left  brace and  $a,b\in T, e\in E(S)$.
Then $(S,+)$ is a Clifford semigroup and $$e  a=e+a=a+e, a^0+a=a+a^0=a,$$
$$a^0+b^0=(a+b)^0, (-a)^0=a^0=a^0-a^0=-a^0=a-a=-a+a.$$
In particular, if $(S, +, \circ)$ is a dual weak left brace,  then $x^0=x-x=-x+x=x  x^{-1}=x^{-1}  x$ for all $x\in S$, and
$$a= a^0  a=a  a^0,\,a  e=a+e=e+a,\,(a  b)^0=a^0  b^0=a^0+b^0,  (a^{-1})^0=a^0.$$
\end{lemma}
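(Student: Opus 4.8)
The plan is to deduce the whole list in two layers: first nail down the two \emph{structural} facts that $E(S,+)=E(S,\cdot)$ and that $(S,+)$ is a Clifford semigroup, and then read off almost every displayed equality as an instance of Lemma \ref{basicproperties} applied to the Clifford semigroup $(S,+)$ (and, in the dual case, to $(S,\cdot)$ as well). For the first fact I argue straight from the second axiom in \eqref{dengshi}: in any inverse semigroup one has $E(S,\cdot)=\{xx^{-1}\mid x\in S\}$ and $E(S,+)=\{-x+x\mid x\in S\}$, so the identity $xx^{-1}=-x+x$ forces these two sets to coincide, and we may write $E(S)$ unambiguously. I also record the auxiliary map $\lambda_a(b)=-a+ab$ and observe that left-distributivity makes each $\lambda_a$ an additive endomorphism of $(S,+)$, since $\lambda_a(b)+\lambda_a(c)=-a+ab-a+ac=-a+a(b+c)=\lambda_a(b+c)$; this is the device through which the multiplication controls the addition.

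The crux is the structural claim that $(S,+)$ is a Clifford semigroup, equivalently, by Lemma \ref{Clifford} and the fact that an inverse semigroup is regular, that $E(S)\subseteq C(S,+)$, i.e.\ $e+a=a+e$ for all $e\in E(S)$, $a\in S$. Since the only bridge between the two operations is the left-distributive law, the strategy is to turn products into sums. Using $e=ee^{-1}=-e+e$ and $-e=e$ one gets $e(e+a)=ee-e+ea=e+ea$, and more generally $a(a^{-1}+b)=aa^{-1}-a+ab=-a+ab$; feeding the factorizations $a=aa^{-1}a=(-a+a)a$ and $a^{-1}a=-a^{-1}+a^{-1}$ into such expansions collapses left multiplication by an idempotent to additive translation, yielding the bridging identity $ea=e+a$, and then a further manipulation that exploits the commutativity of $E(S)$ in both $(S,+)$ and $(S,\cdot)$ delivers the additive centrality $e+a=a+e$. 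I expect exactly this identity-chasing to be the main obstacle: one must track carefully which inverse, $-a$ or $a^{-1}$, occurs at each stage and remember that the distributive law may be applied only to the \emph{second} factor of a product. Once centrality is in hand, Lemma \ref{Clifford} gives that $(S,+)$ is Clifford and $a^0:=-a+a=a-a$ is well defined.

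With $(S,+)$ Clifford, the remaining equalities of the first display are precisely Lemma \ref{basicproperties} read off for $(S,+)$: it supplies $e+a=a+e$, $\,a^0+a=a+a^0=a$, $\,(a+b)^0=a^0+b^0$, $\,(-a)^0=a^0$, $\,-a^0=a^0$ and $a-a=-a+a$ (and $a^0-a^0=a^0+a^0=a^0$), which together with the bridging $ea=e+a$ exhaust the first part. For the dual case I add the hypothesis that $(S,\cdot)$ is Clifford and apply Lemma \ref{basicproperties} a second time, now multiplicatively, writing $a^0$ for the common value. Indeed $aa^{-1}=-a+a=a^0$ and Clifford multiplication gives $aa^{-1}=a^{-1}a$, so all of $x^0=x-x=-x+x=xx^{-1}=x^{-1}x$ agree; the factorizations $a=a^0a=aa^0$ and $(a^{-1})^0=a^0$ are the multiplicative instances of $a^0a=aa^0=a$ and $(x^{-1})^0=x^0$, while $(ab)^0=a^0b^0$ is the multiplicative identity $(x\cdot y)^0=x^0y^0$. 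Finally $ea=ae$ (idempotents are central in the Clifford semigroup $(S,\cdot)$) combined with $ea=e+a=a+e$ gives $ae=a+e=e+a$, and $a^0b^0=a^0+b^0$ is one more application of the bridging $ea=e+a$ with $e=a^0\in E(S)$. This completes the plan.
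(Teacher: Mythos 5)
You should first know that the paper contains no proof of this lemma: it is quoted from Theorem~8 of \cite{Catino-Mazzotta-Miccoli-Stefanelli} and Lemma~2 of \cite{Catino-Mazzotta-Stefanelli3}, with the remark that the listed identities then follow from Lemma~\ref{basicproperties}. So your attempt has to stand on its own. Its architecture is the right one, and large parts are correct: $E(S,+)=E(S,\cdot)$ does follow at once from $xx^{-1}=-x+x$ together with $E(S,\cdot)=\{xx^{-1}\}$, $E(S,+)=\{-x+x\}$; each $\lambda_a$ is an additive endomorphism; once the Clifford property and the bridge $ea=e+a$ are available, everything in the first display is indeed Lemma~\ref{basicproperties} for $(S,+)$, and the dual-case identities follow by applying it again to the Clifford semigroup $(S,\cdot)$ exactly as you say. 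Even your reduction ``bridge $\Rightarrow$ centrality'' is sound: from $ea=e+a$ with $e=aa^{-1}$ one gets $a=(-a+a)+a$, applying this to $-a$ gives $-a=(a-a)+(-a)$, and commutativity of additive idempotents then forces $-a+a=a-a$, i.e.\ Clifford, hence centrality.

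The genuine gap is at the point you yourself flag as ``the main obstacle'': the bridging identity $ea=e+a$ is asserted, not proved, and it is the entire nontrivial content of the cited Theorem~8. The two computations you supply, $e(e+a)=e+ea$ and $a(a^{-1}+b)=-a+ab$, are correct but cannot deliver it, and ``feeding the factorizations $a=aa^{-1}a$ and $a^{-1}a=-a^{-1}+a^{-1}$'' into them yields only trivial or circular identities: multiplicatively $(aa^{-1})a=a$ says nothing about $aa^{-1}+a$, and the case $e=aa^{-1}$ of your bridge is precisely the nontrivial constraint $-a+a+a=a$, which fails for a general pair of inverse semigroups (e.g.\ additively in a Brandt semigroup $a+a-a\neq a$), so it must consume the weak-brace axioms in an essential way. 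A complete argument needs several further ingredients absent from your sketch: the negation formula $-(ab)=-a+a(-b)-a$ (via uniqueness of additive inverses and distributivity), from which $ab=(a-a)+ab+(-a+a)$ and the absorptions $a-a+ab=ab$, $ab+(-a+a)=ab$ follow; the identity $a+a-a=a$ (obtained by computing $-(aa^{-1})$ in two ways), whose instance at $-a$ already yields the Clifford property by the semilattice argument above; and, for the bridge itself, the conjugation formula $\lambda_a(f)=afa^{-1}$ for $f\in E(S)$ together with $ef=e+f$ for idempotents, which give $ea=a(a^{-1}ea)=a+(aa^{-1})e=a+aa^{-1}+e=a+e=e+a$. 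Without this (or an equivalent chain), the first sentence of the lemma, on which all the rest of your plan rests, remains unproven.
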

By the comments after Definition 5 and Proposition 6, Lemma 1, Proposition 7, Theorem 8 in \cite{Catino-Mazzotta-Miccoli-Stefanelli}, we have the following lemma.
\begin{lemma}[\cite{Catino-Mazzotta-Miccoli-Stefanelli}]\label{wangkangcccc}
Let $(T, +, \cdot)$ be a  weak left  brace. Define $\lambda_a: T\rightarrow T, b\mapsto -a+ab$ for all $a\in T$.  Then $\lambda_a\in \End (T,+)$.
Furthermore, define $\lambda: (T, \cdot)\rightarrow \End (T,+), \,\, a\mapsto \lambda_a.$ Then $\lambda$ is a semigroup homomorphism.
Moreover, we have the following result:
\begin{itemize}
\item[(1)] $0$ is the identity of $(T, +)$ if and only if $0$ is the identity of $(T, \cdot)$.
\item[(2)]   $ab=a+\lambda_a(b)$ and $\lambda_a(a^{-1})=-a$ for all $a,b\in T$.
\end{itemize}
\end{lemma}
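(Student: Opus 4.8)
The plan is to establish the four assertions in the order: the endomorphism property of each $\lambda_a$, then item (2), then the multiplicativity of $\lambda$, and finally item (1), feeding each fact into the next.

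First I would check $\lambda_a\in\End(T,+)$ by a one-line computation. For $b,c\in T$ the left-distributive axiom in (\ref{dengshi}) gives $a(b+c)=ab-a+ac$, so
$$\lambda_a(b+c)=-a+a(b+c)=-a+ab-a+ac=(-a+ab)+(-a+ac)=\lambda_a(b)+\lambda_a(c),$$
the penultimate equality being just associativity of $+$. Nothing beyond the first axiom is needed here.

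Next I would prove item (2), which is the real bridge between the two operations. For $ab=a+\lambda_a(b)$ I would compute $a+\lambda_a(b)=a+(-a+ab)=(a-a)+ab=a^0+ab$ and then use the relation $e+x=ex$ for idempotents $e\in E(S)$ from Lemma \ref{wang5}: since the second axiom gives $a^0=-a+a=aa^{-1}$, taking $e=a^0$ yields $a^0+ab=a^0\cdot(ab)=(aa^{-1}a)b=ab$, the last step using $aa^{-1}a=a$ in the inverse semigroup $(T,\cdot)$. For $\lambda_a(a^{-1})=-a$ I would write $\lambda_a(a^{-1})=-a+aa^{-1}=-a+a^0$ and use additive centrality of idempotents together with $(-a)^0=a^0$ and $x^0+x=x$ (both in Lemma \ref{wang5}) applied to $x=-a$, so that $-a+a^0=a^0+(-a)=-a$.

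With (2) available, multiplicativity of $\lambda$ follows from associativity of $\cdot$. Expanding $\lambda_a\lambda_b(c)=-a+a(-b+bc)$ by the distributive axiom gives $-a+a(-b)-a+a(bc)$, which regroups as $\lambda_a(-b)+\lambda_a(bc)$; replacing $\lambda_a(-b)=-\lambda_a(b)=-(-a+ab)=-(ab)+a$ (using Lemma \ref{basicproperties}) and then $a+\lambda_a(bc)=a(bc)$ from (2) collapses this to $-(ab)+a(bc)=-(ab)+(ab)c=\lambda_{ab}(c)$, i.e. $\lambda_{ab}=\lambda_a\circ\lambda_b$. Finally, for item (1): if $0$ is the additive identity then $0\in E(S)$ and $0\cdot x=0+x=x$ makes $0$ an idempotent left identity of $(T,\cdot)$, which in an inverse semigroup is automatically two-sided (apply inverses: $x\cdot 0=(0\cdot x^{-1})^{-1}=(x^{-1})^{-1}=x$); conversely, if $0$ is the multiplicative identity then $0\in E(S)$ and Lemma \ref{wang5} gives $0+x=x+0=0\cdot x=x$, so $0$ is the additive identity. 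The only genuine obstacle is bookkeeping the interplay of the two operations; once the bridging identity $ab=a+\lambda_a(b)$ is in place, the endomorphism property is forced by associativity of $+$ and the multiplicativity of $\lambda$ by associativity of $\cdot$, so I expect the $ab=a+\lambda_a(b)$ step---specifically the use of $e+x=ex$ and $aa^{-1}a=a$---to be the crux.
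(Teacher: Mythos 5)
Your proof is correct in every step, but there is nothing in the paper to compare it against: the paper states this lemma as a known result imported from \cite{Catino-Mazzotta-Miccoli-Stefanelli} (via the comments after Definition 5 and Proposition 6, Lemma 1, Proposition 7 and Theorem 8 there) and gives no proof of its own. What you have produced is a self-contained derivation from the two axioms in (\ref{dengshi}) together with the facts collected in Lemmas \ref{jichu} and \ref{wang5}, and it respects the paper's logical ordering, since Lemma \ref{wang5} precedes the present lemma, so invoking $ea=e+a=a+e$, $x^0+x=x$, $(-a)^0=a^0$ and $E(S,+)=E(S,\cdot)$ is legitimate. The individual computations check out: the endomorphism property indeed needs only the distributive axiom; the bridge $ab=a+\lambda_a(b)$ correctly combines $a-a=a^0=aa^{-1}$ with $e+x=ex$ and $aa^{-1}a=a$; the identity $\lambda_a(a^{-1})=-a$ follows, as you say, from additive centrality of idempotents and $(-a)^0+(-a)=-a$; the multiplicativity computation $\lambda_a\lambda_b=\lambda_{ab}$ correctly uses $\lambda_a(-b)=-\lambda_a(b)$, which is valid because $\lambda_a\in\End(T,+)$ and $(T,+)$ is Clifford, and then the bridge identity; and in item (1) your observation that an idempotent left identity of an inverse semigroup is automatically two-sided, via $x\cdot 0=(0\cdot x^{-1})^{-1}$ together with $0^{-1}=0$, closes the one gap a naive argument would leave. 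In short, your argument supplies an internal proof of a statement the paper merely cites, which is a strictly stronger contribution than reproducing the paper's (nonexistent) argument.
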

  Let $(T,+)$ be a  Clifford semigroup and  $a,b\in T$. Denote $H_{a}=\{x\in T\mid x^{0}=a^{0}\}$. Then it is easy to see that $(H_{a},+)$ is a subgroup of $(T,+)$ and $x^{0}$ is the identity in $(H_{a},+)$ for all $x\in H_a$ by Lemma \ref{jichu}. Obviously, $H_{a}=H_{b}$ if and only if $a^{0}=b^{0}$.
\begin{lemma}\label{jubutonggou}
Let $(T, +, \cdot)$ be a dual weak left  brace and $a\in T$. Then $\lambda_a|_{H_a}\in \Aut(H_a,+)$.
\end{lemma}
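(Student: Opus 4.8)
The plan is to verify three things in turn: that $\lambda_a$ maps $H_a$ into itself, that the induced map is a homomorphism of the group $(H_a,+)$, and that it is bijective; only the last point requires genuine work.

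First I would show $\lambda_a(H_a)\subseteq H_a$. Fix $x\in H_a$, so $x^0=a^0$. Since $\lambda_a(x)=-a+ax$, applying the identities $(u+v)^0=u^0+v^0$, $(-a)^0=a^0$ and $(uv)^0=u^0+v^0$ from Lemma \ref{wang5} gives $\lambda_a(x)^0=(-a)^0+(ax)^0=a^0+(a^0+x^0)=a^0$, because $x^0=a^0$ and $a^0+a^0=a^0$. Hence $\lambda_a(x)\in H_a$. As $\lambda_a\in\End(T,+)$ by Lemma \ref{wangkangcccc}, its restriction $\lambda_a|_{H_a}\colon H_a\to H_a$ is automatically a homomorphism of $(H_a,+)$.

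The main obstacle is bijectivity, which for a group endomorphism does not follow formally (the group $(H_a,+)$ need not be finite), so I would produce an explicit two-sided inverse, namely $\lambda_{a^{-1}}|_{H_a}$. The same computation as above, using $(a^{-1})^0=a^0$ from Lemma \ref{wang5}, shows $\lambda_{a^{-1}}(H_a)\subseteq H_a$. Because $\lambda\colon(T,\cdot)\to\End(T,+)$ is a semigroup homomorphism and $(T,\cdot)$ is a Clifford semigroup with $a\,a^{-1}=a^{-1}a=a^0$, we get $\lambda_a\circ\lambda_{a^{-1}}=\lambda_{a^0}=\lambda_{a^{-1}}\circ\lambda_a$. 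It then remains to evaluate $\lambda_{a^0}$ on $H_a$: since $a^0\in E(S)$ we have $a^0b=a^0+b$ and $-a^0=a^0$, whence $\lambda_{a^0}(b)=-a^0+a^0b=a^0+b$ for all $b\in T$; restricting to $x\in H_a$ and using $x^0=a^0$ together with $a^0+x=x$ yields $\lambda_{a^0}(x)=x$, i.e. $\lambda_{a^0}|_{H_a}=\mathrm{id}_{H_a}$. Therefore $\lambda_a|_{H_a}$ and $\lambda_{a^{-1}}|_{H_a}$ are mutually inverse, and so $\lambda_a|_{H_a}\in\Aut(H_a,+)$.
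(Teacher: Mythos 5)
Your proposal is correct and follows essentially the same route as the paper's proof: show $\lambda_a(H_a)\subseteq H_a$ via the computation $(\lambda_a(x))^0=a^0$, restrict the endomorphism, and exhibit $\lambda_{a^{-1}}|_{H_a}$ as a two-sided inverse by using that $\lambda$ is a homomorphism on $(T,\cdot)$, that $aa^{-1}=a^{-1}a=a^0$, and that $\lambda_{a^0}$ acts as the identity on $H_a$. The only cosmetic difference is that you invoke the Clifford property of $(T,\cdot)$ to get both compositions equal to $\lambda_{a^0}$ at once, where the paper argues one side and says ``dually.''
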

\begin{proof}
Let $x\in H_a$. Then $x^0=a^0$ and $$(\lambda_a(x))^0=(-a+ax)^0=(-a)^0+(ax)^0=a^0+a^0x^0=a^0+a^0a^0=a^0+a^0=a^0,$$ and so $\lambda_a(x)\in H_a$. This implies that $\lambda_a(H_a)\subseteq Ha$.
Since $\lambda_a\in \End(T,+)$ by Lemma \ref{wangkangcccc}, we have $\lambda_a|_{H_a}\in \End(H_a, +)$.  In view of Lemma \ref{wang5}, we have $(a^{-1})^0=a^0$, and so $H_{a^{-1}}=H_a$. Therefore $\lambda_{a^{-1}}|_{H_a}\in \End(H_a, +)$. For every $x\in H_a$, it follows that   $$\lambda_a|_{H_a}\lambda_{a^{-1}}|_{H_a}(x)=\lambda_{aa^{-1}}(x)=\lambda_{a^0}(x)=-a^0+a^0x=a^0+a^0+x=x^0+x^0+x=x$$ by the facts that  $\lambda_a\in \End(T,+)$  and $x^0=a^0$ and Lemmas \ref{wang5} and \ref{wangkangcccc}. This implies   $\lambda_a|_{H_a}\lambda_{a^{-1}}|_{H_a}={\rm id}_{H_a}$. Dually, $\lambda_{a^{-1}}|_{H_a}\lambda_a|_{H_a}={\rm id}_{H_a}$. Thus $\lambda_a|_{H_a}\in \Aut(H_a,+)$.
\end{proof}

\section{Some characterizations of weak left braces}
In this section, we shall give  some characterizations of weak left braces.  We first give an analogue of \cite[Proposition 1.9]{Guarnieri-Vendramin} which is also  a converse of Lemma \ref{wangkangcccc} in some sense.
\begin{prop}
Let $(S, +)$ and $(S, \cdot)$ be two inverse semigroups. Then $S$ satisfies the axiom
\begin{equation}\label{weak1}
x(y+z)=xy-x+xz
\end{equation}
if and only if both
$$\lambda_x: (S,+) \rightarrow (S,+),   x \mapsto -x+xy\,\,\, \mbox{ and } \lambda: (S,\cdot) \rightarrow \End(S,+),   \lambda \mapsto \lambda_x$$
are semigroup homomorphisms for all $x$ in $S$.
\end{prop}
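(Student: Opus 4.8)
The plan is to prove the two implications separately, mirroring \cite[Proposition 1.9]{Guarnieri-Vendramin} but compensating at every step for the absence of cancellation: since $(S,+)$ and $(S,\cdot)$ are merely inverse semigroups, the group manipulations of the skew brace case must be replaced by the inverse-semigroup identities $a-a+a=a$, $-(a+b)=-b-a$, $-(-a)=a$ and by the commutativity of idempotents. The device that makes both directions go through is a single auxiliary identity, namely that the idempotent $x-x$ acts as a left identity on every left multiple of $x$:
\begin{equation}\label{leftid}
(x-x)+xw=xw\qquad(x,w\in S).
\end{equation}
Both implications reduce to \eqref{leftid}, which I would establish in a direction-dependent way.

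For the forward implication I would assume \eqref{weak1}. Additivity of each $\lambda_x$ is immediate, since $\lambda_x(y+z)=-x+x(y+z)=-x+xy-x+xz=(-x+xy)+(-x+xz)=\lambda_x(y)+\lambda_x(z)$; thus $\lambda_x\in\End(S,+)$, and as a homomorphism of inverse semigroups it preserves von Neumann inverses, giving $-x+x(-y)=-(xy)+x$, which I abbreviate $(\dagger)$. To get \eqref{leftid} I would telescope $xw=x\bigl(w+(-w)+w\bigr)$ by two uses of \eqref{weak1} into $xw=xw-x+x(-w)-x+xw$; substituting $(\dagger)$ turns the central block into the idempotent $x-x$, so that $xw=(xw-xw)+(x-x)+xw$, and \eqref{leftid} follows because $(xw-xw)+xw=xw$ and idempotents commute. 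Finally, using \eqref{weak1} on $\lambda_x\bigl(\lambda_y(z)\bigr)=-x+x(-y+yz)$, then $(\dagger)$, then \eqref{leftid} and associativity of $\cdot$, I would collapse it to $-(xy)+x(yz)=\lambda_{xy}(z)$, proving $\lambda_x\lambda_y=\lambda_{xy}$.

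For the converse I would assume that every $\lambda_x$ lies in $\End(S,+)$ and that $\lambda$ is a homomorphism. Additivity again supplies $(\dagger)$. Writing $\lambda_{xy}(z)=\lambda_x\bigl(\lambda_y(z)\bigr)$ and expanding the right-hand side by the additivity of $\lambda_x$ (rather than by \eqref{weak1}, which is not yet available) yields, after $(\dagger)$, the relation $-(xy)+x(yz)=-(xy)+(x-x)+x(yz)$. Specialising $y=w$ and $z=w^{-1}w$, so that $yz=w$, reduces this to $-(xw)+xw=-(xw)+(x-x)+xw$; left-adding $xw$ and using commutativity of idempotents then produces \eqref{leftid}. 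With \eqref{leftid} secured, I would start from the additivity relation $-x+x(y+z)=-x+xy-x+xz$, left-add $x$ to obtain $(x-x)+x(y+z)=(x-x)+xy-x+xz$, and apply \eqref{leftid} to each side (with $w=y+z$ on the left, $w=y$ on the right) to recover exactly \eqref{weak1}.

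The main obstacle, and the real content hidden behind the word ``homomorphism'', is precisely \eqref{leftid} together with the cancellation of the leading $-x$ in the converse. Because $(S,+)$ is not cancellative one cannot strip $-x$ from $-x+x(y+z)=-x+xy-x+xz$ directly; left-adding $x$ is legitimate but leaves the stray idempotent $x-x$, and it is \eqref{leftid} that renders this stray factor harmless on both sides. The delicate point is that \eqref{leftid} must be produced from genuinely different inputs in the two directions---from the telescoping of \eqref{weak1} when \eqref{weak1} is assumed, and from the homomorphism identity via the substitution $yz=w$ when it is the conclusion---while in each case the commutativity of idempotents is what lets the extra factor $x-x$ be absorbed.
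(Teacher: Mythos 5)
Your proof is correct, and its overall architecture matches the paper's: both reduce the two homomorphism conditions to identities, both hinge on the absorption identity $x-x+xw=xw$ (your \eqref{leftid}), and both obtain it in the converse direction by the same specialization $z=w^{-1}w$ so that $yz=w$. The genuine difference lies in the key intermediate fact about inverses. The paper proves $-(xy)=-x+x(-y)-x$ by verifying the two von Neumann equations by hand, and it does this \emph{twice} -- once in each direction, with slightly different computations (in the necessity using \eqref{weak1}, in the sufficiency using the endomorphism identity) -- and then invokes uniqueness of inverses in $(S,+)$. You instead observe that once $\lambda_x$ is known to be additive, it preserves von Neumann inverses as any homomorphism of inverse semigroups does, which yields your $(\dagger)$: $-x+x(-y)=-(xy)+x$, for free and uniformly in both directions. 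This is cleaner and more conceptual; the price is that $(\dagger)$ leaves a stray idempotent $x-x$ where the paper's identity substitutes exactly, so you must establish and apply \eqref{leftid} in \emph{both} directions (via the telescoping of $x(w+(-w)+w)$ in the forward one), whereas the paper needs the absorption identity only in the sufficiency. Net effect: your route trades the paper's repeated uniqueness-of-inverse verifications for one standard structural fact plus systematic bookkeeping of the idempotent $x-x$, which arguably exposes better why the statement survives the loss of cancellation.
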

\begin{proof}We first observe that the conditions that $\lambda_x$ is a semigroup homomorphism for all $x$,  and $\lambda$ is a semigroup homomorphism
are equivalent to the axioms
\begin{equation}\label{weak2}
-x+x(y+z)=-x+xy-x+xz
\end{equation}
and
\begin{equation}\label{weak3}
-xy+xyz=-x+x(-y+yz),
\end{equation}
respectively.
Let $x,y,z\in S$. We first show the necessity. On one hand, we have (\ref{weak2}) quickly by (\ref{weak1}).
On the other hand, we obtain that
$$xy-x+x(-y)-x+xy=x(y-y)-x+xy=x(y-y+y)=xy,$$$$-x+x(-y)-x+xy-x+x(-y)-x=-x+x(-y+y)-x+x(-y)-x$$
$$-x+x(-y+y-y)-x=-x+x(-y)-x.$$ This shows that $-xy=-x+x(-y)-x$ as $(S,+)$ is an inverse semigroup, and so
$$-x+x(-y+yz)=-x+x(-y)-x+xyz=-xy+xyz.$$ This gives (\ref{weak3}).
To show the sufficiency, let $z=y^{-1}y$ in (\ref{weak3}). Then we have
$$-xy+xy=-xy+xyy^{-1}y=-x+x(-y+yy^{-1}y)$$$$=-x+x(-y+y)=-x+x(-y)-x+xy,$$
$$xy=xy-xy+xy=xy-x+x(-y)-x+xy.$$ On the other hand, by (\ref{weak2}),
$$-x+x(-y)-x+xy-x+x(-y)-x=-x+x(-y+y)-x+x(-y)-x$$$$=-x+x(-y+y-y)-x=-x+x(-y)-x.$$
This shows that $-xy=-x+x(-y)-x$ as $(S,+)$ is an inverse semigroup, and so
$xy=x-x(-y)+x$ and $$x-x+xy=x-x+x-x(-y)+x=x-x(-y)+x=xy$$ for all $x, y\in S$. Thus
$$x(y+z)=x-x+x(y+z)=x-x+xy-x+xz=xy-x
=xz.$$This gives (\ref{weak1}).
\end{proof}
\begin{remark}
Let $S=\{0,1\}$ and define  two binary operations $+$ and $\cdot$ as follows: $0+0=0+1=1+0=0, 1+1=1$ and $0\cdot 1=1\cdot 0=1\cdot 1=1, 0\cdot 0=0$. Then one can check that $(S,+)$ and $(S, \cdot)$ are idempotent inverse semigroups and (\ref{weak2}) is satisfied.
Since $0\cdot (1+1)=0\cdot 1=1$ and $0\cdot 1-0+0\cdot 1=1-0+1=1$, it follows that (\ref{weak1}) fails.
On the other hand, let $S=\{0,1,2\}$. Define two binary operations $+$ and $\cdot$ as follows:
\begin{center}
\begin{tabular}{r|rrr}
+ & 0 & 1 & 2\\
\hline
    0 & 0 & 0 & 2 \\
    1 & 0 & 1 & 2 \\
    2 & 2 & 2 & 2
\end{tabular} \hspace{.5cm}
\begin{tabular}{r|rrr}
$\cdot$ & 0 & 1 & 2\\
\hline
    0 & 0 & 2 & 2 \\
    1 & 2 & 1 & 2 \\
    2 & 2 & 2 & 2
\end{tabular}
\end{center}
Then one can check that $(S,+)$ and $(S, \cdot)$ are idempotent inverse semigroups and (\ref{weak3}) is satisfied.
Since $0\cdot (0+1)=0\cdot 0=0$ and $0\cdot 0-0+0\cdot 1=0-0+2=0+2=2$, it follows that (\ref{weak1}) fails.
Finally, we observe that (\ref{weak1})--(\ref{weak3}) together do not imply the axiom $xx^{-1}=-x+x$. For example,
let $S=\{0,1\}$ and define  two binary operations $+$ and $\cdot$ as follows: $0+0=1, 0+1=1+0=0, 1+1=1$ and $0\cdot 1=1\cdot 0=0\cdot 0=0, 1\cdot 1=1$. Then one can check that $(S,+)$ and $(S, \cdot)$ are idempotent inverse semigroups and (\ref{weak1})--(\ref{weak3}) are satisfied.
Since $0\cdot 0^{-1}=0\cdot 0=0$ and $-0+0=0+0=1$, it follows that the involved axiom fails.
\end{remark}

We next characterize weak left braces by using so-called good inverse subsemiroups. To this aim, we need some necessary preliminaries.
Let $(S, +)$ be a Clifford semigroup. Define a multiplication on $\End(S,+)\times (S,+)$ as follows:
$$(f,x)(g,y)=(fg, x+f(y)) \mbox{ for all } (f,x)(g,y)\in \End(S,+)\times (S,+).$$
One can check   that $\End(S,+)\times (S,+)$  forms a semigroup with respect to the above multiplication, and is denoted by $\End(S,+)\rtimes (S,+)$.
\begin{defn}
Let $(S, +)$ be a Clifford semigroup. An inverse (respectively, a Clifford) subsemigroup $H$ of $\End(S,+)\rtimes S$ is called {\em good} if  the following conditions hold:
\begin{itemize}
\item[\rm (G1)] $\pi_2|_H: H\rightarrow S, (f,x)\mapsto x$ is a bijection.
\item[\rm (G2)] If $(f,x)\in H$ and $(f,x)^{-1}=(g,y)$, then $(f, f(-y))\in H$.
\item[\rm (G3)] If $(f,x)\in H$, then $-x+x+f(y)=f(y)$ for all $y\in S$.
\item[\rm (G4)] If $(S,+)$ has an identity $0$, then  $f(0)=-x+x$ for all $(f,x)\in H$.
\end{itemize}
\end{defn}
\begin{prop}\label{good1}
Let $(S, +)$ be a Clifford semigroup and $H$ a good  inverse  (respectively,  Clifford)  subsemigroup  of $\End(S,+)\rtimes (S,+)$. Define a multiplication $\circ$ on $S$ as follows:
\begin{equation}\label{weak4}
a\circ b=a+f(b) \mbox{ for all } a, b\in S,
\end{equation}
where $(\pi_2|_H)^{-1}(a)=(f,a)\in H$. Then $(S, +, \circ)$ forms a weak left brace (respectively, dual weak left brace) and $H\cong (S, \circ)$. We denote the  weak left brace $(S, +, \circ)$ by $\mathcal{B}(H)$.
\end{prop}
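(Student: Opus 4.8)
The plan is to push the multiplicative structure of $H$ onto $S$ along the bijection $\pi_2|_H$ and then to read off the two weak-brace axioms from the defining conditions. First note that $\circ$ is well defined: by (G1) each $a\in S$ is the second coordinate of a \emph{unique} $(f,a)\in H$, so the endomorphism $f$ occurring in (\ref{weak4}) is unambiguous. The main structural step is to show that $\Phi:=\pi_2|_H\colon H\to S$ is a semigroup isomorphism onto $(S,\circ)$. Indeed, for $(f,a),(g,b)\in H$ the product in $\End(S,+)\rtimes(S,+)$ is $(f,a)(g,b)=(fg,\,a+f(b))$, which lies in $H$ because $H$ is a subsemigroup, and its second coordinate is $a+f(b)=a\circ b$. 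Hence $\Phi\big((f,a)(g,b)\big)=\Phi(f,a)\circ\Phi(g,b)$, so $\Phi$ transports the (associative) multiplication of $H$ to $\circ$. As $\Phi$ is a bijection by (G1), the operation $\circ$ is the transport of the operation of $H$; consequently $(S,\circ)$ is a semigroup, $\Phi$ is an isomorphism, and $(S,\circ)$ inherits from $H$ the property of being an inverse (respectively, Clifford) semigroup. This already yields $H\cong(S,\circ)$ and, in the Clifford case, that $(S,\circ)$ is Clifford, as the ``dual'' part requires.

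For the axiom $x\circ(y+z)=(x\circ y)-x+(x\circ z)$, write $(f,x)=(\pi_2|_H)^{-1}(x)$. Since $f\in\End(S,+)$ we have $x\circ(y+z)=x+f(y)+f(z)$, while $(x\circ y)-x+(x\circ z)=x+f(y)+\big(-x+x+f(z)\big)$, and (G3) collapses $-x+x+f(z)$ to $f(z)$. Thus this axiom is immediate from (G3).

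The remaining and genuinely delicate axiom is $x\circ x^{-1}=-x+x$, where $x^{-1}$ is the inverse of $x$ in the inverse semigroup $(S,\circ)$. As $\Phi$ is an isomorphism of inverse semigroups, $x^{-1}=\Phi\big((f,x)^{-1}\big)$; writing $(f,x)^{-1}=(g,y)$ we get $x^{-1}=y$ and $x\circ x^{-1}=\pi_2(\mathbf{e})$, where $\mathbf{e}:=(f,x)(g,y)=(fg,\,w)$ with $w=x+f(y)$ is the idempotent $(f,x)(f,x)^{-1}$ of $H$. The target is therefore $w=x^0$, and I would argue in four steps. (i) By (G3) for $(f,x)$ one has $x^0+f(y)=f(y)$; combining this with the relation $w+fg(x)=x$ coming from $\mathbf{e}(f,x)=(f,x)$ and comparing $0$-parts gives $w^0=f(y)^0=x^0$, so that $x,f(y),w$ all lie in the maximal subgroup $H_{x^0}$. (ii) From $\mathbf{e}^2=\mathbf{e}$ one gets $w+fg(w)=w$, hence $fg(w)=x^0$ in the group $H_{x^0}$. (iii) Now apply (G2) to the self-inverse idempotent $\mathbf{e}=(fg,w)$: since $\mathbf{e}^{-1}=\mathbf{e}$, condition (G2) yields $(fg,\,fg(-w))=(fg,\,-fg(w))=(fg,\,x^0)\in H$. (iv) Both $\mathbf{e}=(fg,w)$ and $\mathbf{e}'':=(fg,x^0)$ are idempotents of $H$ (a short check using $fg(x^0)=fg(fg(w))=fg(w)=x^0$), and a direct computation gives $\mathbf{e}\,\mathbf{e}''=\mathbf{e}$ and $\mathbf{e}''\mathbf{e}=\mathbf{e}''$; since idempotents commute in the inverse semigroup $H$, we conclude $\mathbf{e}=\mathbf{e}''$, that is, $w=x^0=-x+x$. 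This establishes the second axiom of (\ref{dengshi}) and completes the verification that $(S,+,\circ)$ is a weak (respectively, dual weak) left brace.

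I expect the main obstacle to be steps (iii)--(iv): pinning down the second coordinate of the idempotent $(f,x)(f,x)^{-1}$. The essential leverage is (G2), applied \emph{precisely to the self-inverse idempotent} $\mathbf{e}$, which manufactures the companion idempotent $(fg,x^0)$ carrying the ``correct'' second coordinate; the commutativity of idempotents in the inverse semigroup $H$ then forces the two to coincide. Conditions (G1)--(G3) together with the inverse-semigroup structure of $H$ thus carry the entire verification of the two brace axioms, while (G4) is a normalisation condition that becomes relevant when $(S,+)$ has an identity, guaranteeing the compatibility of identities recorded in Lemma~\ref{wangkangcccc}(1).
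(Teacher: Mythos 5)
Your overall strategy---transporting the structure of $H$ along $\pi_2|_H$, deducing distributivity from (G3), and pinning down the idempotent $\mathbf{e}=(f,x)(f,x)^{-1}=(fg,w)$ to prove $x\circ x^{-1}=-x+x$---is sound, and your step (i) is correct. The gap is in step (ii), and it propagates into (iii). From $w+fg(w)=w$ you conclude ``hence $fg(w)=x^0$ in the group $H_{x^0}$,'' but cancellation in $H_{x^0}$ is legitimate only if $fg(w)$ itself lies in $H_{x^0}$, i.e.\ only if $(fg(w))^0=fg(x^0)$ equals $x^0$, and nothing you have proved gives this: taking $0$-parts of $w+fg(w)=w$ yields only $x^0+fg(x^0)=x^0$, which says that $x^0$ lies below $fg(x^0)$ in the semilattice of idempotents, not that they are equal. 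In a Clifford semigroup the implication ``$u+v=u\Rightarrow v=u^0$'' is simply false: in the paper's five-element Example, $0+e=0$ yet $e\neq 0=0^0$. Consequently (G2) applied to $\mathbf{e}$ only produces $(fg,-fg(w))\in H$, and you cannot identify $-fg(w)$ with $x^0$; without the element $(fg,x^0)$, the idempotent-commutation argument in (iv) has nothing to bite on.

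The gap is repairable, but the repair needs an idea absent from your writeup: apply (G3) to the \emph{inverse} element $(g,y)\in H$ to get $y^0+g(x^0)=g(x^0)$, apply the endomorphism $f$ and use $f(y^0)=f(y)^0=x^0$ (your step (i)) to obtain $x^0+fg(x^0)=fg(x^0)$; combined with $x^0+fg(x^0)=x^0$ from the $0$-part of $w+fg(w)=w$, this forces $fg(x^0)=x^0$, so $fg(w)\in H_{x^0}$, and then your cancellation and steps (iii)--(iv) do go through. The paper sidesteps all of this: it applies (G2) to $(f,x)$ itself, obtaining $(f,f(-y))\in H$, verifies by two direct computations (using only that $f,g$ are endomorphisms and the relations from $(f,x)(g,y)(f,x)=(f,x)$ and $(g,y)(f,x)(g,y)=(g,y)$) that $(f,f(-y))$ is an inverse of $(g,y)$, and invokes uniqueness of inverses in the inverse semigroup $H$ to conclude $(f,f(-y))=(f,x)$. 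This gives $f(y)=-x$ at once, hence $x\circ x^{-1}=x+f(y)=x-x=-x+x$. That route is shorter and avoids the membership issue entirely; I recommend substituting it for your steps (ii)--(iv).
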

\begin{proof}
Since $(\pi_2|_H)^{-1}$ is a bijection, we can transfer the multiplication of $H$ to a multiplication $\circ$ on $S$ as follows:
$$a\circ b=(\pi_2|_H)((\pi_2|_H)^{-1}(a) (\pi_2|_H)^{-1}(b))=a+f(b), $$ where $(\pi_2|_H)^{-1}(a)=(f,a)\in H$. Then $(S, \circ)$ forms an inverse semigroup and $\pi_2|_H$ is an isomorphism from $H$ onto $(S, \circ)$.
Let $a,b,c\in S$. Then $$a\circ (b+c)=a+f(b+c)=a+f(b)+f(c)$$$$=a-a+a+f(b)+f(c)=a+f(b)-a+a +f(c)=a\circ b -a+a\circ c.$$ Let $(\pi_2|_H)^{-1}(a)=(f,a)\in H$ and $(f,a)^{-1}=(g,b)$. Then the inverse of $a$ in $(S,\circ)$ is $a^{-1}=b$ and
$(f,a)(g,b)(f,a)=(f,a),\,\, (g,b)(f,a)(g,b)=(g,b).$ This implies that $f=fgf, a+f(b)+fg(a)=a$ and $gfg=g, b+g(a)+gf(b)=b$. So $b-gf(b)+gf(b)=b$.
By (G2), $(f, f(-b))\in H$. Observe that
$$(g,b)(f,f(-b))(g,b)=(gfg, b+gf(-b)+gf(b))=(g,b),$$
$$(f,f(-b))(g,b)(f,f(-b))=(fgf, f(-b)+f(b)+fgf(-b))=(f,f(-b)).$$
Since $H$ is an inverse semigroup, we have $(f,a)^{-1}=(g,b)=(f,f(-b))^{-1}$, and so $(f,a)=(f,f(-b))$.  This yields that $a=f(-b)=-f(b)$ and $f(b)=-a$.
So $$a\circ a^{-1}=a\circ b=a+f(b)=a-a=-a+a.$$ Thus, $(S,+, \circ)$ is a weak left brace. If $H$ is a Clifford subsemigroup, then $(S, \circ)$  is also a Clifford
semigroup, and so $(S,+, \circ)$ is a dual weak left brace.
\end{proof}
\begin{remark}\label{good3}
Let $(S, +)$ be a Clifford semigroup and $H$ a good inverse subsemigroup of $\End(S,+)\rtimes (S, +)$. By Proposition \ref{good1} and its proof, $(S,+,\circ)$ is a weak left brace and $\pi_2|_H$ is a semigroup isomorphism from $H$ onto $(S, \circ)$. By Lemma \ref{wang5}, we have $E(S, +)=E(S, \circ)$ and $e\circ a=e+a$ for all $e\in E(S, +)=E(S, \circ)$ and $a\in S$. Thus
$$x\in E(S, +)\Longleftrightarrow x\in E(S, \circ)\Longleftrightarrow (\pi_2|_H)^{-1}(x)\in E(H),$$
$$(\pi_2|_H)^{-1}(e)(\pi_2|_H)^{-1}(x)=(\pi_2|_H)^{-1}(e+x)$$ for all $e\in E(S,+)$ and $x\in S$.
\end{remark}

Let $(S, +, \cdot)$ be a weak left brace. Then $(S,+)$ is a Clifford semigroup and $(S, \cdot)$ is an inverse semigroup by Lemma \ref{wang5}. Consider the semigroup $\End(S,+)\rtimes (S,+)$. By Lemma \ref{wangkangcccc},
we have $(\lambda_a,a)\in \End(S,+)\rtimes (S,+)$, where $\lambda_a$ is defined as in Lemma \ref{wangkangcccc} for all $a\in S$.
\begin{prop}\label{good2}Let $(S, +, \cdot)$ be a weak (respectively, dual weak) left brace.
With the above notion, ${\mathcal S}(S)=\{(\lambda_a, a)\mid a\in S\}$ is a good inverse (respectively, Clifford) subsemigroup of $\End(S,+)\rtimes (S,+)$.
\end{prop}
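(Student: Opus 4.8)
The plan is to realize $\mathcal{S}(S)$ as an isomorphic copy of $(S,\cdot)$ sitting inside $\End(S,+)\rtimes(S,+)$, and then to verify the four conditions (G1)--(G4) one at a time. First I would check that the map $\Phi\colon(S,\cdot)\to\End(S,+)\rtimes(S,+)$, $a\mapsto(\lambda_a,a)$, is a semigroup homomorphism: since $\lambda$ is a homomorphism (so $\lambda_a\lambda_b=\lambda_{ab}$) and $ab=a+\lambda_a(b)$ by Lemma \ref{wangkangcccc}, one computes
$$(\lambda_a,a)(\lambda_b,b)=(\lambda_a\lambda_b,\,a+\lambda_a(b))=(\lambda_{ab},ab)=\Phi(ab).$$
Hence $\mathcal{S}(S)=\Phi(S)$ is a subsemigroup. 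Because $\pi_2\circ\Phi=\mathrm{id}_S$, the map $\Phi$ is injective, so it is an isomorphism of $(S,\cdot)$ onto $\mathcal{S}(S)$; consequently $\mathcal{S}(S)$ is an inverse semigroup, and it is Clifford exactly when $(S,\cdot)$ is, that is, when $(S,+,\cdot)$ is a dual weak left brace. Setting up $\Phi$ first also gives, for free, that the inverse of $(\lambda_a,a)$ inside $\mathcal{S}(S)$ is $\Phi(a^{-1})=(\lambda_{a^{-1}},a^{-1})$.

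Condition (G1) is then immediate: $\pi_2|_{\mathcal{S}(S)}$ is surjective because every $a\in S$ is the image of $(\lambda_a,a)$, and injective because the second coordinate determines the element. For (G2), take $(f,x)=(\lambda_a,a)$ with inverse $(g,y)=(\lambda_{a^{-1}},a^{-1})$, so $y=a^{-1}$; I must show $(f,f(-y))\in\mathcal{S}(S)$, which amounts to proving $\lambda_a(-a^{-1})=a$. Since $\lambda_a\in\End(S,+)$ preserves von Neumann inverses, $\lambda_a(-a^{-1})=-\lambda_a(a^{-1})=-(-a)=a$, using $\lambda_a(a^{-1})=-a$ from Lemma \ref{wangkangcccc}. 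Thus the apparently new pair collapses to $(f,f(-y))=(\lambda_a,a)\in\mathcal{S}(S)$.

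For (G3), with $(f,x)=(\lambda_a,a)$ I need $-a+a+\lambda_a(y)=\lambda_a(y)$ for all $y\in S$. Writing $-a+a=a^0$ and $\lambda_a(y)=-a+ay$, the key step is the Clifford-semigroup identity $a^0+(-a)=(-a+a)+(-a)=-a+a-a=-a$ from Lemma \ref{basicproperties}, after which $a^0+\lambda_a(y)=(a^0-a)+ay=-a+ay=\lambda_a(y)$. Finally, for (G4), suppose $(S,+)$ has identity $0$; by Lemma \ref{wangkangcccc}(1) this $0$ is also the identity of $(S,\cdot)$, so $a\cdot 0=a$ and therefore $\lambda_a(0)=-a+a\cdot 0=-a+a=a^0=-x+x$, as required. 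Together with the observation that these four conditions do not depend on the inverse-versus-Clifford distinction, this handles both cases of the statement simultaneously.

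I expect the only genuinely delicate point to be (G2). The subtlety is that $\End(S,+)\rtimes(S,+)$ need not itself be an inverse semigroup, so one should not try to compute inverses directly there; this is precisely why I would first establish the isomorphism $\Phi$, which pins down the inverse of $(\lambda_a,a)$ as $(\lambda_{a^{-1}},a^{-1})$ and makes the identity $\lambda_a(-a^{-1})=a$ the whole content of (G2). The verifications of (G1), (G3) and (G4) are then routine manipulations with the idempotent $a^0$ and the basic identities collected in Lemmas \ref{wang5} and \ref{basicproperties}.
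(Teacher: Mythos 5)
Your proof is correct and follows essentially the same route as the paper: first realize $\mathcal{S}(S)$ as the isomorphic image of $(S,\cdot)$ under $a\mapsto(\lambda_a,a)$ (which yields the inverse/Clifford property and pins down $(\lambda_a,a)^{-1}=(\lambda_{a^{-1}},a^{-1})$), then verify (G1)--(G4) using Lemma \ref{wangkangcccc}. In fact your treatment of (G2), computing $(f,f(-y))=(\lambda_a,\lambda_a(-a^{-1}))=(\lambda_a,a)\in\mathcal{S}(S)$ via $\lambda_a(-a^{-1})=-\lambda_a(a^{-1})=-(-a)=a$, is the correct form of what the paper intends; the paper's displayed step $(\lambda_a,\lambda_{a^{-1}}(-a))=(\lambda_{a^{-1}},a^{-1})$ is garbled in its first coordinate (and computes $g(-x)$ rather than $f(-y)$), though it rests on the same key identity $\lambda_x(x^{-1})=-x$.
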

\begin{proof}
Let $a,b\in S$. Then $(\lambda_a, a)(\lambda_b,b)=(\lambda_a\lambda_b, a+\lambda_a(b))=(\lambda_{ab}, ab)$ by Lemma \ref{wangkangcccc}.
This shows that ${\mathcal S}(S)$ is a subsemigroup of $\End(S,+)\rtimes (S,+)$. Obviously, $$\psi: {\mathcal S}(S)\rightarrow (S, \cdot), (\lambda_a, a)\mapsto a$$ is an isomorphism from ${\mathcal S}(S)$ onto $(S, \cdot)$. So ${\mathcal S}(S)$ is an inverse subsemigroup and $(\lambda_a,a)^{-1}=(\lambda_{a^{-1}}, a^{-1})$. Evidently,
$\pi_2|_{{\mathcal S}(S)}: {\mathcal S}(S)\rightarrow S, (\lambda_a,a)\mapsto a$ is a bijection from ${\mathcal S}(S)$ to $S$, and so (G1) holds. Let $(\lambda_a,a)\in {\mathcal S}(S)$. Then $(\lambda_a,a)^{-1}=(\lambda_{a^{-1}}, a^{-1})$. Moreover, $\lambda_{a^{-1}}(-a)=a^{-1}$ by Lemma \ref{wangkangcccc}. So $(\lambda_a, \lambda_{a^{-1}}(-a))=(\lambda_{a^{-1}}, a^{-1})\in H$. This gives (G2). Finally, let $(\lambda_a,a)\in {\mathcal S}(S)$ and $b\in S$. Then $$-a+a+\lambda_a(b)=-a+a+(-a+ab)=-a+ab=\lambda(b).$$ This proves (G3).  If $(S,+)$ has an identity $0$, then   $0$ is also the identity of $(S, \cdot)$  by Lemma \ref{wangkangcccc}.  This gives that $\lambda_a(0)=-a+a\cdot 0=-a+a$ for all $a\in S$.
Thus ${\mathcal S}(S)$ is a good inverse subsemigroup of  $\End(S,+)\rtimes (S,+)$.  In particular,  if $(S, +, \cdot)$ is a  dual weak left brace, then $(S,\cdot)$ is a Clifford semigroup, and so ${\mathcal S}(S)$ is a good  Clifford  subsemigroup.
\end{proof}
The following is an analogue of \cite[Theorem 4.2, Proposition 4.3]{Guarnieri-Vendramin}.
\begin{prop}\label{good4}
Let $(S,+)$ be a Clifford semigroup and denote
$$\mathbb{WB}^+=\{(S,+,\cdot)\mid (S,+,\cdot) \mbox{ forms a weak left brace}\},$$
$$\mathbb{GIS}=\{H\mid H \mbox{ is a good inverse subsemigroup of }\End(S,+)\rtimes (S,+)\}$$
Then
${\mathcal S}: \mathbb{WB}^+\rightarrow \mathbb{GIS},\, (S,+,\cdot)\mapsto {\mathcal S}(S)$ and ${\mathcal B}:\mathbb{GIS}\rightarrow\mathbb{WB}^+,\, H\mapsto  {\mathcal B}(H)$ are mutually inverse bijections, and  dual weak left braces correspond to good Clifford subsemigroups. Moreover, if  $(S,+)$ has an identity $0$, then isomorphic weak left brace structures in $\mathbb{WB}^+$ correspond to conjugate inverse subsemigroups of $\End(S,+)\times (S,+)$.
\end{prop}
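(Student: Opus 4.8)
The two assignments $\mathcal{S}$ and $\mathcal{B}$ are already well defined by Propositions \ref{good2} and \ref{good1}, so the plan is to show they are mutually inverse by verifying both composites, after which bijectivity is automatic and the correspondence between dual weak left braces and good Clifford subsemigroups follows from the ``respectively'' clauses of those two propositions. For $\mathcal{B}\circ\mathcal{S}=\mathrm{id}$, start from $(S,+,\cdot)\in\mathbb{WB}^+$; then $\mathcal{S}(S)=\{(\lambda_a,a)\mid a\in S\}$, so $(\pi_2|_{\mathcal{S}(S)})^{-1}(a)=(\lambda_a,a)$ and the multiplication of $\mathcal{B}(\mathcal{S}(S))$ is $a\circ b=a+\lambda_a(b)$, which equals $ab$ by Lemma \ref{wangkangcccc}(2); hence $\mathcal{B}(\mathcal{S}(S))=(S,+,\cdot)$. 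For $\mathcal{S}\circ\mathcal{B}=\mathrm{id}$, start from a good inverse subsemigroup $H$, write $\mathcal{B}(H)=(S,+,\circ)$ with $a\circ b=a+f(b)$ for $(f,a)=(\pi_2|_H)^{-1}(a)$, and let $\lambda_a^{\circ}$ be the associated endomorphism. Then $\lambda_a^{\circ}(b)=-a+a\circ b=-a+a+f(b)=f(b)$ for all $b$ by (G3), so $(\lambda_a^{\circ},a)=(f,a)$; since $\pi_2|_H$ is a bijection by (G1), these elements fill out $H$ and $\mathcal{S}(\mathcal{B}(H))=H$.

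For the final assertion, assume $(S,+)$ has identity $0$, embed $\Aut(S,+)$ into $\End(S,+)\rtimes(S,+)$ by $\phi\mapsto(\phi,0)$ (with inverse $(\phi^{-1},0)$ inside this embedded copy), declare $H_1,H_2$ conjugate when $(\phi,0)H_1(\phi^{-1},0)=H_2$ for some $\phi\in\Aut(S,+)$, and call $(S,+,\cdot_1),(S,+,\cdot_2)\in\mathbb{WB}^+$ isomorphic when some $\phi\in\Aut(S,+)$ satisfies $\phi(a\cdot_1 b)=\phi(a)\cdot_2 \phi(b)$. The computation driving everything is $(\phi,0)(f,x)(\phi^{-1},0)=(\phi f\phi^{-1},\,\phi(x)+\phi f(0))$. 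Applying it to $(\lambda_a^{1},a)\in\mathcal{S}(S,\cdot_1)$ and using $\lambda_a^{1}(0)=-a+a=a^0$ (valid since $0$ is then also the multiplicative identity by Lemma \ref{wangkangcccc}(1), matching (G4)), the second coordinate collapses to $\phi(a)+\phi(a)^0=\phi(a)$, while $\phi\lambda_a^{1}\phi^{-1}(c)=\phi(-a+a\cdot_1\phi^{-1}(c))=-\phi(a)+\phi(a)\cdot_2 c=\lambda_{\phi(a)}^{2}(c)$. Thus a brace isomorphism $\phi$ forces $(\phi,0)(\lambda_a^{1},a)(\phi^{-1},0)=(\lambda_{\phi(a)}^{2},\phi(a))$, so $\mathcal{S}(S,\cdot_1)$ is conjugated onto $\mathcal{S}(S,\cdot_2)$; conversely, since the second coordinate always reduces to $\phi(a)$, the bijection $\pi_2$ pins the image down to $(\lambda_{\phi(a)}^{2},\phi(a))$, and reading the same identities backwards together with $a\cdot_i b=a+\lambda_a^{i}(b)$ turns a conjugacy into the relation $\phi(a\cdot_1 b)=\phi(a)\cdot_2\phi(b)$.

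I expect the conjugation part to be the only real obstacle. The subtlety is that $\End(S,+)\rtimes(S,+)$ is generally not a monoid: an endomorphism $f$ need not fix $0$, only $f(0)\in E(S,+)$, so $(\mathrm{id},0)$ fails to be a two-sided identity and the naive conjugation map $c_\phi\colon(f,x)\mapsto(\phi,0)(f,x)(\phi^{-1},0)$ is not a bijection on the whole semigroup (one finds $c_\phi c_{\phi^{-1}}(f,x)=(f,x+f(0))$). This is repaired exactly on good subsemigroups, where (G4) gives $f(0)=-x+x=x^0$ and hence $x+f(0)=x$, so the extraneous $f(0)$-terms are absorbed and $c_\phi$ restricts to a genuine bijection between good subsemigroups; one should also check, routinely from (G1)--(G4), that $(\phi,0)H(\phi^{-1},0)$ is again a good inverse subsemigroup, so that the correspondence descends to a bijection between isomorphism classes and conjugacy classes.
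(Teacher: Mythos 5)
Your proposal is correct and follows essentially the same route as the paper: the composite $\mathcal{B}\mathcal{S}$ is handled via $a\circ b=a+\lambda_a(b)=ab$, the composite $\mathcal{S}\mathcal{B}$ via (G3) and (G1), and the conjugation correspondence via the same computation $(\phi,0)(\lambda_a,a)(\phi^{-1},0)=(\lambda_{\phi(a)},\phi(a))$ with (G4) collapsing the second coordinate. Your added remark that $(\mathrm{id},0)$ fails to be a right identity in $\End(S,+)\rtimes(S,+)$, so that (G4) is exactly what makes conjugation well behaved on good subsemigroups, is a worthwhile clarification that the paper uses only implicitly.
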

\begin{proof}By Propositions \ref{good1} and \ref{good2}, the above maps are well-defined and  dual  weak left braces correspond to good Clifford subsemigroups.

Let $(S, +, \cdot)\in \mathbb{WB}^+$ and denote ${\mathcal B}{\mathcal S}(S)=(S, +, \circ)$. Then by the proofs of Propositions \ref{good1} and \ref{good2}, we have $a\circ b=a+\lambda_a(b)=a\cdot b$ for all $a,b\in S$. This shows that $\circ$ and $\cdot$ are the same. Thus ${\mathcal B}{\mathcal S}={\rm id}_{\mathbb{WB}^+}$. Conversely, let $H\in \mathbb{GIS}$ and ${\mathcal B}(H)=(S, +, \circ)$. Then  ${\mathcal S}{\mathcal B}(H)=\{(\lambda_a,a)\mid a\in S\}$ and $\lambda_a(b)=-a+a\circ b$ for all $a, b\in S$. If  $(\lambda_a,a)\in {\mathcal S}{\mathcal B}(H), a\in S$, then for all $b\in S$, by (G3) we have $\lambda_a(b)=-a+a\circ b=-a+a+f(b)=f(b)$, where $(\pi_2|_H)^{-1}(a)=(f,a)\in H$. This shows that $(\lambda_a,a)=(f,a)\in H$. On the other hand, let $(f,a)\in H$. Then by (G3), $f(b)=-a+a+f(b)=-a+a\circ b=\lambda_a(b)$ for all $b\in S$, and so $(f,a)=(\lambda_a, a)\in {\mathcal S}{\mathcal B}(H).$ Thus ${\mathcal S}{\mathcal B}={\rm id}_{\mathbb{GIS}}$.

Assume that $0$ is the identity of $(S,+)$. Then by  by Lemma \ref{wangkangcccc}, $0$ is the identity of $(S, \cdot)$ for each $(S, +, \cdot)\in \mathbb{WB}^+$. Let $(S, +, \cdot), (S, +, \circ)\in \mathbb{WB}^+$ and $\phi: (S, +, \cdot)\rightarrow (S, +, \circ)$ be a weak left brace isomorphism.
Then $(\phi, 0), (\phi^{-1}, 0)\in \End(S,+)\times (S,+)$. Denote the good inverse subsemigroups associated $(S, +, \cdot)$ and $(S, +, \circ)$ by $H_\cdot=\{(\lambda_a, a)\mid a\in S\}$ and $H_\circ=\{(\mu_a, a)\mid a\in S\}$, respectively.  Observe that $$(\phi, 0)(\lambda_a, a)(\phi^{-1}, 0)=(\phi\lambda\phi^{-1}, 0+\phi(a)+\phi\lambda_a(0))=(\phi\lambda\phi^{-1}, 0+\phi(a)+\phi(-a+a\cdot 0))$$$$=(\phi\lambda\phi^{-1}, \phi(a)+\phi(-a+a))=(\phi\lambda\phi^{-1}, \phi(a-a+a))=(\phi\lambda\phi^{-1}, \phi(a)),$$
$$\phi\lambda\phi^{-1}(b)=\phi(-a+a\cdot\phi^{-1}(b))=\phi(-a)+\phi(a)\circ \phi\phi^{-1}(b)=-\phi(a)+\phi(a)\circ  b=\mu_{\phi(a)}(b)$$
for all $a,b\in S$. This shows that $(\phi, 0)(\lambda_a, a)(\phi^{-1}, 0)=(\mu_{\phi(a)}, \phi(a))$ for all $a\in S$. Thus $(\phi,0)H_\cdot (\phi^{-1},0)=H_{\circ}$.  Conversely, assume that $H,K\in \mathbb{GIS}$ and $(\psi,0)H  (\psi^{-1},0)=K$ for some $\psi\in \Aut(S, +)$. Observe that $(\psi, 0), (\psi^{-1}, 0)\in \End(S,+)\rtimes (S,+)$ in this case. Let $a,b\in S$ and $(\pi_2|_H)^{-1}(a)=(f,a), (\pi_2|_K)^{-1}(a)=(g,a)\in K$. Then $a\bullet b=a+f(b)$ and $a\circ b=a+g(b)$. Since $(\psi,0)H  (\psi^{-1},0)=K$, we have $$K\ni=(\psi, 0)(f,a)(\psi^{-1}, 0)=(\psi f\psi^{-1}, 0+\psi(a)+\psi f(0))$$$$\overset{\rm(G4)}{=}(\psi f\psi^{-1}, \psi(a)+\psi(-a+a))=(\psi f\psi^{-1}, \psi(a-a+a))=(\psi f\psi^{-1}, \psi(a)).$$
This gives that $$\psi(a)\circ \psi(b)=\psi(a)+\psi f\psi^{-1}(\psi(b))=\psi(a)+\psi(f(b))=\psi(a+f(b))=\psi(a\cdot b).$$
Thus $\psi$ is an isomorphism from ${\mathcal B}(H)$ onto ${\mathcal B}(K)$.
\end{proof}

Thirdly, we characterize weak left braces by using  Gamma functions on Clifford semigroups. Inspired by the Gamma functions on groups given in \cite{Campedel}, we can formulate the notion of Gamma functions for Clifford semigroups as follows.
\begin{defn}\label{xx}
Let $(S,+)$ be a Clifford semigroup and $\gamma: (S,+)\rightarrow \End(S,+),\, x\mapsto \gamma_x$ be a map. Then $\gamma$ is called a {\em Gamma function} on $(S,+)$ if for all $x,y \in S$, the following conditions hold:
\begin{itemize}
\item[\rm (F1)] $\gamma_x\gamma_y=\gamma_{x+\gamma_{x}(y)},\, \gamma_{x^0}(x)=x$ and $x^0+\gamma_{x}(y)=\gamma_{x}(y)$.
\item[\rm (F2)] If $\gamma_x(x)=x^0$, then $x=x^0$.
\item[\rm (F3)] There exists $x^{-1}\in S$ such that $\gamma_x(x^{-1})=-x$ and  $\gamma_{x^{-1}}(x)=-x^{-1}$.
\item[\rm (F4)] If $e,f\in E(S,+)$, then  $\gamma_e(f)=e+f$.
\end{itemize}
\end{defn}
\begin{defn}\label{xxx}
Let $(S,+)$ be a Clifford semigroup and $\gamma: (S,+)\rightarrow \End(S,+),\, x\mapsto \gamma_x$ be a map. Denote  $H_x=\{a\in S\mid a^0=x^0\}$ for every $x\in S$. Then $\gamma$ is called a {\em dual Gamma function} on $(S,+)$ if for all $x,y \in S$, the following conditions hold:
\begin{itemize}
\item[\rm(D1)] $\gamma_x|_{H_x}$ is an automorphism of the subgroup $(H_x, +)$ of $(S,+)$.
\item[\rm (D2)] $\gamma_x\gamma_y=\gamma_{x+\gamma_{x}(y)}$ and $x^0+\gamma_{x}(y)=\gamma_{x}(y)$.
\item[\rm (D3)] If $e,f\in E(S,+)$, then  $\gamma_e(f)=e+f$.
\end{itemize}
\end{defn}
\begin{prop}\label{good7}Let $(S,+)$ be a Clifford semigroup and $\gamma$ be a   dual Gamma function on $(S,+)$. Then $\gamma$ is a Gamma function on $(S,+)$.
\end{prop}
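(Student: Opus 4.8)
The plan is to derive the four axioms (F1)--(F4) of a Gamma function from the three axioms (D1)--(D3) of a dual Gamma function. Two of the required clauses come essentially for free: the composition law $\gamma_x\gamma_y=\gamma_{x+\gamma_x(y)}$ and the absorption law $x^0+\gamma_x(y)=\gamma_x(y)$, which are the first and third clauses of (F1), are exactly the two statements of (D2), while (F4) is literally (D3). Thus the real content is the middle clause of (F1), namely $\gamma_{x^0}(x)=x$, together with (F2) and (F3), and the unifying tool throughout will be (D1): the restriction $\gamma_x|_{H_x}$ is an automorphism of the group $(H_x,+)$, whose identity element is $x^0$.

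The crucial preliminary step I would establish is that $\gamma_{x^0}|_{H_x}={\rm id}_{H_x}$. Applying (D3) with $e=f=x^0$ gives $\gamma_{x^0}(x^0)=x^0+x^0=x^0$, so by (D2) we get $\gamma_{x^0}\gamma_{x^0}=\gamma_{x^0+\gamma_{x^0}(x^0)}=\gamma_{x^0+x^0}=\gamma_{x^0}$; that is, $\gamma_{x^0}$ is an idempotent endomorphism. Since $(x^0)^0=x^0$ we have $H_{x^0}=H_x$, so by (D1) the map $\gamma_{x^0}|_{H_x}$ is an automorphism of $H_x$; being simultaneously idempotent it must be the identity, as an idempotent bijection is the identity. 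In particular $\gamma_{x^0}(x)=x$ for the element $x\in H_x$, which is the middle clause of (F1). Axiom (F2) now follows from injectivity: because $\gamma_x|_{H_x}$ is a group automorphism it fixes the identity of $H_x$, i.e. $\gamma_x(x^0)=x^0$, so if $\gamma_x(x)=x^0=\gamma_x(x^0)$, then injectivity of $\gamma_x|_{H_x}$ on $x,x^0\in H_x$ forces $x=x^0$.

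For (F3) I would define $x^{-1}:=(\gamma_x|_{H_x})^{-1}(-x)$, which makes sense since $-x\in H_x$ (as $(-x)^0=x^0$) and $\gamma_x|_{H_x}$ is onto; by construction $\gamma_x(x^{-1})=-x$ and $x^{-1}\in H_x$, whence $H_{x^{-1}}=H_x$. Using (D2) together with $x+\gamma_x(x^{-1})=x-x=x^0$ gives $\gamma_x\gamma_{x^{-1}}=\gamma_{x^0}$, which by the previous step restricts to ${\rm id}_{H_x}$; since both $\gamma_x|_{H_x}$ and $\gamma_{x^{-1}}|_{H_x}$ are automorphisms of $H_x$, this identifies $\gamma_{x^{-1}}|_{H_x}=(\gamma_x|_{H_x})^{-1}$ and hence also $\gamma_{x^{-1}}\gamma_x|_{H_x}={\rm id}_{H_x}$. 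Applying $\gamma_{x^{-1}}$ to the relation $\gamma_x(x^{-1})=-x$ and using that $\gamma_{x^{-1}}$ is a semigroup endomorphism (so $\gamma_{x^{-1}}(-x)=-\gamma_{x^{-1}}(x)$) yields $x^{-1}=\gamma_{x^{-1}}\gamma_x(x^{-1})=-\gamma_{x^{-1}}(x)$, i.e. $\gamma_{x^{-1}}(x)=-x^{-1}$, which completes (F3).

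I expect the main obstacle to be the passage from the ambient Clifford semigroup $S$ to the group $H_x$: the hypotheses (D2)--(D3) are stated globally on $S$, whereas the invertibility we exploit (and the very notion $(\gamma_x|_{H_x})^{-1}$) lives only inside $H_x$. The argument therefore hinges on the bookkeeping that every map in sight, namely $\gamma_{x^0}$, $\gamma_x$ and $\gamma_{x^{-1}}$, genuinely preserves $H_x$ and restricts to an honest automorphism there, so that composition laws derived in $S$ may be read off as identities of automorphisms of $H_x$. Once the key identity $\gamma_{x^0}|_{H_x}={\rm id}_{H_x}$ is in hand, no single computation is difficult; the care lies entirely in the restriction-and-composition bookkeeping.
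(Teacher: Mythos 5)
Your proposal is correct and follows essentially the same route as the paper: the same key step $\gamma_{x^0}|_{H_x}={\rm id}_{H_x}$ (idempotent automorphism), the same definition $x^{-1}=(\gamma_x|_{H_x})^{-1}(-x)$, and the same use of injectivity from (D1) for (F2). The only differences are cosmetic: you get $\gamma_{x^0}(x^0)=x^0$ from (D3) where the paper uses (D1) (an automorphism fixes the identity of $H_x$), and you finish (F3) by identifying $\gamma_{x^{-1}}|_{H_x}=(\gamma_x|_{H_x})^{-1}$ and applying $\gamma_{x^{-1}}$ to $\gamma_x(x^{-1})=-x$, whereas the paper computes $\gamma_x(\gamma_{x^{-1}}(x))$ two ways and cancels by injectivity of $\gamma_x$.
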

\begin{proof}Let $x,y\in S$. Then $x^0\in H_{x^0}=H_x$, and so $\gamma_{x^0}|_{H_x}\in \Aut(H_x,+)$ by (D1), whence $\gamma_{x^0}(x^0)=\gamma_{x^0}|_{H_x}(x^0)=x^0$. By (D2), we have $\gamma_{x^0}\gamma_{x^0}=\gamma_{x^0+\gamma_{x^0}(x^0)}=\gamma_{x^0+x^0}=\gamma_{x^0}$, whence
 $\gamma_{x^0}|_{H_x}\gamma_{x^0}|_{H_x}=\gamma_{x^0}|_{H_x}\in \Aut(H_x,+)$. This implies that $\gamma_{x^0}|_{H_x}={\rm id}_{H_x}$, and hence $$\gamma_{x^0}(x)=\gamma_{x^0}|_{H_x}(x)={\rm id}_{H_x}(x)=x.$$ This together with (D2) gives (F1).  Since $\gamma_x\in \Aut(H_x,+)$ by (D1) and $x^0\in H_x$, we have $\gamma_x(x^0)=x^0$.  If $\gamma_x(x)=x^0$, then $x=x^0$ as $\gamma_x$ is injective. Thus (F2) is true. Finally, since $-x\in H_x$ by Lemma \ref{wang5} and  $\gamma_x\in \Aut(H_x,+)$, we have $\gamma^{-1}_x(-x)\in H_x$. Denote $x^{-1}=\gamma^{-1}_x(-x)$. Then $H_x=H_{x^{-1}}$ and $\gamma_{x^{-1}}\in \Aut(H_x,+)$.
Obviously, $\gamma_x(x^{-1})=\gamma_x(\gamma^{-1}_x(-x))=-x$.  Observe that
$$\gamma_x(\gamma_{x^{-1}}(x))=(\gamma_x\gamma_{x^{-1}})(x)\overset{\rm(D2)}{=}\gamma_{x+\gamma_{x}(x^{-1})}(x)=\gamma_{x-x}(x)=\gamma_{x^0}(x)$$$$=x=-(-x)
=-(\gamma_x\gamma^{-1}_x(-x))=-\gamma_x(\gamma^{-1}_x(-x))=\gamma_x(-\gamma^{-1}_x(-x)),$$ it follows that $\gamma_{x^{-1}}(x)=-\gamma^{-1}_x(-x)=-x^{-1}$ as $\gamma_x$ is injective. Thus (F3) holds.
\end{proof}
\begin{prop}\label{good5}Let $(S,+,\cdot)$ be a weak left brace. Then
$\lambda: (S, +)\rightarrow \End(S,+), x\mapsto \lambda_x $
is a Gamma function on $(S,+)$, where $\lambda_x(y)=-x+xy$ for all $x,y\in S$. In this case, we denote $\lambda$ by ${\mathcal G}(S)$. In particular, if $(S,+,\cdot)$ is also a dual weak left brace, then $\lambda$ is a dual Gamma function.
\end{prop}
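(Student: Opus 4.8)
The plan is to verify the four axioms (F1)--(F4) of Definition \ref{xx} one at a time for the map $\lambda$, drawing on the structural facts already assembled in Lemmas \ref{wang5} and \ref{wangkangcccc}. The point to keep in mind throughout is that $(S,+)$ is a Clifford semigroup, that $E(S,+)=E(S,\cdot)=E(S)$, and that the two operations are linked by the identities $ab=a+\lambda_a(b)$, $\lambda_a(a^{-1})=-a$ and $ea=e+a=a+e$, together with $-e=e$, $x^0-x=-x$ and $a^0+a=a$. Most of the work reduces to short substitutions using these identities, so I would present each axiom as essentially a one-line rewriting.

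For (F1), the multiplicativity identity $\lambda_x\lambda_y=\lambda_{x+\lambda_x(y)}$ is immediate: since $\lambda$ is a semigroup homomorphism $(S,\cdot)\to\End(S,+)$ by Lemma \ref{wangkangcccc} we have $\lambda_x\lambda_y=\lambda_{xy}$, and $xy=x+\lambda_x(y)$ by the same lemma. The equality $\lambda_{x^0}(x)=x$ follows by expanding $\lambda_{x^0}(x)=-x^0+x^0 x$, replacing $-x^0$ by $x^0$ and $x^0 x$ by $x^0+x$, and collapsing $x^0+x^0+x$ to $x$. The absorption $x^0+\lambda_x(y)=\lambda_x(y)$ comes from $x^0+(-x)=x^0-x=-x$ (an instance of $-x+x-x=-x$), which turns $x^0+(-x+xy)$ into $-x+xy=\lambda_x(y)$. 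Axiom (F3) is essentially built in: taking $x^{-1}$ to be the $(S,\cdot)$-inverse of $x$, Lemma \ref{wangkangcccc} gives $\lambda_x(x^{-1})=-x$ directly, and applying that same lemma to $x^{-1}$ (using $(x^{-1})^{-1}=x$ in the inverse semigroup $(S,\cdot)$) gives $\lambda_{x^{-1}}(x)=-x^{-1}$. Axiom (F4) is another one-line computation: for $e,f\in E(S)$ one has $\lambda_e(f)=-e+ef=e+(e+f)=e+f$, using $-e=e$ and $ef=e+f$.

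The one axiom that is not a mechanical rewriting is (F2), and this is where I expect the only genuine idea to be needed. Suppose $\lambda_x(x)=x^0$. The trick is to feed this back through the multiplicative identity $x\cdot x=x+\lambda_x(x)$ from Lemma \ref{wangkangcccc}: substituting gives $x\cdot x=x+x^0=x$, so $x$ is idempotent in $(S,\cdot)$. Since $E(S,\cdot)=E(S,+)$, we conclude $x\in E(S,+)$ and hence $x=x^0$, as required. In other words, the content of (F2) is precisely the coincidence of the two idempotent sets, transported across the bridge $ab=a+\lambda_a(b)$; without that bridge the implication would be opaque.

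Finally, for the dual case I would observe that (D2) and (D3) of Definition \ref{xxx} are literally the first two clauses of (F1) together with the clause (F4) already established, so nothing new is needed there. The remaining axiom (D1), that $\lambda_x|_{H_x}$ is an automorphism of the subgroup $(H_x,+)$, is exactly the statement of Lemma \ref{jubutonggou}, which applies because in a dual weak left brace $(S,\cdot)$ is Clifford. Thus the dual Gamma function conditions hold as soon as the Gamma function conditions are in place, with the extra invertibility supplied by Lemma \ref{jubutonggou}.
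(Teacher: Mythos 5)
Your proposal is correct and follows essentially the same route as the paper: axioms (F1)--(F4) are verified one by one from Lemmas \ref{wang5} and \ref{wangkangcccc}, and the dual case is reduced to Lemma \ref{jubutonggou} exactly as the paper does. Your handling of (F2) via $xx=x+\lambda_x(x)=x+x^0=x$ and of the first clause of (F1) via the homomorphism property of $\lambda$ is slightly more economical than the paper's direct expansions, but the underlying argument is the same.
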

\begin{proof} By Lemma \ref{wang5}, $(S,+)$ is a Clifford semigroup. By Lemma \ref{wangkangcccc}, $\lambda_x\in  \End(S,+)$ and $xy=x+\lambda_x(y)$ for all $x, y\in S$, and so the above $\lambda$ is indeed a well-defined function. Let $x,y\in S$. Then
$$\lambda_x\lambda_y(z)=\lambda_x(-y+yz)=\lambda_x(-y)+\lambda(yz)=-\lambda_x(y)+\lambda_x(yz)=-(-x+xy)-x+xyz$$$$=-(x-x+xy)+xyz=-(x+\lambda_x(y))+xyz
=-(x+\lambda_x(y))+(x+\lambda_x(y))z=\lambda_{x+\lambda_x(y)}(z)$$ and $-x+x+\lambda_x(y)=-x+x-x+xy=-x+xy=\lambda_x(y)$. Moreover, by Lemma \ref{wang5}, $$\lambda_{x^0}(x)=-x^0+x^0x=-x^0+x^0+x=x^0+x=x.$$ Thus (F1) holds. If $x^0=\lambda_x(x)=-x+xx$, then $$x=x+x^0=x-x+xx=x^0+xx=x^0xx=xx$$ by Lemma \ref{wang5}. This implies that $x\in E(S,\cdot)=E(S, +)$, and so $x=-x+x$. Thus (F2) holds.  (F3) follows from Lemma \ref{wangkangcccc} (2). If $e,f\in E(S, +)$, then $$\lambda_{e}(f)=-e+ef=e+ef=e+e+f=e+f$$ by Lemma \ref{wang5}. This gives (F4). Thus $\lambda$ is a Gamma function on $(S,+)$. The remaining assertion follows from Lemma \ref{jubutonggou}.
\end{proof}
\begin{prop}\label{good6}Let $(S,+)$ be a Clifford semigroup and $\gamma: (S,+)\rightarrow \End(S,+),\, x\mapsto \gamma_x$ be a Gamma function on $(S,+)$.
Define a binary operation $\circ$ on $S$ as follows: $$x\circ y=x+\gamma_x(y) \mbox{ for all } x,y\in S.$$ Then $(S, +, \circ)$ is a weak left brace and $x^{-1}$ appeared in Definition \ref{xx} is the inverse of $x$ in $(S, \circ)$ for all $x\in S$. In this case, we denote $(S, +, \circ)$ by ${\mathcal B}(\gamma)$. In particular, if $\gamma$ is a dual Gamma function, then $(S, +, \circ)$ is a dual weak left brace.
\end{prop}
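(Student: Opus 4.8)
The plan is to verify directly that $(S,\circ)$ is an inverse semigroup and that the two weak-brace axioms of (\ref{dengshi}) hold, treating associativity and the left-distributive law as routine and concentrating the real effort on the inverse-semigroup structure. First I would check that $\circ$ is associative: expanding $(x\circ y)\circ z=(x+\gamma_x(y))+\gamma_{x+\gamma_x(y)}(z)$ and rewriting $\gamma_{x+\gamma_x(y)}=\gamma_x\gamma_y$ by the first identity in (F1), the fact that $\gamma_x$ is an additive endomorphism collapses this to $x+\gamma_x(y+\gamma_y(z))=x\circ(y\circ z)$. The distributive axiom $x\circ(y+z)=x\circ y-x+x\circ z$ is equally direct: the left side is $x+\gamma_x(y)+\gamma_x(z)$ since $\gamma_x\in\End(S,+)$, while on the right $-x+x=x^0$ and the absorption identity $x^0+\gamma_x(z)=\gamma_x(z)$ of (F1) reduce $x\circ y-x+x\circ z$ to the same expression.

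The heart of the argument is to show $(S,\circ)$ is an inverse semigroup, for which I would invoke Lemma \ref{inverse} and verify regularity together with commutativity of idempotents. The first task is to pin down $E(S,\circ)$, and I claim it equals $E(S,+)$. The inclusion $E(S,+)\subseteq E(S,\circ)$ follows from (F4), since for $e\in E(S,+)$ one has $e\circ e=e+\gamma_e(e)=e+(e+e)=e$. For the reverse inclusion, given $e\circ e=e$, i.e. $e+\gamma_e(e)=e$, left-adding $-e$ and using the absorption identity of (F1) yields $\gamma_e(e)=e^0$, whereupon (F2) forces $e=e^0\in E(S,+)$; this application of (F2) is the step I expect to be the main obstacle, since it is the one axiom whose role is genuinely structural rather than purely computational. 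Regularity then comes from (F3): with $x^{-1}$ as there, one computes $x\circ x^{-1}=x+\gamma_x(x^{-1})=x-x=x^0$, and then $x\circ x^{-1}\circ x=x^0\circ x=x^0+\gamma_{x^0}(x)=x$ using $\gamma_{x^0}(x)=x$ from (F1), and symmetrically $x^{-1}\circ x\circ x^{-1}=x^{-1}$, so $x^{-1}$ is a von Neumann inverse of $x$. Finally, for $e,f\in E(S,\circ)=E(S,+)$, (F4) gives $e\circ f=e+\gamma_e(f)=e+f=f+e=f\circ e$ by commutativity of $E(S,+)$, so idempotents commute and Lemma \ref{inverse} applies.

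With $(S,\circ)$ now an inverse semigroup, uniqueness of von Neumann inverses identifies the element $x^{-1}$ of Definition \ref{xx} as the genuine inverse of $x$ in $(S,\circ)$, and the second axiom $x\circ x^{-1}=-x+x$ is exactly the computation $x\circ x^{-1}=x-x=x^0=-x+x$ already made; together with the distributive law this shows $(S,+,\circ)$ is a weak left brace. For the dual case, Proposition \ref{good7} lets me treat a dual Gamma function as a Gamma function, so $(S,+,\circ)$ is a weak left brace, and to upgrade it I must show $(S,\circ)$ is Clifford, i.e. $x\circ x^{-1}=x^{-1}\circ x$ for all $x$. Here I would use that in the dual setting $x^{-1}=\gamma_x^{-1}(-x)$ satisfies $x^0=(x^{-1})^0$ (as established in the proof of Proposition \ref{good7}), whence $x\circ x^{-1}=x^0=(x^{-1})^0=x^{-1}\circ x$, which is precisely the Clifford property and completes the dual assertion.
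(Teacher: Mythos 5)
Your proposal is correct and follows essentially the same route as the paper's proof: associativity from (F1), regularity from (F3) and (F1), the identification $E(S,\circ)=E(S,+)$ via (F2) and (F4), commuting idempotents plus Lemma \ref{inverse} to get an inverse semigroup, and the dual case via $x^{-1}=\gamma_x^{-1}(-x)\in H_x$ from the proof of Proposition \ref{good7}. The only differences are cosmetic (you collapse the right-hand side of the distributive law with the absorption identity of (F1) where the paper inserts $x^0$ and uses centrality, and you shorten the Clifford check to $(x^{-1})^0=x^0$ instead of the paper's explicit $\gamma_x^{-1}$ computation).
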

\begin{proof}
Let $x,y,z\in S$. Then $$(x\circ y)\circ z=(x+\gamma_x(y))\circ z=x+\gamma_x(y)+\gamma_{x+\gamma_{x}(y)}(z)$$$$\overset{\rm (F1)}=x+\gamma_x(y)+\gamma_x\gamma_y(z) =x+\gamma_x(y+\gamma_y(z))=x\circ (y+\gamma_y(z))=x\circ(y\circ z).$$ This gives that $(S, \circ)$ is a semigroup. By (F3) and (F1), we have
$$x\circ x^{-1}=x+\gamma_{x}(x^{-1})=x-x=-x+x,$$
$$x\circ x^{-1}\circ x=(x-x)\circ x=(-x+x)\circ x$$$$=-x+x+\gamma_{-x+x}(x)=-x+x+x=x-x+x=x,$$
$$x^{-1}\circ x\circ x^{-1}=x^{-1}\circ (x-x)=x^{-1}+\gamma_{x^{-1}}(x-x)$$$$=x^{-1}+\gamma_{x^{-1}}(x)-\gamma_{x^{-1}}(x)=x^{-1}-x^{-1}+x^{-1}=x^{-1}.$$
This shows that $(S, \circ)$ is a regular semigroup and $E(S,+)\subseteq E(S,\circ)$. Conversely, Let $x\in E(S,\circ)$.  Then $x=x\circ x=x+\gamma_x(x)$, and hence $-x+x=-x+x+\gamma_x(x)=\gamma_x(x)$. This together with (F2) gives that $x=-x+x\in E(S,+)$. So $E(S,\circ)\subseteq E(S,+)$. Thus $E(S,+)= E(S,\circ)$. Let $e,f\in E(S,+)= E(S,\circ)$. Then $e\circ f=e+\gamma_e(f)=e+e+f=e+f$ by (F4). Dually, $f\circ e=f+e$. Therefore $e\circ f=f\circ e$. Thus $(S, \circ)$ forms an inverse semigroup and $x^{-1}$ is the inverse of $x$ in $(S, \circ)$.   Moreover,  $$x\circ (y+z)=x+\gamma_x(y+z)=x+\gamma_x(y)+\gamma_x(z)$$$$=x-x+x+\gamma_x(y)+\gamma_x(z)=x+\gamma_x(y)-x+x+\gamma_x(z)=x\circ y-x+x\circ z.$$
We have shown that  $(S, +, \circ)$ is a weak left brace. In particular, if $\gamma$ is a dual Gamma function, then $\gamma_x|_{H_x}$ is an automorphism of the subgroup $(H_x, +)$ of $(S,+)$. By the proof of Proposition \ref{good7}, we have $x^{-1}=\gamma^{-1}_x(-x)\in H_x=H_{x^{-1}}$ and  $$x^{-1}\circ x=x^{-1}+\gamma_{x^{-1}}(x)=x^{-1}-x^{-1}=\gamma^{-1}_x(-x)-\gamma^{-1}_x(-x)$$$$=\gamma^{-1}_x(-x)+\gamma^{-1}_x(x)=\gamma^{-1}_x(-x+x)
=\gamma^{-1}_x(x^0)=x^0=x\circ x^{-1}.$$ This shows that $(S,\circ)$ is a Clifford semigroup. So $(S, +, \circ)$ is a dual weak left brace.
\end{proof}
\begin{prop}\label{main2}
Let $(S,+)$ be a Clifford semigroup and denote
$$\mathbb{WB}^+=\{(S,+,\cdot)\mid (S,+,\cdot) \mbox{ forms a weak left brace}\},$$
$$\mathbb{GF}=\{\gamma \mid \gamma \mbox{ is a Gamma  function on }(S,+)\}$$
Then
$${\mathcal G}: \mathbb{WB}^+\rightarrow \mathbb{GF},\, (S,+,\cdot)\mapsto {\mathcal G}(S) \mbox{ and }{\mathcal B}:\mathbb{GF}\rightarrow\mathbb{WB}^+,\,\gamma\mapsto  {\mathcal B}(\gamma)$$ are mutually inverse bijections, and dual weak left braces correspond to dual Gamma functions.
\end{prop}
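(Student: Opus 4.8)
The plan is to follow the same template as the proof of Proposition~\ref{good4}: first record that both assignments are well defined, then verify the two composite identities $\mathcal{B}\mathcal{G}=\mathrm{id}_{\mathbb{WB}^+}$ and $\mathcal{G}\mathcal{B}=\mathrm{id}_{\mathbb{GF}}$ by direct computation, and finally read off the dual correspondence from the ``in particular'' clauses of the intermediate results. Well-definedness is immediate: Proposition~\ref{good5} shows that $\mathcal{G}(S)=\lambda$ is a Gamma function on $(S,+)$, so $\mathcal{G}$ lands in $\mathbb{GF}$, while Proposition~\ref{good6} shows that $\mathcal{B}(\gamma)=(S,+,\circ)$ is a weak left brace, so $\mathcal{B}$ lands in $\mathbb{WB}^+$. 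Once the two composites are shown to be identities, the maps are automatically mutually inverse bijections.

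For $\mathcal{B}\mathcal{G}=\mathrm{id}_{\mathbb{WB}^+}$, I would start from $(S,+,\cdot)\in\mathbb{WB}^+$, form its Gamma function $\lambda_x(y)=-x+xy$, and then form the induced product $x\circ y=x+\lambda_x(y)$. The point is simply that $x+\lambda_x(y)=x\cdot y$ by Lemma~\ref{wangkangcccc}(2), so $\circ$ and $\cdot$ coincide and $\mathcal{B}\mathcal{G}(S,+,\cdot)=(S,+,\cdot)$.

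For $\mathcal{G}\mathcal{B}=\mathrm{id}_{\mathbb{GF}}$, I would start from a Gamma function $\gamma\in\mathbb{GF}$, form $\mathcal{B}(\gamma)=(S,+,\circ)$ with $x\circ y=x+\gamma_x(y)$, and then compute its associated Gamma function $\gamma'_x(y)=-x+(x\circ y)$. Here $\gamma'_x(y)=-x+x+\gamma_x(y)=x^0+\gamma_x(y)$, using $-x+x=x^0$ from Lemma~\ref{wang5}; the absorption clause $x^0+\gamma_x(y)=\gamma_x(y)$ of (F1) then collapses this to $\gamma_x(y)$, so that $\gamma'=\gamma$. This is the step where the Gamma-function axioms are genuinely used, and it is the only place where the argument could break down if the absorption property were dropped, so I regard it as the crux, even though the computation itself is short.

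Finally, the dual statement needs no extra work. If $(S,+,\cdot)$ is a dual weak left brace, then $\mathcal{G}(S)$ is a dual Gamma function by the last assertion of Proposition~\ref{good5}; conversely, if $\gamma$ is a dual Gamma function, then $\mathcal{B}(\gamma)$ is a dual weak left brace by the last assertion of Proposition~\ref{good6}. Combined with the bijection just established, this shows that $\mathcal{G}$ and $\mathcal{B}$ restrict to mutually inverse bijections between dual weak left braces and dual Gamma functions, which completes the proof.
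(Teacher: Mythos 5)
Your proposal is correct and follows essentially the same route as the paper's own proof: well-definedness and the dual correspondence from Propositions~\ref{good5} and~\ref{good6}, the identity $\mathcal{B}\mathcal{G}=\mathrm{id}_{\mathbb{WB}^+}$ via Lemma~\ref{wangkangcccc}(2), and the identity $\mathcal{G}\mathcal{B}=\mathrm{id}_{\mathbb{GF}}$ via the absorption clause $x^0+\gamma_x(y)=\gamma_x(y)$ of (F1). You even isolate the same crux the paper relies on, so nothing further is needed.
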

\begin{proof}By Propositions \ref{good5} and \ref{good6}, the above ${\mathcal G}$ and ${\mathcal B}$ are well-defined, and dual weak left braces correspond to dual Gamma functions..
Let $(S,+,\cdot)\in \mathbb{WB}^+$. By Proposition \ref{good5},  ${\mathcal G}(S)=\lambda$ and $\lambda_x(y)=-x+xy$  for all $x,y\in S$. Moreover, $x\circ y=x+\lambda_x(y)=xy$ by Proposition \ref{good6} and Lemma \ref{wangkangcccc}. Thus ${\mathcal {BG}}(S)={\rm id}_{\mathbb{WB}^+}$. Conversely, let $\gamma\in \mathbb{GF}$, ${\mathcal B}(\gamma)=(S, +, \circ)$ and ${\mathcal G}({\mathcal B}(\gamma))=\mu$. By Propositions \ref{good5} and \ref{good6}, we have $x\circ y=x+\gamma_x(y)$ and $\mu_x(y)=-x +x\circ y$  for all $x,y\in S$. In view of (F1), we obtain that $$\mu_x(y)=-x +x\circ y=-x+x+\gamma_x(y)=\gamma_x(y)$$  for all $x,y\in S$.
This yields that ${\mathcal G}({\mathcal B}(\gamma))=\gamma$. Thus ${\mathcal G}{\mathcal B}={\rm id}_{\mathbb{GF}}$.
\end{proof}
Combining Propositions \ref{good4} and \ref{main2}, we obtain the following result which is an analogue of \cite[Theorem 2.2]{Campedel} and \cite[Remark 2.3]{Caranti}.
\begin{theorem}\label{main1}Let $(S,+)$ be a Clifford semigroup. The  following data are equivalent:
\begin{itemize}
\item[(1)] A binary operation $\circ$ on $S$ such that $(S,+, \circ)$ is a  weak left brace (respectively, dual weak left brace).
\item[(2)] A good inverse (respectively,  Clifford) subsemigroup of $\End(S,+)\rtimes (S,+)$.
\item[(3)] A Gamma (respectively, dual Gamma) function on $(S,+)$.
\end{itemize}
\end{theorem}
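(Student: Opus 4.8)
The plan is to read the theorem off directly from the two bijection results already established, since the statement itself advertises that it is obtained by combining Propositions \ref{good4} and \ref{main2}. The equivalence of items (1) and (2) is precisely the content of Proposition \ref{good4}: the assignments $\mathcal{S}$ and $\mathcal{B}$ are mutually inverse bijections between $\mathbb{WB}^+$ and $\mathbb{GIS}$, so a binary operation $\circ$ making $(S,+,\circ)$ a weak left brace carries exactly the same information as a good inverse subsemigroup of $\End(S,+)\rtimes(S,+)$. Likewise, the equivalence of (1) and (3) is exactly Proposition \ref{main2}, whose maps $\mathcal{G}$ and $\mathcal{B}$ furnish mutually inverse bijections between $\mathbb{WB}^+$ and $\mathbb{GF}$. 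Composing these two bijections through the common object $\mathbb{WB}^+$ then yields a bijection between $\mathbb{GIS}$ and $\mathbb{GF}$ as well, so all three families of data are in one-to-one correspondence. Concretely, I would simply record that $(1)\Leftrightarrow(2)$ holds by Proposition \ref{good4}, that $(1)\Leftrightarrow(3)$ holds by Proposition \ref{main2}, and that $(2)\Leftrightarrow(3)$ follows by transitivity.

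For the parenthetical ``respectively'' clauses, I would invoke the fact that both propositions already track the dual case on the nose: in Proposition \ref{good4} the dual weak left braces correspond to the good \emph{Clifford} subsemigroups, while in Proposition \ref{main2} they correspond to the \emph{dual} Gamma functions. Since in each setting the distinguishing feature is exactly that $(S,\cdot)$ is a Clifford semigroup, these two refinements are compatible under the composed bijection, which gives the dual version of the equivalence without any additional work.

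The only point requiring attention is bookkeeping rather than mathematics: one must observe that the two maps both written $\mathcal{B}$ do not conflict, as they have the distinct domains $\mathbb{GIS}$ and $\mathbb{GF}$, and that the round-trip identities $\mathcal{B}\mathcal{S}=\mathrm{id}_{\mathbb{WB}^+}$, $\mathcal{S}\mathcal{B}=\mathrm{id}_{\mathbb{GIS}}$, $\mathcal{B}\mathcal{G}=\mathrm{id}_{\mathbb{WB}^+}$ and $\mathcal{G}\mathcal{B}=\mathrm{id}_{\mathbb{GF}}$ proved in the two propositions do pin down genuine bijections. Because all of this has already been verified, I expect no real obstacle here; the theorem is a formal consequence of the preceding two propositions and the transitivity of bijective correspondences.
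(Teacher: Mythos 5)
Your proposal is correct and coincides with the paper's own treatment: the paper states Theorem \ref{main1} with no separate argument beyond the remark that it follows by ``combining Propositions \ref{good4} and \ref{main2},'' which is exactly your citation of those two bijection results (including their tracking of the dual case via good Clifford subsemigroups and dual Gamma functions) together with transitivity.
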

Now we give an example to illustrate Theorem \ref{main1}.
\begin{exam}\label{example1}Consider   Clifford semigroup $S=\{0,e,f,a,b\}$ with $E(S,+)=\{0,e,f\}$ whose three subgroups are $\{0\}, \{e,a\}$ and $\{f,b\}$. The multiplication of $(S, +)$ is given in the first table in Table 1.
\begin{table}[H]
\begin{tabular}{r|rrrrr}
$+$ & $0$ & $e$ & $f$ & $a$&$b$\\
\hline
  $0$& $0$ & $0$ & $0$ & $0$&$0$\\
    $e$ &$0$ & $e$ & $0$ & $a$&$0$ \\
    $f$ &$0$ & $0$ & $f$ & $0$&$b$\\
    $a$ & $0$& $a$ & $0$ & $e$&$0$\\
    $b$ &$0$ & $0$ & $b$ & $0$&$f$
\end{tabular}
\hspace{3mm}
\begin{tabular}{r|rrrrr}
$\circ_1$ & $0$ & $e$ & $f$ & $a$&$b$\\
\hline
  $0$& $0$ & $0$ & $0$ & $0$&$0$\\
    $e$ &$0$ & $e$ & $0$ & $a$&$0$ \\
    $f$ &$0$ & $0$ & $f$ & $0$&$b$\\
    $a$ & $0$& $a$ & $0$ & $e$&$0$\\
    $b$ &$0$ & $0$ & $b$ & $0$&$f$
\end{tabular}
\hspace{3mm}
\begin{tabular}{r|rrrrr}
$\circ_2$ & $0$ & $e$ & $f$ & $a$&$b$\\
\hline
  $0$& $0$ & $0$ & $0$ & $0$&$0$\\
    $e$ &$0$ & $e$ & $0$ & $a$&$0$ \\
    $f$ &$0$ & $0$ & $f$ & $0$&$b$\\
    $a$ & $0$& $0$ & $a$ & $0$&$e$\\
    $b$ &$0$ & $b$ & $0$ & $f$&$0$
\end{tabular}
\vspace{3mm}
\caption{ }
\end{table}
 {\bf Step 1.} We first determine $\End(S,+)$. For convenience, we denote the map $$S\rightarrow S,\, 0\mapsto x_1, e\mapsto x_2, f\mapsto x_3, a\mapsto x_4, b\mapsto x_5$$ by $(x_1x_2x_3x_4x_5)$. Since $\phi(E(S, +))\subseteq E(S, +)$ for all $\phi\in \End(S,+)$, we can classify the endomorphisms of $(S,+)$ by $\phi(E(S, +))$.  By calculations, we have the following cases:

(1)  $|\phi(E(S, +))|=1$. In this case, $$\{\phi_1=(00000), \phi_2=(eeeee),\phi_3=(fffff)\}.$$

(2) $\phi(E(S, +))=\{0,e\}$. In this case,  $$\phi\in \{\phi_4=(00e0e), \phi_5=(00e0a), \phi_6=(0e0e0),\phi_7=(0e0a0)\}.$$

(3)  $\phi(E(S, +))=\{0,f\}$. In this case,   $$\phi\in \{\phi_8=(00f0f), \phi_9=(00f0b), \phi_{10}=(0f0f0),\phi_{11}=(0f0b0)\}.$$

(4) $\{e,f\}\subseteq \phi(E(S, +))$. \\

 {\bf Step 2.} Assume that $H$ is a good inverse subsemigroup of $\End(S,+)\rtimes (S,+)$. Then by (G1), for any $x\in S$, there is a unique $\phi\in H$ such that $(\phi, x)\in H$.

(1) If $(\phi, 0)\in H$, then by (G3) we have $\phi(y)=0+\phi(y)=0$ for all $y\in S$. So $\phi=\phi_1$.

(2) If $(\phi, e)\in H$ or $(\phi, a)\in H$, then $$\phi(y)=-a+a+\phi(y)=-e+e+\phi(y)=e+\phi(y),$$ which implies that
 $\Ima \phi\subseteq\{0,e,a\}$, and so
\begin{equation}\label{li1}
\phi\in \{\phi_1,\phi_2,\phi_4, \phi_5, \phi_6, \phi_7\}.
\end{equation}
  Let $(\phi, e)\in H$. Since $e,f\in E(S,+), e+f=0, e+a=a$, by Remark \ref{good3} we have $$(\phi, e)(\phi,e)=(\phi, e),\, (\phi,e)(\textunderscore\textunderscore, f)=(\phi_1, 0),\, (\phi, e)(\textunderscore\textunderscore, a)=(\textunderscore\textunderscore, a),$$ whence $e+\phi(e)=e, e+\phi(f)=0, e+\phi(a)=a$. Observe that $\phi(E(S,+))\subseteq E(S,+)$, it follows that $\phi(e)=e, \phi(f)\not=e, \phi(a)\not=e$. This together with (\ref{li1}) gives $\phi=\phi_7$, and so $(\phi_7,e)\in H$.

(3) Dually, if $(\phi, b)\in H$, then $\phi\in \{\phi_1,\phi_3,\phi_8, \phi_9, \phi_{10}, \phi_{11}\}$. Moreover, $(\phi_{11}, f)\in H$.

 {\bf Step 3.}  Observe that $E(H)=\{(\phi_1, 0), (\phi_7,e), (\phi_{11}, f)\}$ by   Remark \ref{good3}  and the fact that $E(S,+)=\{0,e,f\}$. Assume that the non-idempotent elements in $H$ are $(\phi,a)$ and $(\psi, b)$.  Then $(\phi,a)\not=(\phi,a)(\phi,a)=(\phi\phi, a+\phi(a))$. This together with (\ref{li1}) gives that $\phi\not=\phi_2, \phi_6$.  Dually, $\psi\not=\phi_3, \phi_{10}$.  Since $H$ is an inverse semigroup and the inverse of each idempotent is itself, we have the following two cases:

 {\em Case 1.} $(\phi,a)^{-1}=(\phi, a)$ and $(\psi, b)^{-1}=(\psi, b)$. In this case,
 $$(\phi\phi\phi, a+\phi(a)+\phi(\phi(a)))=(\phi,a)(\phi, a)(\phi, a)=(\phi, a).$$ This implies that
 $a+\phi(a)+\phi(\phi(a))=a$,  and so $\phi(a), \phi(\phi(a))\in \{e,a\}$.  This together with (\ref{li1}) gives $\phi\in \{\phi_2, \phi_6, \phi_7\}$.
But we already have known that $\phi\not=\phi_2, \phi_6$. So $\phi=\phi_7$. Dually, we have  $\psi=\phi_{11}$.  Thus we have $$H_1=\{(\phi_1,0), (\phi_7, e), (\phi_{11}, f), (\phi_7, a), (\phi_{11}, b)\}.$$ One can check that $H_1$ is indeed a good  Clifford subsemigroup of $\End(S,+)\rtimes (S,+)$.  In this case, ${\mathcal {B}}(H_1)=(S, \circ_1)$ and its  Cayley table is the second table in Table 1. In fact, $(S, \circ_1)=(S, +)$.  In this case, we obtain the trivial dual weak left brace $(S,+,+)$.

{\em Case 2.} $(\phi,a)$ and $(\psi, b)$ are mutually inverse. In this case, we have
$(\phi,a)(\psi, b)(\phi,a)=(\phi,a)$ and  $(\psi, b)(\phi,a)(\psi, b)=(\psi, b)$.  This implies that
\begin{equation}\label{li2}
\phi\psi\phi=\phi,\, a+\phi(b)+\phi(\psi(a))=a,\,\, \psi\psi\psi=\psi,\, b+\psi(b)+\psi(\phi(b))=b
\end{equation}
From these facts,  we obtain $\phi(b), \phi(\psi(a))\in \{e,a\}$ and  $\psi(a), \psi(\phi(b))\in \{f,b\}$. Since $\phi\not=\phi_2, \phi_6$, by (\ref{li1}) we have $\phi\in \{\phi_4, \phi_5\}$. Dually, we have $\psi\in \{\phi_8, \phi_9\}$. Using the facts (\ref{li2}), we obtain $\phi=\phi_4, \psi=\phi_8$ or $\phi=\phi_5, \psi=\phi_9$.  If $\phi=\phi_4, \psi=\phi_8$, then $(\phi_4, a), (\phi_8, b)\in H$, and so $(\phi_6, a)=(\phi_4\phi_8, a+\phi_4(b))=(\phi_4,a)(\phi_8,b)\in H$.
This yields $(\phi_4, a),(\phi_6, a)\in H$.  By (G1) this is impossible. Thus  $\phi=\phi_5, \psi=\phi_9$, and we have $$H_2=\{(\phi_1,0), (\phi_7, e), (\phi_{11}, f), (\phi_5, a), (\phi_{9}, b)\}.$$ One can check  $H_2$ is a good inverse (not Clifford!) subsemigroup of $\End(S,+)\rtimes (S,+)$.  In this case, ${\mathcal {B}}(H_2)=(S, \circ_2)$ and its Cayley table is the third table in Table 1. In fact, $(S, \circ_2)$ is  the Brandt semigroup $B_2$ (see \cite{A10}, page 32).  In this case, we obtain the  (not dual!) weak left brace $(S,+,\circ_2)$.

 {\bf Step 4.}  In conclusion, for the given Clifford semigroup $(S, +)$, we have
$$\mathbb{WB}^+=\{(S,+,\cdot)\mid (S,+,\cdot) \mbox{ forms a weak left brace}\}=\{(S,+,\circ_1), (S,+,\circ_2)\},$$
$$\mathbb{GIS}=\{H\mid H \mbox{ is a good inverse subsemigroup of }\End(S,+)\rtimes (S,+)\}=\{H_1, H_2\}$$
and $(S,+,\circ_i)$ corresponds to $H_i$, $i=1,2$.  Moreover, denote
$$\gamma_1: (S,+)\rightarrow \End(S,+),\,\, 0\mapsto \phi_1, e\mapsto \phi_7, f\mapsto \phi_{11}, a\mapsto \phi_7, b\mapsto \phi_{11},$$
$$\gamma_2: (S,+)\rightarrow \End(S,+),\,\, 0\mapsto \phi_1, e\mapsto \phi_7, f\mapsto \phi_{11}, a\mapsto \phi_5, b\mapsto \phi_{9}.$$
Then we have $$\mathbb{GF}=\{\gamma \mid \gamma \mbox{ is a Gamma  function on }(S,+)\}=\{\gamma_1, \gamma_2\}$$ and $(S,+,\circ_i)$ corresponds to $\gamma_i$, $i=1,2$, where  $\gamma_1$ is a dual Gamma function.
\end{exam}

Finally, we characterize weak left braces by so-called affine structures on inverse semigroups. Observe that left braces and  left semi-braces have been characterized by affine and semi-affine structures on groups in \cite{Rump3} and \cite{Stefanelli1}, respectively.   We begin by giving the following notion.
\begin{defn}\label{affine0}
Let $(S, \cdot)$ be an inverse semigroup and $\diamond$ be a binary operation on $S$.   Then $\diamond$ is called an affine structure on $(S, \cdot)$ if the following conditions hold: For all $a\in S$ and $e\in E(S, \cdot)$,
\begin{itemize}
\item[(\rm {A1})] $(ab)\diamond c=b\diamond(a\diamond c)$.
\item[(\rm A2)]$a\diamond (b(b\diamond c))=(a\diamond b)((a\diamond b)\diamond (a\diamond c))$.
\item[(\rm A3)] $e\diamond a=ea,\,\, a\diamond e=a^{-1}ea$.
\end{itemize}
\end{defn}
\begin{lemma}\label{affine1}Let $(S, \cdot)$ be an inverse semigroup and $\diamond$ be a binary operation on $S$. Define $x+y=x(x\diamond y)$ for all $x,y\in S$.
\begin{itemize}
\item[(1)] If (A1) and (A3) hold, then for all $(S, \cdot)$, $a,b\in S$ and $e\in E(S, \cdot)$, we have $$(aa^{-1})\diamond a=a,\,\, (aa^{-1})(a^{-1}\diamond b)=a^{-1}\diamond b,\,\, a\diamond b=a^{-1}(a+b),\,\,    e+a=a+e=ea.$$
\item[(2)] If (A1) and (A3) hold,  (A2) is equivalent to the axiom $(a+b)+c=a+(b+c)$.
\item[(3)] The axiom (A2) is exactly the axiom $a\diamond(b+c)=(a\diamond b)+(a\diamond c)$.
\end{itemize}

\end{lemma}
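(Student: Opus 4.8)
The plan is to reduce all three parts to one auxiliary identity together with routine unwinding of the definition $x+y=x(x\diamond y)$. Part (3) requires essentially no work: substituting $b+c=b(b\diamond c)$ and $(a\diamond b)+(a\diamond c)=(a\diamond b)((a\diamond b)\diamond(a\diamond c))$ turns the left- and right-hand sides of (A2) verbatim into $a\diamond(b+c)$ and $(a\diamond b)+(a\diamond c)$, so (A2) and the distributivity law are literally the same equation. The engine for the rest is the identity
\[(a^{-1}a)(a\diamond b)=a\diamond b,\]
which I shall prove first.

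First I would establish this key identity using the idempotent trick $a=a(a^{-1}a)$: since $aa^{-1}a=a$, applying (A1) to the factorization $a=a\cdot(a^{-1}a)$ gives $a\diamond b=(a(a^{-1}a))\diamond b=(a^{-1}a)\diamond(a\diamond b)$, and then (A3), applied to the idempotent $a^{-1}a$, rewrites the right-hand side as $(a^{-1}a)(a\diamond b)$. From here part (1) falls out quickly. The first identity $(aa^{-1})\diamond a=(aa^{-1})a=a$ is a direct instance of (A3). The second identity is the key identity read with $a$ replaced by $a^{-1}$ (using $(a^{-1})^{-1}=a$). The third identity is $a^{-1}(a+b)=a^{-1}a(a\diamond b)=a\diamond b$, again the key identity. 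Finally, the last identity follows from (A3) together with commutativity of idempotents in an inverse semigroup: $e+a=e(e\diamond a)=e(ea)=ea$, and $a+e=a(a\diamond e)=a(a^{-1}ea)=(aa^{-1}e)a=(e\,aa^{-1})a=e(aa^{-1}a)=ea$.

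For part (2) I would compute both triple products. Writing $a+b=a(a\diamond b)$ and applying (A1) in the form $(a(a\diamond b))\diamond c=(a\diamond b)\diamond(a\diamond c)$, one gets after regrouping $(a+b)+c=a\big((a\diamond b)+(a\diamond c)\big)$, whereas $a+(b+c)=a\big(a\diamond(b+c)\big)$ holds by definition. Hence if (A2) holds, then part (3) makes the two bracketed factors equal and associativity of $+$ follows immediately.

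The converse, and the only genuinely delicate point, is extracting (A2) from associativity: I would have $a\big(a\diamond(b+c)\big)=a\big((a\diamond b)+(a\diamond c)\big)$ and must cancel the leading $a$, which is illegitimate in a general inverse semigroup. The hard part is therefore to justify this cancellation, and the key identity is exactly what makes it work: both $a\diamond(b+c)$ and $(a\diamond b)+(a\diamond c)$ are fixed by left multiplication by $a^{-1}a$ (for the first, this is the key identity with $b$ replaced by $b+c$; for the second, it follows since $(a^{-1}a)(a\diamond b)=a\diamond b$ and $+$ multiplies on the right). Left-multiplying the equation $a\,x=a\,y$ by $a^{-1}$ then yields $(a^{-1}a)x=(a^{-1}a)y$, i.e.\ $x=y$, which is (A2) by part (3). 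This controlled left cancellation against $a^{-1}a$ is the step I expect to require the most care.
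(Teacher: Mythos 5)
Your proposal is correct and takes essentially the same route as the paper: your key identity $(a^{-1}a)(a\diamond b)=a\diamond b$ is exactly the paper's second identity in part (1) read with $a$ replaced by $a^{-1}$, and the paper's converse direction in part (2) likewise left-multiplies the associativity equation by $a^{-1}$ and uses that identity to absorb the resulting $a^{-1}a$ factor on both sides, just as you do. Part (3) is treated as immediate definition-unwinding in both.
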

\begin{proof}(1) Firstly,
$(aa^{-1})\diamond a=aa^{-1}a=a$ by (A3). Secondly, by (A3) and (A1), we have  $$(aa^{-1})(a^{-1}\diamond b)=(aa^{-1})\diamond (a^{-1}\diamond b)=(a^{-1}aa^{-1}) (\diamond b)=a^{-1}\diamond b.$$ This implies that $a^{-1}(a+b)=a^{-1}a(a+b)=a+b$.
Finally, we have $e+a=e(e\diamond a)=eea=ea$ and $a+e=a(a\diamond e)=aa^{-1}ea=eaa^{-1}a=ea$ by  (A3) and the fact that  $(S, \cdot)$ is an inverse semigroup.

(2) Assume that (A1) and (A3) hold.  Observe that $a+(b+c)=a(a\diamond (b(b\diamond c)))$ $$(a+b)+c=a(a\diamond b)+c=a(a\diamond b)((a(a\diamond b))\diamond c)=a(a\diamond b)((a\diamond b)\diamond (a\diamond c))$$ by (A1). If (A2) holds, them we have $a+(b+c)=(a+b)+c$ certainly.  Conversely, if  $a+(b+c)=(a+b)+c$, then by the second equation in item (1) of the present lemma,  $$(a\diamond b)((a(a\diamond b))=a^{-1}a(a\diamond b)((a(a\diamond b))=a^{-1}(a+(b+c))$$$$=a^{-1}((a+b)+c)=a^{-1}a(a\diamond b)((a\diamond b)\diamond (a\diamond c))=(a\diamond b)((a\diamond b)\diamond (a\diamond c)).$$

(3) This is obvious.
\end{proof}
\begin{prop}\label{affine2}
Let $(S, \cdot)$ be an inverse semigroup with an affine structure $\diamond$. Define $x+y=x(x\diamond y)$ for all $x,y\in S$. Then $(S, +, \cdot)$ forms a weak left brace and we denote ${\mathcal{B}}(\diamond)=(S, +, \cdot)$. Moreover,
\begin{itemize}
\item[(1)]$(S, +, \cdot)$ is a dual weak left brace if and only if $(S,\cdot)$ is a Clifford semigroup.
\item[(2)]$(S, +, \cdot)$ is a skew left brace if and only if $(S,\cdot)$ is a group. In this case, (A3) reduces to the axioms $1_{\rm S}\diamond a=a,\,\, a\diamond 1_{\rm S}=1_{\rm S}$.
\end{itemize}

\end{prop}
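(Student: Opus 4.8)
The plan is to define the additive inverse explicitly as $-a:=a^{-1}\diamond a^{-1}$, to prove that $(S,+)$ is a (Clifford) inverse semigroup with $xx^{-1}=-x+x$, to verify the remaining brace axiom $x(y+z)=xy-x+xz$, and finally to read off (1) and (2) from the equality $E(S,+)=E(S,\cdot)$. Throughout I would freely use the facts already secured in Lemma~\ref{affine1}: that $+$ is associative, that $e+a=a+e=ea$ for every $e\in E(S,\cdot)$, that $a\diamond b=a^{-1}(a+b)$, and that $a\diamond(b+c)=a\diamond b+a\diamond c$. The one technical device that unlocks every computation is the rewriting $p\diamond q=p^{-1}(p+q)$: it turns an arbitrary $\diamond$ into a genuine product and thereby lets me apply (A1) even when the left argument is not visibly a product, which is exactly what is needed to tame expressions such as $(p\diamond q)\diamond r$ and $x\diamond(x^{-1}\diamond x^{-1})$.

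First I would analyze $-a=a^{-1}\diamond a^{-1}$. Applying (A1) to the product $a^{-1}a$ and then (A3) gives $a\diamond(a^{-1}\diamond a^{-1})=(a^{-1}a)\diamond a^{-1}=a^{-1}$, whence $a+(-a)=a\bigl(a\diamond(-a)\bigr)=aa^{-1}$; replacing $a$ by $a^{-1}$ yields $a^{-1}\diamond(a\diamond a)=a$, so distributivity gives $(-a)+a=a^{-1}\diamond\bigl(a^{-1}+(a\diamond a)\bigr)=a^{-1}\diamond(a^{-1}a)=aa^{-1}$ by (A3). Hence $a+(-a)=(-a)+a=aa^{-1}\in E(S,\cdot)$, which at once gives regularity of $(S,+)$ and the axiom $xx^{-1}=-x+x$. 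I would then pin down the idempotents: the inclusion $E(S,\cdot)\subseteq E(S,+)$ is clear from $e+e=ee=e$, while for $u\in E(S,+)$ the formula just proved together with $u+u=u$ and $e+a=ea$ gives
$$uu^{-1}=-u+u=-u+(u+u)=(-u+u)+u=uu^{-1}+u=uu^{-1}u=u,$$
so $u=uu^{-1}\in E(S,\cdot)$ and thus $E(S,+)=E(S,\cdot)$. Since these idempotents are central in $(S,+)$ (because $e+a=a+e$), Lemma~\ref{inverse} shows $(S,+)$ is an inverse semigroup and Lemma~\ref{Clifford} shows it is in fact Clifford.

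The heart of the argument, and the step I expect to be the main obstacle, is the distributive law $x(y+z)=xy-x+xz$. My starting point is the identity obtained by left-multiplying $a\diamond(b+c)=a\diamond b+a\diamond c$ by $a$, namely $x\bigl((x\diamond b)+(x\diamond c)\bigr)=x+b+c$ for all $b,c$. Combined with the Clifford computation $(x+b)-x+(x+c)=x+b+(-x+x)+c=x+b+c$ (here $-x+x=xx^{-1}$ is a central idempotent with $x+xx^{-1}=x$), this already yields the law whenever $y,z$ lie in the image of $b\mapsto x\diamond b$, which one checks, using $p\diamond q=p^{-1}(p+q)$ and the solvability $x\diamond(-x+xw)=x^{-1}xw$, equals $(x^{-1}x)S$. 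The remaining and most delicate work is to push the identity to arbitrary $y,z$: this is a direct computation that repeatedly invokes (A1) after the product-rewriting, distributivity, and the auxiliary reductions $xy-x=x(y+x^{-1})$ and $x\diamond(xz)=(x\diamond x)+x^{-1}xz$. Once this is established, $(S,+)$ is inverse, $xx^{-1}=-x+x$, and the distributive axiom all hold, so $(S,+,\cdot)=\mathcal B(\diamond)$ is a weak left brace.

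Finally, (1) is immediate: by definition $(S,+,\cdot)$ is a dual weak left brace precisely when the (unchanged) semigroup $(S,\cdot)$ is Clifford. For (2) I would use that an inverse semigroup is a group exactly when it has a single idempotent; since $E(S,+)=E(S,\cdot)$, the factor $(S,\cdot)$ is a group iff $(S,+)$ is, so $\mathcal B(\diamond)$ is a skew left brace iff $(S,\cdot)$ is a group. In that case $E(S,\cdot)=\{1_S\}$, and (A3) collapses to $1_S\diamond a=1_S a=a$ together with $a\diamond 1_S=a^{-1}1_S a=a^{-1}a=1_S$, as claimed.
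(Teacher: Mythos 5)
Your set-up work is correct, and in places cleaner than the paper's: the identities $a+(a^{-1}\diamond a^{-1})=(a^{-1}\diamond a^{-1})+a=aa^{-1}$, your short computation $uu^{-1}=-u+(u+u)=(uu^{-1})+u=uu^{-1}u=u$ showing $E(S,+)\subseteq E(S,\cdot)$ (the paper instead goes through $a\diamond a=a^{-1}a$ and then $aa^{-1}=a$), the conclusion that $(S,+)$ is Clifford, and items (1) and (2), which the paper dismisses as obvious. (One small omission: to know that $a^{-1}\diamond a^{-1}$ really is the von Neumann inverse $-a$ you should also check $(-a)+a+(-a)=-a$; this is one line from Lemma~\ref{affine1}(1), namely $aa^{-1}(a^{-1}\diamond a^{-1})=a^{-1}\diamond a^{-1}$.) The genuine problem is the distributive law $x(y+z)=xy-x+xz$. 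You prove it rigorously only for $y,z$ in the image of $b\mapsto x\diamond b$, i.e.\ for $y,z\in(x^{-1}x)S$, and then declare that pushing it to arbitrary $y,z$ ``is a direct computation that repeatedly invokes (A1)'', citing two auxiliary reductions that are themselves unproven and giving no indication of how they combine. But this extension is precisely the hard content of the proposition --- it is what the paper's equations (\ref{affine6})--(\ref{affine9}) and the computations following them are for --- so as written your proposal does not establish the main assertion.

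The gap is fillable along your own lines, which shows the route (genuinely different from the paper's) is viable. Since $xy=x(x^{-1}xy)$ and $x^{-1}xy,\,x^{-1}xz\in(x^{-1}x)S$, your partial law already gives $xy-x+xz=x\bigl(x^{-1}xy+x^{-1}xz\bigr)$, so everything reduces to the single lemma $e(y+z)=ey+ez$ for the idempotent $e=x^{-1}x$; granting it, $x\bigl(x^{-1}xy+x^{-1}xz\bigr)=x\bigl(e(y+z)\bigr)=xx^{-1}x(y+z)=x(y+z)$. And the lemma follows from your toolkit: $(ey)\diamond(ez)=y\diamond\bigl(e\diamond(ez)\bigr)=y\diamond(ez)$ by (A1) and (A3); then $y\diamond(ez)=y\diamond(e+z)=(y\diamond e)+(y\diamond z)=y^{-1}ey\cdot(y\diamond z)$ by Lemma~\ref{affine1}(3), (A3) and Lemma~\ref{affine1}(1); hence, using that idempotents of $(S,\cdot)$ commute, $ey+ez=ey\cdot y^{-1}ey\cdot(y\diamond z)=ey(y\diamond z)=e(y+z)$. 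By contrast, the paper never restricts to $(x^{-1}x)S$: it proves $a(b+c)=ab+a(a^{-1}+c)$ and $-a+ac=a(a^{-1}+c)$ head-on and combines them. Until you write out some such completion of the extension step, your argument proves strictly less than the proposition claims.
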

\begin{proof}
By Lemma \ref{affine1} (2), $(S,+)$ forms a semigroup. Let $a\in S$. Then
\begin{equation}\label{affine3}
a+(a^{-1}\diamond a^{-1})=a(a\diamond (a^{-1}\diamond a^{-1}))\overset{\rm (A1)}{=}a((a^{-1}a)\diamond a^{-1})\overset{\rm (A3)}{=}aa^{-1}aa^{-1}=aa^{-1}. \end{equation}
This implies that
\begin{equation}\label{affine4}
a+(a^{-1}\diamond a^{-1})+a=aa^{-1}+a=aa^{-1}a=a
\end{equation} by Lemma \ref{affine1} (1). Thus $(S, +)$ is a regular semigroup.

We next assert that $E(S,\cdot)=E(S, +)$. If $e\in E(S,+)$, then $e+e=ee=e$  by Lemma \ref{affine1} (1), and so $e\in E(S, +)$.  Conversely, if $a\in E(S, +)$, then $a=a+a=a(a\diamond a)$, and so $a^{-1}a=a^{-1}a(a\diamond a)=a\diamond a$ by Lemma \ref{affine1} (1). This implies that
$$aa^{-1}=(a^{-1})^{-1}a^{-1}aa^{-1}=\overset{\rm (A3)}{=}a^{-1}\diamond(a^{-1}a)=a^{-1}\diamond(a\diamond a)$$$$\overset{\rm (A1)}{=}(aa^{-1})\diamond a\overset{\rm Lemma\, \ref{affine1}\, (1)}{=}aa^{-1}a=a,$$ and so $a\in E(S, +)$.  Thus $E(S,\cdot)=E(S, +)$.

Now let $a\in S$ and $e\in E(S,\cdot)=E(S, +)$. Then $e+a=ea=a+e$ by Lemma \ref{affine1} (1). Therefore $(S,+)$ is a  Clifford semigroup by Lemma \ref{Clifford}, and so $(S,+)$ is an inverse semigroup.
Moreover, we have $$(a^{-1}\diamond a^{-1})+a+(a^{-1}\diamond a^{-1})\overset{\rm (\ref{affine3})}{=}(a^{-1}\diamond a^{-1})+aa^{-1}$$$$\overset{\rm Lemma\, \ref{affine1}\, (1)}{=}aa^{-1}(a^{-1}\diamond a^{-1})\overset{\rm Lemma\, \ref{affine1}\, (1)}{=}a^{-1}\diamond a^{-1}$$
This together with (\ref{affine4}) gives that $-a=a^{-1}\diamond a^{-1}$. Furthermore,
\begin{equation}\label{affine5}
-a+a=a-a=a+(a^{-1}\diamond a^{-1})=aa^{-1}
\end{equation}
by Lemma \ref{jichu} and (\ref{affine3}).

Let $a,b,c\in S$. Observe that $a(b+c)=ab(b\diamond c)$ and
\begin{equation}\label{affine6}
\begin{aligned}
&(ab)+a(a^{-1}+c)=ab((ab)\diamond (aa^{-1}(a^{-1}\diamond c)))\overset{\rm Lemma\, \ref{affine1}\, (1)}{=}ab((ab)\diamond (a^{-1}\diamond c))\\
&\overset{\rm (A1)}{=}
ab((a^{-1}ab)\diamond c)\overset{\rm (A1)}{=}ab(b\diamond (a^{-1}a\diamond c))\overset{\rm (A3)}{=}ab(b\diamond (a^{-1}a c)).
\end{aligned}
\end{equation}
This gives that
\begin{equation}\label{affine7}
\begin{aligned}
&b^{-1}a^{-1}ab(b\diamond c)\overset{\rm (A3)}{=}(b^{-1}a^{-1}ab)\diamond(b\diamond c)\overset{\rm (A1)}{=}(bb^{-1}a^{-1}ab)\diamond c\\
&=(a^{-1}abb^{-1}b)\diamond c=(a^{-1}ab)\diamond c\overset{\rm (A1)}{=}b\diamond (a^{-1}a\diamond c)\overset{\rm (A3)}{=}b\diamond (a^{-1}a c).
\end{aligned}
\end{equation}
Substituting $c$ by $aa^{-1}c$ in (\ref{affine7}), we have
\begin{equation}\label{affine8}
b^{-1}a^{-1}ab(b\diamond a^{-1}ac)=b\diamond (a^{-1}a a^{-1}ac)=b\diamond (a^{-1}a c)=b^{-1}a^{-1}ab(b\diamond c).
\end{equation}
Multiplying $ab$ from the left on both sides in (\ref{affine8}), we have
\begin{equation}\label{affine9}
a(b+c)= ab b^{-1}a^{-1}ab(b\diamond c)=abb^{-1}a^{-1}ab(b\diamond a^{-1}ac)\overset{\rm (\ref{affine6})}=ab+a(a^{-1}+c).
\end{equation}
Substituting $b$ by
$a^{-1}a$ in (\ref{affine6}), we have
\begin{equation*}
\begin{aligned}
a+a(a^{-1}+c)=aa^{-1}a+a(a^{-1}+c)
=aa^{-1}a (a^{-1}a\diamond (a^{-1}a c))\overset{\rm (A3)}{=}
aa^{-1}a a^{-1}aa^{-1}a c=ac,
\end{aligned}
\end{equation*}
$$-a+ac=-a+a+a(a^{-1}+c)\overset{\rm(\ref{affine5})}{=}aa^{-1}+a(a^{-1}+c)\overset{\rm Lemma\, \ref{affine1}\, (1)}{=}aa^{-1}a(a^{-1}+c)=a(a^{-1}+c)$$
This together with (\ref{affine5}) and (\ref{affine9}) gives that $a(b+c)=ab-a+ac$ and $-a+a=aa^{-1}$.
Since $(S,+)$ is an inverse semigroup, we obtain that $(S, +, \cdot)$ forms a weak left brace.  The remaining part is obvious.
\end{proof}
\begin{prop}\label{affine10}
Let $(S, +, \cdot)$ be a dual weak left brace. Define $$a \diamond b=\lambda_{a^{-1}}(b)=-a^{-1}+a^{-1}b=a^{-1}(a+b) \mbox { for all } a,b\in S.$$  Then $\diamond$ is an
affine stricture on  the inverse semigroup $(S, \cdot)$ and we denote $\diamond={\mathcal A}(S)$.
\end{prop}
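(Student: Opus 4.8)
The plan is to verify directly that $\diamond$ satisfies the three defining axioms (A1), (A2) and (A3) of an affine structure, exploiting that for a dual weak left brace both $(S,+)$ and $(S,\cdot)$ are Clifford semigroups, that each $\lambda_{a^{-1}}$ lies in $\End(S,+)$, and that $\lambda\colon(S,\cdot)\to\End(S,+)$ is a semigroup homomorphism with $\lambda_{ab}=\lambda_a\lambda_b$ and $ab=a+\lambda_a(b)$, by Lemmas \ref{wang5} and \ref{wangkangcccc}. First I would record the auxiliary identity $a^{-1}a=aa^{-1}=a^0$ and check that the three displayed expressions for $a\diamond b$ genuinely agree; the only nontrivial point is $a^{-1}(a+b)=-a^{-1}+a^{-1}b$, which follows from $a^0+(-a^{-1})=-a^{-1}$ since $-a^{-1}\in H_a$ has identity $a^0$.

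For (A1) I would use that $(S,\cdot)$ is inverse, so $(ab)^{-1}=b^{-1}a^{-1}$, together with the homomorphism property of $\lambda$, giving $(ab)\diamond c=\lambda_{b^{-1}a^{-1}}(c)=\lambda_{b^{-1}}\bigl(\lambda_{a^{-1}}(c)\bigr)=b\diamond(a\diamond c)$, which is immediate. For (A3) the first identity is a short computation, $e\diamond a=\lambda_{e^{-1}}(a)=\lambda_e(a)=-e+ea=e+e+a=ea$, using $e\cdot a=e+a$ from Lemma \ref{wang5}. The second identity $a\diamond e=a^{-1}ea$ is handled cleanly using the dual hypothesis: since $(S,\cdot)$ is Clifford its idempotents are central, so $a^{-1}ea=a^{-1}ae=(a^{-1}a)e=a^0e=a^0+e$, and on the other hand $a\diamond e=\lambda_{a^{-1}}(e)=-a^{-1}+a^{-1}e=-a^{-1}+a^{-1}+e=a^0+e$.

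The crux is (A2). Rather than expand it directly, I would invoke Lemma \ref{affine1}(3), which states that (A2) is literally the assertion $a\diamond(b\oplus c)=(a\diamond b)\oplus(a\diamond c)$, where $\oplus$ denotes the addition $x\oplus y=x(x\diamond y)$ built from $\diamond$. The key reduction is to show that this induced addition $\oplus$ coincides with the given addition $+$: using $ab=a+\lambda_a(b)$ together with $\lambda_x\lambda_{x^{-1}}=\lambda_{x^0}$ one computes $x\oplus y=x\cdot\lambda_{x^{-1}}(y)=x+\lambda_{x^0}(y)=x+x^0+y=x+y$, where $\lambda_{x^0}(y)=x^0+y$ because $x^0\in E(S)$. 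Once $\oplus=+$ is established, (A2) becomes $\lambda_{a^{-1}}(b+c)=\lambda_{a^{-1}}(b)+\lambda_{a^{-1}}(c)$, which holds simply because $\lambda_{a^{-1}}\in\End(S,+)$ is additive. I expect this identification $\oplus=+$ to be the main (though still short) obstacle, since it is precisely what converts the distributive axiom (A2) into the mere additivity of a single endomorphism; the remaining axioms follow routinely from the homomorphism property of $\lambda$ and the centrality of idempotents in the Clifford semigroup $(S,\cdot)$.
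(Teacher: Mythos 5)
Your proposal is correct and follows essentially the same path as the paper: (A1) from the homomorphism property of $\lambda$ together with $(ab)^{-1}=b^{-1}a^{-1}$, (A3) by direct computation, and the key identification of the induced addition $x(x\diamond y)$ with the given $+$, which is exactly the paper's equation (\ref{wangg}). The only immaterial difference is in how (A2) is then concluded: you invoke Lemma \ref{affine1}(3) and the additivity of $\lambda_{a^{-1}}\in\End(S,+)$, while the paper invokes Lemma \ref{affine1}(2) and the associativity of $(S,+)$; both are one-line finishes once the identification of the two additions is in hand.
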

\begin{proof}We first prove (A1) and (A3). Let $a,b,c\in S$ and $e\in E(S, \cdot)=E(S, +)$. Then by Lemma \ref{wangkangcccc}, $\lambda$ is a homomorphism, and so    $$(ab)\diamond c=\lambda_{(ab)^{-1}}(c)=\lambda_{b^{-1}a^{-1}}(c)=\lambda_{b^{-1}}\lambda_{a^{-1}}(c)=\lambda_{b^{-1}}(a\diamond c)=b\diamond (a\diamond c).$$
By Lemma \ref{wang5} we have $e\diamond a=e^{-1}(e+a)=e(ea)=ea$ and $a\diamond e=a^{-1}(a+e)=a^{-1}ea$.
Observe that
\begin{equation}\label{wangg}
x+'y=x(x\diamond y)=xx^{-1}(x+y)=xx^{-1}+x+y=-x+x+x+y=x-x+x+y=x+y
\end{equation}
 by Lemma \ref{wang5} and (\ref{dengshi}), (A2) follows from Lemma \ref{affine1} (2) and the fact that $(S,+)$ is an inverse semigroup by hypothesis.
\end{proof}
The following result is an analogue of \cite[Theorem 2.1]{Rump3} and \cite[Corollary 12]{Stefanelli1}.
\begin{theorem}\label{main11}Let $(S,\cdot)$ be an inverse semigroup. Denote
$$\mathbb{WB}^\circ=\{(S,+,\cdot)\mid (S,+,\cdot) \mbox{ forms a weak left brace}\},$$
$$\mathbb{AS}=\{\diamond \mid \diamond \mbox{ is an affine  structure on }(S,\cdot)\}$$
Then
${\mathcal A}: \mathbb{WB}^\circ \rightarrow \mathbb{AS},\, (S,+,\cdot)\mapsto {\mathcal A}(S) \mbox{ and }{\mathcal B}:\mathbb{AS}\rightarrow\mathbb{WB}^\circ,\,\gamma\mapsto  {\mathcal B}(\diamond)$ are mutually inverse bijections.
\end{theorem}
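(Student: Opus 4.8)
The plan is to establish the standard four facts for a pair of mutually inverse bijections: that $\mathcal{A}$ maps into $\mathbb{AS}$, that $\mathcal{B}$ maps into $\mathbb{WB}^\circ$, and that the composites $\mathcal{A}\mathcal{B}$ and $\mathcal{B}\mathcal{A}$ are the identities on $\mathbb{AS}$ and $\mathbb{WB}^\circ$, respectively. Well-definedness of $\mathcal{B}$ is exactly Proposition \ref{affine2}. For $\mathcal{A}$ there is a small gap to close, since Proposition \ref{affine10} is phrased only for \emph{dual} weak left braces while $\mathbb{WB}^\circ$ consists of all weak left braces with the fixed multiplication $(S,\cdot)$. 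I would first point out that the proof of Proposition \ref{affine10} never invokes the Clifford hypothesis on $(S,\cdot)$: (A1) and (A3) use only that $\lambda$ is a homomorphism and that $ea=e+a=a+e$ for $e\in E(S)$ (Lemmas \ref{wangkangcccc} and \ref{wang5}), and (A2) follows from Lemma \ref{affine1}(2) together with the identity (\ref{wangg}), which shows that the addition reconstructed from $\diamond$ coincides with the associative operation $+$. Hence $\mathcal{A}(S)\in\mathbb{AS}$ for every $(S,+,\cdot)\in\mathbb{WB}^\circ$, and $\mathcal{A}$ is well defined.

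To prove $\mathcal{A}\mathcal{B}=\mathrm{id}_{\mathbb{AS}}$, I would take $\diamond\in\mathbb{AS}$, put $(S,+,\cdot)=\mathcal{B}(\diamond)$ with $x+y=x(x\diamond y)$ as in Proposition \ref{affine2}, and then compute $\mathcal{A}(S)=\diamond'$, where $a\diamond' b=a^{-1}(a+b)$. Since $\diamond$ satisfies (A1) and (A3), Lemma \ref{affine1}(1) applied to this same $+$ gives $a\diamond b=a^{-1}(a+b)=a\diamond' b$ for all $a,b$, so $\diamond'=\diamond$.

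To prove $\mathcal{B}\mathcal{A}=\mathrm{id}_{\mathbb{WB}^\circ}$, I would take $(S,+,\cdot)\in\mathbb{WB}^\circ$, form $\diamond=\mathcal{A}(S)$ with $a\diamond b=a^{-1}(a+b)$, and reconstruct the addition $+'$ of $\mathcal{B}(\diamond)$ via $x+'y=x(x\diamond y)$. The multiplication is unchanged by construction, so only $+'=+$ needs checking, and this is precisely the computation $x(x\diamond y)=xx^{-1}(x+y)=xx^{-1}+x+y=-x+x+x+y=x+y$ recorded as (\ref{wangg}), which relies only on $ea=e+a$, the axiom $xx^{-1}=-x+x$, and the Clifford/von Neumann identities in $(S,+)$. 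Therefore $\mathcal{B}(\mathcal{A}(S))=(S,+,\cdot)$.

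I expect the one genuine obstacle to be the very first step, i.e.\ confirming that $\mathcal{A}$ is defined on all of $\mathbb{WB}^\circ$ and not merely on dual weak left braces; the two composite identities then drop out immediately from Lemma \ref{affine1}(1) and equation (\ref{wangg}). Finally, since being dual is a property of $(S,\cdot)$ alone, for a fixed $(S,\cdot)$ either every element of $\mathbb{WB}^\circ$ is a dual weak left brace or none is, so the displayed bijection automatically restricts, via Propositions \ref{affine2}(1) and \ref{affine10}, to a bijection between dual weak left braces and affine structures when $(S,\cdot)$ is a Clifford semigroup.
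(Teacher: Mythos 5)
Your proof is correct and follows essentially the same route as the paper's: well-definedness of $\mathcal{B}$ and $\mathcal{A}$ via Propositions \ref{affine2} and \ref{affine10}, ${\mathcal B}{\mathcal A}={\rm id}_{\mathbb{WB}^\circ}$ via equation (\ref{wangg}), and ${\mathcal A}{\mathcal B}={\rm id}_{\mathbb{AS}}$ via Lemma \ref{affine1}(1). Your one addition --- verifying that the proof of Proposition \ref{affine10} nowhere uses the Clifford hypothesis on $(S,\cdot)$, so that $\mathcal{A}$ is genuinely defined on all of $\mathbb{WB}^\circ$ and not merely on dual weak left braces --- repairs an imprecision that the paper itself passes over silently when it cites that proposition for well-definedness, and is worth retaining.
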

\begin{proof}
By Propositions \ref{affine2} and  \ref{affine10}, the above ${\mathcal A}$ and ${\mathcal B}$ are well-defined.  Assume that $(S,+,\cdot)\in \mathbb{WB}^\circ$, ${\mathcal{A}}(S)=\diamond$ and ${\mathcal{BA}}(S)=(S, +', \cdot)$. Then by (\ref{wangg}), we have $+=+'$, and so ${\mathcal{BA}}={\rm id}_{\mathbb{WB}^\circ}$.
Conversely, assume that $\diamond\in \mathbb{AS}, {\mathcal B}(\diamond)=(S, +,\cdot)$ and ${\mathcal AB}(\diamond)=\diamond'$. Then for all $a,b\in S$, we have
$a\diamond' b=a^{-1}(a+b)=a^{-1}a(a\diamond b)=a\diamond b$ by  Propositions \ref{affine2} and  \ref{affine10} and Lemma \ref{affine1} (1).  This implies that  $\diamond=\diamond'$,  and so
${\mathcal AB}={\rm id}_{\mathbb{AS}}$. Thus, the desired results  hold.
\end{proof}
Theorem \ref{main11} can be illustrated by the following example.
\begin{exam}Consider the Brandt semigroup $S=\{0,e,f,a,b\}$ (see \cite{A10}, page 32). This is an inverse semigroup whose multiplication table the first one in Table 2.
\begin{table}[H]
\begin{tabular}{r|rrrrr}
$\cdot$ & $0$ & $e$ & $f$ & $a$&$b$\\
\hline
  $0$& $0$ & $0$ & $0$ & $0$&$0$\\
    $e$ &$0$ & $e$ & $0$ & $a$&$0$ \\
    $f$ &$0$ & $0$ & $f$ & $0$&$b$\\
    $a$ & $0$& $0$ & $a$ & $0$&$e$\\
    $b$ &$0$ & $b$ & $0$ & $f$&$0$
\end{tabular}
\hspace{2mm}
\begin{tabular}{r|rrrrr}
$\diamond$ & $0$ & $e$ & $f$ & $a$&$b$\\
\hline
  $0$& $0$ & $0$ & $0$ & $0$&$0$\\
    $e$ &$0$ & $e$ & $0$ & $a$&$0$ \\
    $f$ &$0$ & $0$ & $f$ & $0$&$b$\\
    $a$ & $0$& $f$ & $0$ & $?$&$?$\\
    $b$ &$0$ & $0$ & $e$ & $?$&$?$
\end{tabular}
\hspace{2mm}
\begin{tabular}{r|rrrrr}
$+$ & $0$ & $e$ & $f$ & $a$&$b$\\
\hline
  $0$& $0$ & $0$ & $0$ & $0$&$0$\\
    $e$ &$0$ & $e$ & $0$ & $a$&$0$ \\
    $f$ &$0$ & $0$ & $f$ & $0$&$b$\\
    $a$ & $0$& $a$ & $0$ & $?$&$?$\\
    $b$ &$0$ & $0$ & $b$ & $?$&$?$
\end{tabular}
\vspace{2mm}
 \caption{}
\end{table}
Then we have $a^{-1}=b, b^{-1}=a, 0^{-1}=0, e^{-1}=e, f^{-1}=f$ and $E(S,\cdot)=\{0,e,f\}$. We shall try to determine all affine structures on $(S, +)$.
According to Proposition \ref{affine2}, we have the second and the third tables in Table 2. By Lemma \ref{affine1} (3), we have the axiom $x\diamond(y+z)=(x\diamond y)+(x\diamond z)$. Moreover, by the proof of Proposition \ref{affine2}, $(S,+)$ is a Clifford semigroup and $E(S,+)=\{0,e,f\}$.
Since $a\diamond(f+b)=(a\diamond f)+(a\diamond b)=0+(a\diamond b)=0$, we have $a+b=a(a\diamond b)=0$.  Dually, we have $b\diamond a=0$ and $b+a=0$.
Observe that  $a\diamond a=a\diamond(e+a)=(a\diamond e)+(a\diamond a)=f+(a\diamond a)$. This implies that $a\diamond a\in \{0,f, a\}$ by Table 1. If $a\diamond a=0$, then $a+a=a(a\diamond a)=0$, and so $a=a-a+a=-a+a+a=-a+0=0$ by Lemma \ref{jichu}, a contradiction. If
$a\diamond a=f$,  then $a+a=a(a\diamond a)=af=a$, and so $a\in E(S, +)$, a contradiction. If
$a\diamond a=a$,  then $a+a=a(a\diamond a)=aa=e$. Dually, we have $b\diamond b=b$ and $b+b=f$. In this case, we have
\begin{table}[H]
\begin{tabular}{r|rrrrr}
$\cdot$ & $0$ & $e$ & $f$ & $a$&$b$\\
\hline
  $0$& $0$ & $0$ & $0$ & $0$&$0$\\
    $e$ &$0$ & $e$ & $0$ & $a$&$0$ \\
    $f$ &$0$ & $0$ & $f$ & $0$&$b$\\
    $a$ & $0$& $0$ & $a$ & $0$&$e$\\
    $b$ &$0$ & $b$ & $0$ & $f$&$0$
\end{tabular}
\hspace{2mm}
\begin{tabular}{r|rrrrr}
$\diamond_1$ & $0$ & $e$ & $f$ & $a$&$b$\\
\hline
  $0$& $0$ & $0$ & $0$ & $0$&$0$\\
    $e$ &$0$ & $e$ & $0$ & $a$&$0$ \\
    $f$ &$0$ & $0$ & $f$ & $0$&$b$\\
    $a$ & $0$& $f$ & $0$ & $a$&$0$\\
    $b$ &$0$ & $0$ & $e$ & $0$&$b$
\end{tabular}
\hspace{2mm}
\begin{tabular}{r|rrrrr}
$+_1$ & $0$ & $e$ & $f$ & $a$&$b$\\
\hline
  $0$& $0$ & $0$ & $0$ & $0$&$0$\\
    $e$ &$0$ & $e$ & $0$ & $a$&$0$ \\
    $f$ &$0$ & $0$ & $f$ & $0$&$b$\\
    $a$ & $0$& $a$ & $0$ & $e$&$0$\\
    $b$ &$0$ & $0$ & $b$ & $0$&$f$
\end{tabular}
\vspace{2mm}
 \caption{}
\end{table}
\noindent One can routinely check that (A1), (A2) and (A3) hold for $\diamond_1$. So $\diamond_1$ is an affine structure on $(S,\cdot)$  and we obtain the unique weak left brace $(S, +_1, \cdot)$ whose multiplication is the given Brandt semigroup $(S, \cdot)$, where $+_1$ is given in the third table in Table 3.

In conclusion, for the given Brandt semigroup $(S, \cdot)$, we have
$$\mathbb{WB}^\circ=\{(S,+,\cdot)\mid (S,+,\cdot) \mbox{ forms a weak left brace}\}=\{(S,+_1,\cdot)\},$$
$$\mathbb{AS}=\{\diamond \mid \diamond \mbox{ is an affine  structure on }(S,\cdot)\}=\{\diamond_1\}.$$
\end{exam}

\section{Symmetric, $\lambda$-homomorphic and $\lambda$-anti-homomorphic weak left braces}
Inspired by symmetric, $\lambda$-homomorphic and $\lambda$-anti-homomorphic skew left braces appeared in literature, in this section we introduce symmetric, $\lambda$-homomorphic and $\lambda$-anti-homomorphic weak left braces and consider their characterizations and  constructions.

Symmetric  skew left braces were firstly introduced in the paper of Childs  \cite{Childs} under the name bi-skew brace and studied by Caranti \cite{Caranti}, and the name of ``symmetric  skew left braces" was firstly formulated in \cite{Bardakov1}.
The following notion can be regarded as a generalization of symmetric  skew left braces.
\begin{defn}
Let $(S, +)$ and $(S, \cdot)$ be two inverse semigroups. Then $(S, +, \cdot)$  is called  a {\em symmetric weak left brace} if the following axioms hold:
\begin{equation}\label{symmetric}
\begin{aligned}
x(y+z)=xy-x+xz,\\
x+(yz)=(x+y)x^{-1}(x+z).
\end{aligned}
\end{equation}
From \cite{Bardakov1,Childs}, a symmetric weak left brace $(S, +, \cdot)$ is called {\em a symmetric  skew left brace} if both $(S, +)$ and $(S, \cdot)$  are groups.
\end{defn}
We first point out that symmetric weak left braces are special dual weak left braces.
\begin{prop}\label{duichen}
Let $(S, +, \cdot)$  be a symmetric weak left brace. Then $-x+x=xx^{-1}$ and $x-x=x^{-1}x$ for all $x\in S$.  As a consequence, $(S, +, \cdot)$  is a dual weak left brace.
\end{prop}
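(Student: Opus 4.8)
The plan is to reduce the whole statement to the single identity $xx^{-1}=-x+x$ and then exploit a symmetry of the defining axioms. The crucial observation is that the two axioms in (\ref{symmetric}) are interchanged when one swaps the two operations: reading $+$ as a multiplication and $\cdot$ as an addition (so that the multiplicative inverse of $x$ becomes $-x$ and the additive inverse becomes $x^{-1}$), the axiom $x(y+z)=xy-x+xz$ turns into $x+yz=(x+y)x^{-1}(x+z)$ and conversely. Hence $(S,\cdot,+)$ is again a symmetric weak left brace, and any identity established for an arbitrary symmetric weak left brace is automatically valid for $(S,\cdot,+)$, its content read off by the formal substitution $+\leftrightarrow\cdot$, $-x\leftrightarrow x^{-1}$. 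Applying $xx^{-1}=-x+x$ to $(S,\cdot,+)$ produces precisely $x-x=x^{-1}x$, so it suffices to prove the first identity.

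Granting $xx^{-1}=-x+x$, the remaining assertions follow at once. Together with the hypothesis $x(y+z)=xy-x+xz$, the identity $xx^{-1}=-x+x$ is exactly the pair of axioms (\ref{dengshi}), so $(S,+,\cdot)$ is a weak left brace; by Lemma \ref{wang5}, $(S,+)$ is then a Clifford semigroup and $-x+x=x-x$ for all $x$. Combining this with the two identities gives $xx^{-1}=-x+x=x-x=x^{-1}x$, so $(S,\cdot)$ satisfies $xx^{-1}=x^{-1}x$ and is Clifford by Lemma \ref{Clifford}. Thus $(S,+,\cdot)$ is a dual weak left brace.

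To prove $xx^{-1}=-x+x$ I would first show that the two sets of idempotents coincide, $E(S,+)=E(S,\cdot)$. Let $e\in E(S,\cdot)$, so $e^{-1}=e$, and put $q=e+e$. The first axiom applied to $e(e+e)$ gives $eq=ee-e+ee=e-e+e=e$, while the second axiom with $x=y=z=e$ gives $q=qeq$; hence $q=q(eq)=qe$ and then $q=(qe)q=q^2$, so that $q\in E(S,\cdot)$. Since idempotents commute in the inverse semigroup $(S,\cdot)$ we obtain $e=eq=qe=q$, that is, $e+e=e$ and $e\in E(S,+)$; the reverse inclusion holds by the self-duality above, whence $E(S,+)=E(S,\cdot)=:E$. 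One also checks $e+f=ef$ for $e,f\in E$, so that $+$ and $\cdot$ induce a single semilattice order on $E$; in particular $xx^{-1}$ and $-x+x$ both lie in $E$.

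It then remains to identify these two idempotents, and this is the main obstacle. The plan is to feed the inverse-semigroup factorisations $x=x(x^{-1}x)=(xx^{-1})x$ into the second axiom and expand the products by the first: for instance $y=x,\,z=x^{-1}$ give $x+xx^{-1}=(x+x)x^{-1}(x+x^{-1})$, the choices $y=xx^{-1},\,z=x$ and $y=x,\,z=x^{-1}x$ give $x+x=(x+xx^{-1})x^{-1}(x+x)=(x+x)x^{-1}(x+x^{-1}x)$, and the first axiom yields $x(x^{-1}+x^{-1}x)=xx^{-1}-x+x$. The aim is to combine such relations, using the commutativity of idempotents and the laws $x-x+x=x$ and $x+(-x+x)=x$ of $(S,+)$, to show both that $xx^{-1}$ is a right additive identity of $x$ and that $xx^{-1}\le -x+x$ in the order on $E$; since $-x+x$ is the least right additive identity of $x$, these two comparabilities force $xx^{-1}=-x+x$. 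The delicate point throughout is that the two axioms entangle the additive and multiplicative operations and a priori relate neither $-x$ to $x^{-1}$ nor the two natural partial orders, so that extracting these comparabilities cleanly is where the real work lies.
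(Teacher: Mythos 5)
Your reductions are sound as far as they go: the two axioms in (\ref{symmetric}) are indeed interchanged by swapping $+$ and $\cdot$ (with $-x\leftrightarrow x^{-1}$), so $(S,\cdot,+)$ is again a symmetric weak left brace and $x-x=x^{-1}x$ follows formally once $xx^{-1}=-x+x$ is known for every symmetric weak left brace; the paper itself exploits this symmetry when it proves only one of the two identities and settles the other ``dually''. Your proof that $E(S,\cdot)\subseteq E(S,+)$ (setting $q=e+e$, getting $eq=e$ from the first axiom and $q=qeq$ from the second, then using commutativity of idempotents in $(S,\cdot)$) is correct and is a legitimate variant of the paper's computation, which runs in the opposite direction, starting from $e\in E(S,+)$ and invoking uniqueness of additive inverses.

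However, the proposal stops exactly where the content of Proposition \ref{duichen} lies: the identity $xx^{-1}=-x+x$ is never proved. Your final paragraph lists four correct instances of the axioms and then describes an ``aim'' to combine them into the two comparabilities $x+xx^{-1}=x$ and $xx^{-1}\le -x+x$, conceding that ``extracting these comparabilities cleanly is where the real work lies''. That is a plan, not a proof; nothing in the listed relations visibly yields either comparability, and the second one is essentially equivalent to the statement being proved. The paper closes this gap with one genuinely new mixed identity: computing $x(y+x^{-1}x)$ in two ways gives $xy=xy-x+x$ for all $x,y$ (equation (\ref{symmetric1})), from which it deduces $x=x+xx^{-1}$, then $x(-x^{-1}+x^{-1})=x$, hence $x^{-1}(-x+x)=x^{-1}$ and $x=(-x+x)x$, and finally $-x+x=(-x+x)+xx^{-1}=(-x+x)(xx^{-1})=xx^{-1}$. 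Note that the two facts $x=x+xx^{-1}$ and $x=(-x+x)x$ are precisely the two comparabilities your framework asks for (the first gives $-x+x\le xx^{-1}$, the second gives $xx^{-1}=(-x+x)(xx^{-1})$, i.e.\ $xx^{-1}\le -x+x$), so your scheme would work, but only after an identity of the strength of (\ref{symmetric1}) has actually been derived --- and that derivation is the missing core of your argument.
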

\begin{proof} We first prove that $E(S,+)=E(S, \cdot)$. Let $e\in E(S,+)$. Then $e=-e=e+e$. By (\ref{symmetric}), we have
$$ee=e(e+e)=ee-e+ee=ee+e+ee,\,\, e+ee=(e+e)e^{-1}(e+e)=ee^{-1}e=e,$$  whence
$ee+e+ee=ee$ and $e+ee+e=e+e=e$. Since $(S, +)$ is an inverse semigroup, we have $ee=-e=e\in E(S, \cdot)$. Thus $E(S,+)\subseteq E(S, \cdot)$. Dually,  $E(S, \cdot)\subseteq E(S, +)$.

Next, we  show that $ef=e+f$ for all $e,f\in E(S,+)= E(S, \cdot)$. In fact, by (\ref{symmetric}) and the fact that $(S, +)$ and $(S, \cdot)$  are inverse semigroups, by Lemma \ref{inverse} we have
$$ef=e(f+f)=ef-e+ef=ef+e+ef=e+ef,$$
$$e+f=e+(ff)=(e+f)e^{-1}(e+f)=(e+f)e(e+f)$$$$=e(e+f)=ee-e+ef=ee+e+ef=e+ef.$$
This gives that $ef=e+f$.

Finally, we prove $-x+x=xx^{-1}$ and $x-x=x^{-1}x$ for all $x\in S$.  We only prove the first equation, and the other equation  can be proved dually. Let $x,y\in S$.
Then $x(y+x^{-1}x)=xy-x+xx^{-1}x=xy-x+x$. On the other hand,
$$x(y+x^{-1}x)=x(y-y+y+x^{-1}x)=x(y+x^{-1}x-y+y)$$$$=xy-x+x(x^{-1}x-y+y)=xy-x+xx^{-1}x-x+x(-y+y)$$$$=xy-x+x-x+x(-y+y)=xy-x+x(-y+y)=x(y-y+y)=xy.$$
This gives that
\begin{equation}\label{symmetric1} xy=xy-x+x \mbox{ for all } x,y\in S.
\end{equation}
Moreover, $$-x+x(-x^{-1}+x^{-1})=-x+xx^{-1}x-x+x(-x^{-1}+x^{-1})$$$$=-x+x(x^{-1}x-x^{-1}+x^{-1})\overset{\rm(\ref{symmetric1})}{=} -x+xx^{-1}x=-x+x,$$
$$-x(-x^{-1}+x^{-1})+x=-(-x+x(-x^{-1}+x^{-1}))=-(-x+x)=-x+x.$$
On the other hand, by (\ref{symmetric1}) we have $$x=(xx^{-1})x=(xx^{-1})x-xx^{-1}+xx^{-1}=x+xx^{-1}+xx^{-1}=x+xx^{-1}.$$
Again by (\ref{symmetric1}), we obtain
\begin{equation*}
\begin{aligned}
&x(-x^{-1}+x^{-1})=x(-x^{-1}+x^{-1})-x+x\\
&=x(-x^{-1}+x^{-1})-x(-x^{-1}+x^{-1})+x\\
&=x(-x^{-1}+x^{-1})-x(-x^{-1}+x^{-1})+x-x+x\\
&=x-x+x(-x^{-1}+x^{-1})-x(-x^{-1}+x^{-1})+x\\
&=x-x+x(-x^{-1}+x^{-1})
=x-x+x=x.
\end{aligned}
\end{equation*}
Substituting $x$ by $x^{-1}$ in the above equation, we have $x^{-1}(-x+x)=x^{-1}$, and so $$x=(x^{-1})^{-1}=(x^{-1}(-x+x))^{-1}=(-x+x)^{-1}x=(-x+x)x.$$
This implies that $$-x+x=-x+(x+xx^{-1})=(-x+x)+(xx^{-1})=(-x+x)(xx^{-1})=xx^{-1},$$ as required. By the above statements, both $(S,+)$ and $(S,\cdot)$ are Clifford semigroups, and hence $(S, +, \cdot)$  is a dual weak left brace.
\end{proof}
In 2022, $\lambda$-homomorphic skew left braces are introduced in \cite{Bardakov1}.  Inspired this notion, we can define $\lambda$-homomorphic weak left braces.
\begin{defn}
A weak left brace $(S, +, \cdot)$ is called {\em $\lambda$-homomorphic} if the map $\lambda$ given in Lemma \ref{wangkangcccc} is a homomorphism from $(S, +)$ to $\End (S,+)$, that is,  for all $x,y,z\in S$,
\begin{equation}\label{lambda-homomorphic}
\lambda_x\lambda_y(z)=-x+x(-y+yz)=-y-x+(x+y)z=\lambda_{x+y}(z)
\end{equation}
From \cite{Bardakov1,Childs}, a $\lambda$-homomorphic weak left brace $(S, +, \cdot)$ is called {\em a $\lambda$-homomorphic  skew left brace} if $(S, +, \cdot)$ is a skew left brace.
\end{defn}
\begin{prop}\label{tongtai}
A $\lambda$-homomorphic  weak left brace $(S, +, \cdot)$ is  necessarily a  dual weak left brace.
\end{prop}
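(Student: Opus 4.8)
The plan is to show that the multiplicative reduct $(S,\cdot)$ is a Clifford semigroup; since $(S,\cdot)$ is already an inverse semigroup and, by the second axiom in (\ref{dengshi}), $xx^{-1}=-x+x=x^0$, while applying the same axiom to $x^{-1}$ gives $x^{-1}x=-x^{-1}+x^{-1}=(x^{-1})^0$, this reduces to proving $(x^{-1})^0=x^0$ for every $x\in S$. Two facts will be used throughout. First, $\lambda$ is simultaneously a homomorphism $(S,\cdot)\rightarrow\End(S,+)$ (Lemma \ref{wangkangcccc}) and, by the $\lambda$-homomorphic hypothesis, a homomorphism $(S,+)\rightarrow\End(S,+)$, so that $\lambda_{a+b}=\lambda_a\lambda_b=\lambda_{ab}$ for all $a,b\in S$; in particular $\lambda_{x^0}=\lambda_{xx^{-1}}=\lambda_x\lambda_{x^{-1}}$ and $\lambda_{(x^{-1})^0}=\lambda_{x^{-1}x}=\lambda_{x^{-1}}\lambda_x$. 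Second, for every idempotent $e$ one has $\lambda_e(z)=-e+ez=e+e+z=e+z$, using $-e=e$ and $ez=e+z$ from Lemma \ref{wang5}.

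The crucial step, and the place where the hypothesis genuinely enters, is the identity $\lambda_x(x^0)=x^0$ for all $x$. To establish it I would start from $x+x^0=x$ (Lemma \ref{wang5}) and apply the additive homomorphism property to get $\lambda_x=\lambda_{x+x^0}=\lambda_x\lambda_{x^0}$. Evaluating both sides at $x^{-1}$, using $\lambda_{x^0}(x^{-1})=x^0+x^{-1}$, the additivity of $\lambda_x$, and $\lambda_x(x^{-1})=-x$ from Lemma \ref{wangkangcccc}(2), yields $-x=\lambda_x(x^0)+(-x)$; adding $x$ on the right gives $\lambda_x(x^0)+x^0=x^0$. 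Since $\lambda_x$ sends idempotents to idempotents, $\lambda_x(x^0)\in E(S,+)$, and since $\lambda$ is a Gamma function (Proposition \ref{good5}) axiom (F1) gives $x^0+\lambda_x(x^0)=\lambda_x(x^0)$. As additive idempotents commute, comparing the two relations forces $\lambda_x(x^0)=x^0$. I expect this to be the main obstacle, as every other identity below is formal and holds in an arbitrary weak left brace.

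The remaining computations are elementary. From $x(x^{-1})^0=x(x^{-1}x)=(xx^{-1})x=x^0x=x$ one gets $\lambda_x((x^{-1})^0)=-x+x=x^0$, and dually from $x^{-1}x^0=x^{-1}(xx^{-1})=(x^{-1}x)x^{-1}=x^{-1}$ one gets $\lambda_{x^{-1}}(x^0)=-x^{-1}+x^{-1}=(x^{-1})^0$; moreover the crucial step applied to $x^{-1}$ gives $\lambda_{x^{-1}}((x^{-1})^0)=(x^{-1})^0$. Combining these with the homomorphism identities of the first paragraph, I compute
\begin{align*}
x^0+(x^{-1})^0 &= \lambda_{x^0}\big((x^{-1})^0\big)=\lambda_x\lambda_{x^{-1}}\big((x^{-1})^0\big)=\lambda_x\big((x^{-1})^0\big)=x^0,\\
(x^{-1})^0+x^0 &= \lambda_{(x^{-1})^0}(x^0)=\lambda_{x^{-1}}\lambda_x(x^0)=\lambda_{x^{-1}}(x^0)=(x^{-1})^0.
\end{align*}
Because the idempotents $x^0$ and $(x^{-1})^0$ commute additively, the two left-hand sides coincide, whence $x^0=(x^{-1})^0$. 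Therefore $xx^{-1}=x^0=(x^{-1})^0=x^{-1}x$ for all $x$, so $(S,\cdot)$ is a Clifford semigroup and $(S,+,\cdot)$ is a dual weak left brace.
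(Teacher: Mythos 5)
Your proof is correct, and it follows a genuinely different route from the one in the paper. The paper argues entirely at the level of elements: from the defining identity (\ref{lambda-homomorphic}) it establishes $x(y+z)=x(-x+x)(y+z)$ for all $x,y,z$ (by evaluating $x(y+x+(-x)(-x+xz))$ in two ways), then specializes to get $x^{-1}=x^{-1}x^{-1}x$, and finally substitutes $x\mapsto x^{-1}$ and uses commutation of the multiplicative idempotents $xx^{-1}$ and $x^{-1}x$ to conclude $xx^{-1}=x^{-1}x$. You instead argue at the level of the operators $\lambda_a$: the hypothesis is packaged as $\lambda_{x+y}=\lambda_x\lambda_y=\lambda_{xy}$, the key new fact is the fixed-point identity $\lambda_x(x^0)=x^0$ (your ``crucial step''), and Cliffordness then follows from two one-line operator computations of $x^0+(x^{-1})^0$ and $(x^{-1})^0+x^0$ together with commutation of additive idempotents. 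I checked each step: $\lambda_x=\lambda_x\lambda_{x^0}$ follows from $x+x^0=x$; evaluating at $x^{-1}$ with $\lambda_{x^0}(x^{-1})=x^0+x^{-1}$ and $\lambda_x(x^{-1})=-x$ gives $\lambda_x(x^0)+x^0=x^0$; combined with $x^0+\lambda_x(x^0)=\lambda_x(x^0)$ (which you may cite from (F1) via Proposition \ref{good5}, or verify directly as $-x+x-x+xy=-x+xy$) and commutativity of $E(S,+)$ this forces $\lambda_x(x^0)=x^0$; the final two displays are then valid instances of Lemmas \ref{wang5} and \ref{wangkangcccc}. Your identification of where the hypothesis genuinely enters is also accurate: in the non-dual weak left brace $(S,+,\circ_2)$ of Example \ref{example1} one has $\lambda_a(a^0)=\lambda_a(e)=0\neq e=a^0$, so the crucial step is exactly what $\lambda$-homomorphicity buys. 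The trade-off between the two arguments: the paper's computation is self-contained, needing only (\ref{dengshi}), (\ref{lambda-homomorphic}) and Lemma \ref{wang5}, while yours is more structural --- it isolates the single point where the hypothesis is used, reduces everything else to facts valid in every weak left brace, and yields the reusable observation that the additive and multiplicative parametrizations of $\lambda$ coincide. Interestingly, both proofs end with the same symmetrization $x\mapsto x^{-1}$ followed by an appeal to commuting idempotents, but in different reducts: multiplicative idempotents in the paper, additive ones in yours.
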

\begin{proof}Let $x,y,z\in S$.  On one hand,
\begin{equation*}
\begin{aligned}
&x(y+x+(-x)(-x+xz)) =xy-x+x(x+(-x)(-x+xz))\,\,\,\,\,    (\mbox{by (\ref{dengshi})})\\
&=xy+x-x+(x-x)(-x+xz)\,\,\,\,\,    (\mbox{by  (\ref{lambda-homomorphic})})\\
&=xy+x-x+x-x-x+xz\,\,\,\,\,    (\mbox{by  Lemma \ref{wang5} and the fact } x-x\in E(S))\\
&=xy-x+xz=x(y+z).\,\,\,\,\,    (\mbox{by (\ref{dengshi})})
\end{aligned}
\end{equation*}
On the other hand,
\begin{equation*}
\begin{aligned}
&x+(-x)(-x+xz)=-x+x+(-x+x)z\,\,\,\,\,    (\mbox{by  (\ref{lambda-homomorphic})})\\
&=-x+x-x+x+z=-x+x+z,\,\,\,\,\,    (\mbox{by  Lemma \ref{wang5}})
\end{aligned}
\end{equation*}
\begin{equation*}
\begin{aligned}
&x(y+x+(-x)(-x+xz)) =x(y+(-x+x)+z)=x((-x+x)+y+z)\\
&=x(-x+x)(y+z). \,\,\,\,\,    (\mbox{by  Lemma \ref{wang5} and the fact } x-x\in E(S))
\end{aligned}
\end{equation*}Thus we have $x(-x+x)(y+z)=x(y+z)$ for all $x,y,z$ in $S$.  This implies that
$$x^{-1}(-x^{-1}+x^{-1})(-x+x)=x^{-1}(-x+x)=x^{-1}xx^{-1}=x^{-1}.$$
On the other hand, $$x^{-1}(-x^{-1}+x^{-1})(-x+x)=x^{-1}(-x+x)(-x^{-1}+x^{-1})=x^{-1}xx^{-1}x^{-1}x=x^{-1}x^{-1}x.$$
So $x^{-1}=x^{-1}x^{-1}x$, whence $xx^{-1}=xx^{-1}x^{-1}x$. Substituting $x$ by $x^{-1}$ in this equation, we obtain $x^{-1}x=x^{-1}xxx^{-1}$.
This gives $xx^{-1}=xx^{-1}x^{-1}x=x^{-1}xxx^{-1}=x^{-1}x$. Hence $(S,\cdot)$ is also a Clifford semigroup and $(S,+, \cdot)$ is a dual weak left brace.
\end{proof}

Recently, $\lambda$-anti-homomorphic skew left braces are introduced in \cite{Bardakov2}. We can extend this notion as follows.

\begin{defn}
A weak left brace $(S, +, \cdot)$ is called {\em $\lambda$-anti-homomorphic} if the map $\lambda$ given in Lemma \ref{wangkangcccc} is an $\lambda$-anti-homomorphism from $(S, +)$ to $\End (S,+)$, that is,  for all $x,y,z\in S$,
\begin{equation}\label{lambda-antihomomorphic1}
\lambda_y\lambda_x(z)=-y+y(-x+xz)=-y-x+(x+y)z=\lambda_{x+y}(z).
\end{equation}
From \cite[Definition 3.1]{Bardakov2}, a $\lambda$-anti-homomorphic weak left brace $(S, +, \cdot)$ is called a {\em $\lambda$-anti-homomorphic skew left brace} if $(S, +, \cdot)$ is a skew left brace.
\end{defn}
\begin{lemma}A weak left brace $(S, +, \cdot)$ is $\lambda$-anti-homomorphic if and only if the following axiom holds:
\begin{equation}\label{lambda-antihomomorphic2}
 y(-x+xz)=-x+(x+y)z.
\end{equation}
\end{lemma}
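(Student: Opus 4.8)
The plan is to unwind the definition and reduce the claimed equivalence to a single cancellation-type step inside the Clifford semigroup $(S,+)$. By (\ref{lambda-antihomomorphic1}), the weak left brace $(S,+,\cdot)$ is $\lambda$-anti-homomorphic precisely when
$$-y+y(-x+xz)=-y-x+(x+y)z$$
holds for all $x,y,z\in S$. Writing $L=y(-x+xz)$ and $R=-x+(x+y)z$ and using $-(x+y)=-y-x$, the right-hand side equals $-y+R$, so this condition is exactly $-y+L=-y+R$, whereas the target axiom (\ref{lambda-antihomomorphic2}) is exactly $L=R$. Thus it suffices to prove that $-y+L=-y+R$ and $L=R$ are equivalent for each fixed $x,y,z$.

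The implication $L=R\Rightarrow -y+L=-y+R$ is immediate by adding $-y$ on the left. For the converse I would like to cancel $-y$, and this is the main obstacle: since $(S,+)$ is only a Clifford semigroup (Lemma \ref{wang5}), the element $-y+y=y^{0}$ is merely an idempotent and there is no cancellation available. Instead I add $y$ on the left of $-y+L=-y+R$ and use associativity of $+$ together with $y-y=y^{0}$ to obtain $y^{0}+L=y^{0}+R$.

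It then remains to \emph{absorb} the idempotent $y^{0}$, that is, to show $y^{0}+L=L$ and $y^{0}+R=R$; combined with the previous identity these force $L=R$. For $L=y(-x+xz)$, the relation $ab=a+\lambda_{a}(b)$ from Lemma \ref{wangkangcccc} gives $L=y+\lambda_{y}(-x+xz)$, whence $y^{0}+L=y^{0}+y+\lambda_{y}(-x+xz)=y+\lambda_{y}(-x+xz)=L$ by $y^{0}+y=y$. For $R$, I first note that $y^{0}$ is central in $(S,+)$ and $y^{0}+y=y$, so $y^{0}+(x+y)=x+(y^{0}+y)=x+y$; applying $ab=a+\lambda_{a}(b)$ to $(x+y)z$ then yields $y^{0}+(x+y)z=(x+y)z$, and moving $y^{0}$ past $-x$ by centrality gives $y^{0}+R=-x+y^{0}+(x+y)z=-x+(x+y)z=R$.

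Finally I would record that the whole argument uses only that $(S,+)$ is Clifford (Lemma \ref{wang5}) and the relation $ab=a+\lambda_{a}(b)$ (Lemma \ref{wangkangcccc}), so no hypothesis on $(S,\cdot)$ is needed and the equivalence holds for every weak left brace. The only delicate point is replacing the unavailable cancellation of $-y$ by the $y^{0}$-absorption step; everything else is a direct substitution.
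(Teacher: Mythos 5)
Your proof is correct and follows essentially the same route as the paper: the forward direction is immediate, and for the converse both you and the paper add $y$ on the left of (\ref{lambda-antihomomorphic1}) and then absorb the resulting idempotent $y-y=y^{0}$ into each side, yielding $L=R$. The only cosmetic difference is the absorption mechanism: the paper uses the mixed identity $e+a=ea$ from Lemma \ref{wang5} (rewriting $yy^{-1}+y(-x+xz)=yy^{-1}y(-x+xz)$ and similarly for the right-hand side), whereas you use the decomposition $ab=a+\lambda_{a}(b)$ from Lemma \ref{wangkangcccc} together with $y^{0}+y=y$ and centrality of idempotents; both rest on the same lemmas and are equally routine.
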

\begin{proof}Obviously, (\ref{lambda-antihomomorphic2}) implies  (\ref{lambda-antihomomorphic1}). To show the converse, we first observe that
$$y+(-y+y(-x+xz))=yy^{-1}+y(-x+xz)=yy^{-1}y(-x+xz)=y(-x+xz)$$ by Lemma \ref{wang5}.
Moreover, by Lemma \ref{wang5} again, we have $$y+(-y-x+(x+y)z)=yy^{-1}-x+(x+y)z=-x+yy^{-1}+(x+y)z
=-x+yy^{-1}(x+y)z$$$$=-x+(yy^{-1}+(x+y))z=-x+(x+yy^{-1}+y)z=-x+(x+yy^{-1}y)z=-x+(x+y)z.$$
Thus (\ref{lambda-antihomomorphic1}) also implies  (\ref{lambda-antihomomorphic2}).
\end{proof}
\begin{prop}\label{fantongtai}
A $\lambda$-anti-homomorphic  weak left brace $(S, +, \cdot)$ is  necessarily a dual weak left brace.
\end{prop}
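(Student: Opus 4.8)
The plan is to show that $(S,\cdot)$ is a Clifford semigroup, since by definition this is exactly what promotes the weak left brace $(S,+,\cdot)$ to a dual weak left brace. As $(S,\cdot)$ is already an inverse semigroup, Lemma \ref{Clifford} reduces the task to proving $xx^{-1}=x^{-1}x$ for every $x\in S$. A convenient reformulation comes from Lemma \ref{wangkangcccc}(2): applying $\lambda_a(a^{-1})=-a$ with $a=x^{-1}$ gives $\lambda_{x^{-1}}(x)=-x^{-1}$, so that $x^{-1}x=x^{-1}+\lambda_{x^{-1}}(x)=x^{-1}-x^{-1}=(x^{-1})^{0}$, while the defining axiom (\ref{dengshi}) gives $xx^{-1}=-x+x=x^{0}$. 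Hence the whole statement is equivalent to the single identity $x^{0}=(x^{-1})^{0}$ for all $x$, i.e. the two idempotents $x-x$ and $x^{-1}-x^{-1}$ coincide.

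First I would record, for the later cancellations, the ambient facts available in any weak left brace from Lemma \ref{wang5}: $(S,+)$ is Clifford, $E(S,+)=E(S,\cdot)=E(S)$ is central in $(S,+)$, and $ea=e+a=a+e$ for every idempotent $e$. The crucial asymmetry to keep in mind is that the dual identity $ae=a+e$ is \emph{not} yet available; breaking this left/right asymmetry is precisely the content that must be extracted from the anti-homomorphic hypothesis.

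The main step would mirror the proof of Proposition \ref{tongtai}. I would form an auxiliary product built from $x,y,z$ (the analogue of $x(y+x+(-x)(-x+xz))$) and evaluate it in two ways: once using only the left distributive law (\ref{dengshi}) together with Lemma \ref{wang5}, and once feeding in the $\lambda$-anti-homomorphic law in the form (\ref{lambda-antihomomorphic1}) or (\ref{lambda-antihomomorphic2}), where the reversed composition order is what makes the two evaluations differ. Equating the two outcomes should collapse to an engine identity of the shape $(\,\cdot\,)(y+z)=x(y+z)$, valid for all $y,z$. I would then specialise: replace $x$ by $x^{-1}$, take the free sum $y+z$ to be the idempotent $-x+x=xx^{-1}$, and use that the idempotents $x-x$ and $x^{-1}-x^{-1}$ commute in $(S,\cdot)$ (they lie in $E(S)$) to reduce the identity to a relation such as $x^{-1}=x^{-1}x^{-1}x$. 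Multiplying on the left by $x$ and substituting $x\mapsto x^{-1}$ should then yield $xx^{-1}=xx^{-1}x^{-1}x=x^{-1}xxx^{-1}=x^{-1}x$, which is the required Clifford condition.

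The hard part will be the concrete bookkeeping in the two-way evaluation, not any abstract principle. Indeed, the purely formal observation that $\lambda$ is simultaneously a homomorphism $(S,\cdot)\to\End(S,+)$ (Lemma \ref{wangkangcccc}) and, by hypothesis, an anti-homomorphism $(S,+)\to\End(S,+)$ only yields $\lambda_{x^{-1}x}=\lambda_{x+x^{-1}}$ and $\lambda_{xx^{-1}}=\lambda_{x^{-1}+x}$, and chasing these relations merely restates $xx^{-1}=x^{-1}x$ in a circle. The genuine content must therefore come from computing with $+$ and $\cdot$ explicitly, carefully exploiting the one-sided idempotent rule $ea=e+a=a+e$ and the inverse-semigroup identities of Lemma \ref{wang5} to force the left/right symmetry that is otherwise absent.
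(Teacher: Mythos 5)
Your skeleton is sound: the reduction to $xx^{-1}=x^{-1}x$ is correct, and your endgame (specialise an engine identity at $x^{-1}$ and the idempotent $-x+x$, extract $x^{-1}=x^{-1}x^{-1}x$, then conclude $xx^{-1}=xx^{-1}x^{-1}x=x^{-1}xxx^{-1}=x^{-1}x$ using that idempotents of $(S,\cdot)$ commute) is exactly the closing argument of Proposition \ref{tongtai} and is valid. But the proposal has a hole precisely where the content of the proposition lies: you never derive the engine identity $x(-x+x)(y+z)=x(y+z)$ from the anti-homomorphic hypothesis. You assert that mirroring the two-way evaluation of $x(y+x+(-x)(-x+xz))$ ``should collapse'' to it, and you explicitly defer the bookkeeping as ``the hard part''. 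As submitted this is a plan, not a proof: the one step that actually uses (\ref{lambda-antihomomorphic1}) or (\ref{lambda-antihomomorphic2}) is missing, so nothing has yet been proved about $\lambda$-anti-homomorphic braces specifically.

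The gap is fillable, and for a reason your write-up half-contradicts. In the proof of Proposition \ref{tongtai} the homomorphic law is invoked only on the two compositions $\lambda_x\lambda_{-x}$ (applied to $-x+xz$) and $\lambda_{-x}\lambda_x$ (applied to $z$); since $(S,+)$ is Clifford in any weak left brace, $x+(-x)=(-x)+x=x-x$, so the anti-homomorphic law $\lambda_y\lambda_x=\lambda_{x+y}$ produces the very same operator $\lambda_{x-x}$ at exactly those spots, and the whole computation of Proposition \ref{tongtai} transfers verbatim. In other words, the reversed composition order is \emph{invisible} at the only places the hypothesis enters --- the opposite of your remark that ``the reversed composition order is what makes the two evaluations differ'', which suggests you had not located where the hypothesis is actually used. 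Note also that the paper does not take this route: it proves the key relation $x=x^{-1}xx$ through the bespoke identities (\ref{lambda-antihomomorphic3})--(\ref{lambda-antihomomorphic5}) together with preliminary computations such as $x^{-1}(-x+xx)=x^{-1}+x$. Once you write out the verification above, your mirrored argument is arguably shorter than the paper's; but without it, the proposal does not yet establish the proposition.
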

\begin{proof}Let $x,y,z\in S$. We first observe by (\ref{dengshi}) and Lemma \ref{wang5} that $$-x^{-1}+x^{-1}(-x)=x^{-1}x-x^{-1}+x^{-1}(-x)=x^{-1}(x-x)=x^{-1}xx^{-1}=x^{-1},$$ $-x+x(-x^{-1})=x$ and  $$x^{-1}(-x+xx)=x^{-1}(-x)-x^{-1}+x^{-1}xx=x^{-1}(-x)-x^{-1}+x^{-1}x+x$$$$=x^{-1}x+x^{-1}(-x)-x^{-1}+x=x^{-1}-x^{-1}+x^{-1}(-x)-x^{-1}+x
=x^{-1}+x^{-1}-x^{-1}+x=x^{-1}+x.$$This implies that
$$-x+x^{-1}(x+(-x+xx)x)=-x+x^{-1}x-x^{-1}+x^{-1}(-x+xx)x=-x-x^{-1}+x^{-1}(-x+xx)x$$$$=-x-x^{-1}+(x^{-1}+x)x\overset{(\ref{lambda-antihomomorphic1})}{=}-x+x(-x^{-1}+x^{-1}x)
=-x+x(-x^{-1})=x.$$
On the other hand, by Lemma \ref{wang5} we have
\begin{equation}\label{lambda-antihomomorphic3}
\begin{aligned}
&-x+y(x+(-x)z)\overset{(\ref{lambda-antihomomorphic2})}{=}-x+x+(-x+y)z\\
&=(-x+x)(-x+y)z=(-x+x-x+y)z=(-x+y)z.
\end{aligned}
\end{equation}
Take $y=z=x$ in (\ref{lambda-antihomomorphic3}). Then
\begin{equation}\label{lambda-antihomomorphic4}
 -x+x(x+(-x)x)=(-x+x)x=xx^{-1}x=x.
\end{equation}
Adding $x$ from the left on the both sides of (\ref{lambda-antihomomorphic3}), we have
\begin{equation}\label{lambda-antihomomorphic5}
 (x-x)y(x+(-x)z)=x-x+y(x+(-x)z)=x+(-x+y)z.
\end{equation}
Take $y=xx$ and $z=x$ in (\ref{lambda-antihomomorphic5}). Then $(x-x)xx(x+(-x)x)=x+(-x+xx)x$.
This  gives that
$$-x+x^{-1}(x+(-x+xx)x)=-x+x^{-1}(x-x)xx(x+(-x)x)$$$$=-x+x^{-1}xx^{-1}xx(x+(-x)x)=-x+x^{-1}xx(x+(-x)x)$$$$=-x+x^{-1}x+x(x+(-x)x)=x^{-1}x-x+x(x+(-x)x)
\overset{(\ref{lambda-antihomomorphic4})}{=}x^{-1}x+x=x^{-1}xx.$$
By the above statement, we have $x=x^{-1}xx$, whence $xx^{-1}=x^{-1}xxx^{-1}$. Substituting $x$ by $x^{-1}$ in this equation, we obtain $x^{-1}x=xx^{-1}x^{-1}x$.
This gives $x^{-1}x=xx^{-1}x^{-1}x=x^{-1}xxx^{-1}=xx^{-1}$. Hence $(S,\cdot)$ is also a Clifford semigroup and $(S,+, \cdot)$ is a dual weak left brace.
\end{proof}

From the above statements, we have known that  symmetric, $\lambda$-homomorphic and $\lambda$-anti-homomorphic weak left braces are all dual weak left braces.
To describe their structures, we need to recall a construction method of dual weak left braces  given in \cite{Catino-Mazzotta-Stefanelli3}.
Recall that a {\em lower semilattice} $(Y, \leq)$ is a partially ordered set  in which any two elements $\alpha$ and $\beta$ have a greatest lower bound $\alpha \beta$.
\begin{lemma}[Theorem 1 in \cite{Catino-Mazzotta-Stefanelli3}]\label{gouzao}
Let $Y$ be a lower semilattice, $\left\{(B_{\alpha}, +, \cdot)\ \left|\ \alpha \in Y\right.\right\}$ a family of disjoint skew left braces. For each pair $\alpha,\beta$ of elements of $Y$ such that $\alpha \geq \beta$, let $\phii{\alpha}{\beta}:B_{\alpha}\to B_{\beta}$ be a skew left brace homomorphism such that
\begin{enumerate}
    \item[(1)] $\phii{\alpha}{\alpha}$ is the identical automorphism of $B_{\alpha}$, for every $\alpha \in Y$,
    \item[(2)] $\phii{\beta}{\gamma}{}\phii{\alpha}{\beta}{} = \phii{\alpha}{\gamma}{}$, for all $\alpha, \beta, \gamma \in Y$ such that $\alpha \geq \beta \geq \gamma$.
\end{enumerate}
Then $S = \bigcup\left\{B_{\alpha}\ \left|\ \alpha\in Y\right.\right\}$ endowed with the addition and the multiplication, respectively, defined by
\begin{align*}
    a+b= \phii{\alpha}{\alpha\beta}(a)+\phii{\beta}{\alpha\beta}(b)
    \quad\text{and}\quad
     a b= \phii{\alpha}{\alpha\beta}(a) \phii{\beta}{\alpha\beta}(b),
\end{align*}
for all $a\in B_{\alpha}$ and $b\in B_{\beta}$, is a dual weak left brace, called \emph{strong semilattice $S$ of skew left braces $B_{\alpha}$} and denoted by $S=[Y; B_\alpha;\phii{\alpha}{\beta}]$.  Conversely, any dual weak left brace can be constructed in this way.
\end{lemma}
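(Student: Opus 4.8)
The plan is to treat the two directions of Lemma \ref{gouzao} separately, in both cases reducing to the classical structure theory of Clifford semigroups (every Clifford semigroup is a strong semilattice of groups) applied \emph{simultaneously} to the additive and multiplicative structures. For the forward direction, I would first observe that since each connecting map $\phii{\alpha}{\beta}$ is a homomorphism for both $+$ and $\cdot$, the given data restricts to a strong semilattice of groups on each of $(B_\alpha,+)$ and $(B_\alpha,\cdot)$; hence both $(S,+)$ and $(S,\cdot)$ are Clifford semigroups, with $E(S,+)=E(S,\cdot)=\{0_\alpha\mid\alpha\in Y\}$, the common additive/multiplicative identities of the $B_\alpha$. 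It then remains to verify the two axioms in (\ref{dengshi}). For $x\in B_\alpha$, $y\in B_\beta$, $z\in B_\gamma$, set $\delta=\alpha\beta\gamma$; since both operations are computed by first pushing down along the $\phi$'s to the meet and then operating there, and since the $\phi$'s are skew brace homomorphisms, both sides of $x(y+z)=xy-x+xz$ reduce to the corresponding expression evaluated on $\phii{\alpha}{\delta}(x),\phii{\beta}{\delta}(y),\phii{\gamma}{\delta}(z)\in B_\delta$, where the skew brace axiom holds. The inverse $x^{-1}$ of $x\in B_\alpha$ in $(S,\cdot)$ is its group inverse inside $B_\alpha$, and likewise $-x\in B_\alpha$, so $xx^{-1}=-x+x=0_\alpha$ because $B_\alpha$ is a skew left brace. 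As $(S,\cdot)$ is Clifford, $(S,+,\cdot)$ is a dual weak left brace.

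The converse is the substantive half, and its heart is to show that the additive and multiplicative Clifford decompositions of a dual weak left brace are \emph{aligned}. Given a dual weak left brace $(S,+,\cdot)$, Lemma \ref{wang5} supplies that $(S,+)$ is Clifford and $x^0=x-x=-x+x=xx^{-1}=x^{-1}x$ for all $x$. I would take $Y=E(S)=E(S,+)=E(S,\cdot)$, a lower semilattice under $f\le e\iff e+f=f$ with meet $e+f=ef$, and for each $e\in Y$ set $B_e=\{x\in S\mid x^0=e\}$. The decisive observation is that $B_e$ is at once the additive and the multiplicative maximal subgroup at $e$, because the additive idempotent $x^0$ and the multiplicative idempotent $xx^{-1}$ of $x$ coincide. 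Using $(x+y)^0=x^0+y^0$ and $(xy)^0=x^0+y^0$ from Lemma \ref{wang5}, each $B_e$ is closed under both operations, and since both $(B_e,+)$ and $(B_e,\cdot)$ are groups on which the axioms (\ref{dengshi}) hold, $(B_e,+,\cdot)$ is a skew left brace; the $B_e$ are pairwise disjoint as $x^0$ is uniquely determined by $x$.

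For the connecting maps I would exploit the relation $a\cdot e=a+e=e+a$ for $e\in E(S)$ from Lemma \ref{wang5}: for $e\ge f$ define $\phii{e}{f}\colon B_e\to B_f$ by $\phii{e}{f}(x)=x+f=xf$. This single map is simultaneously the additive and the multiplicative connecting homomorphism, hence a skew left brace homomorphism, and it lands in $B_f$ since $(x+f)^0=e+f=f$. Conditions (1) and (2) are then immediate: $\phii{e}{e}(x)=x+e=x$ (as $e=x^0$) and $\phii{f}{g}\phii{e}{f}(x)=x+(f+g)=x+g=\phii{e}{g}(x)$ when $e\ge f\ge g$. Finally I would check that the strong-semilattice operations built from this data recover the original ones: for $x\in B_e$, $y\in B_f$ with $g=e+f$, centrality of $g$ in $(S,+)$ (Lemma \ref{jichu}) together with $a+a^0=a$ and $g=(x+y)^0$ gives $\phii{e}{g}(x)+\phii{f}{g}(y)=(x+g)+(y+g)=x+y$, and dually multiplicative centrality of idempotents (Lemma \ref{Clifford}) with $a\cdot a^0=a$ and $g=(xy)^0$ gives $\phii{e}{g}(x)\,\phii{f}{g}(y)=(xg)(yg)=xy$.

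The main obstacle I expect is exactly this alignment step in the converse: one must confirm that the two a priori distinct Clifford structures on $S$ share the same indexing semilattice $E(S)$, the same subgroups $B_e$, and—most importantly—the same connecting maps. Everything turns on the single identity $x+e=x\cdot e$ for idempotent $e$ (Lemma \ref{wang5}); once that is secured, both the construction and its inversion are routine manipulations with $(a+b)^0=(ab)^0=a^0+b^0$ and the centrality of idempotents.
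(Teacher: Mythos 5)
Your proposal is correct, but note that there is nothing in this paper to compare it against: the statement is imported verbatim as Theorem 1 of \cite{Catino-Mazzotta-Stefanelli3}, with no proof supplied here. Your reconstruction follows what is essentially the strategy of that cited reference. In the forward direction, the ``push everything down to the meet'' verification is sound: a strong semilattice of groups is a Clifford semigroup, the $\phi$'s being simultaneously additive and multiplicative homomorphisms lets both sides of $x(y+z)=xy-x+xz$ collapse to the skew brace identity in $B_{\alpha\beta\gamma}$, and $xx^{-1}=-x+x$ holds because in each skew left brace the additive and multiplicative identities coincide. In the converse, you correctly identify the crux — the alignment of the two Clifford decompositions — and secure it with exactly the right facts from Lemma \ref{wang5}: $x^0=x-x=xx^{-1}$ makes the additive and multiplicative maximal subgroups at each $e\in E(S)$ coincide, $(x+y)^0=(xy)^0=x^0+y^0$ gives closure of each $B_e$, and $x+e=xe$ makes the single map $\phi_{e,f}(x)=x+f=xf$ serve as both connecting homomorphisms, with conditions (1), (2) and the recovery of the original operations following from $x+x^0=x$, $f+g=g$, and centrality of idempotents. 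Two small points that you use without justification and that a complete write-up should include, though both are easy and true: that $0_\alpha=1_\alpha$ in a skew left brace (needed for $E(S,+)=E(S,\cdot)$ in the forward direction; this is standard, going back to \cite{Guarnieri-Vendramin}), and that for $x\in B_e$ the von Neumann inverses of $x$ in $(S,+)$ and $(S,\cdot)$ agree with its group inverses inside $B_e$, so that restricting the axioms of $(S,+,\cdot)$ to $B_e$ really does yield a skew left brace.
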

In view of  Propositions \ref{duichen}, \ref{tongtai}, \ref{fantongtai} and Lemma \ref{gouzao}, it is not hard to prove the following  proposition which gives the constructions of symmetric,  $\lambda$-homomorphic and$\lambda$-anti-homomorphic  weak left braces.
\begin{prop}\label{yy}
A symmetric (respectively, $\lambda$-homomorphic, $\lambda$-anti-homomorphic) weak left brace is exactly a strong semilattice of a family of symmetric (respectively, $\lambda$-homomorphic, $\lambda$-anti-homomorphic) skew left braces.
\end{prop}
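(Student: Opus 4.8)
The plan is to reduce everything to Lemma~\ref{gouzao}, together with the observation that each of the three defining conditions is a \emph{regular identity} (the same variables occur on both sides) in the operations $+,\cdot$ and the derived unary operations $x\mapsto -x$ and $x\mapsto x^{-1}$. By Propositions~\ref{duichen}, \ref{tongtai} and \ref{fantongtai}, a symmetric (respectively, $\lambda$-homomorphic, $\lambda$-anti-homomorphic) weak left brace $(S,+,\cdot)$ is in particular a dual weak left brace, so by Lemma~\ref{gouzao} it admits a decomposition $S=[Y;B_\alpha;\phii{\alpha}{\beta}]$ as a strong semilattice of skew left braces; conversely every such strong semilattice is a dual weak left brace. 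It therefore suffices to prove that $(S,+,\cdot)$ satisfies the relevant identity if and only if every component $B_\alpha$ does, since the latter is exactly the assertion that each $B_\alpha$ is a symmetric (respectively, $\lambda$-homomorphic, $\lambda$-anti-homomorphic) skew left brace.

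For the forward direction I would first check that each $B_\alpha$ is a sub-skew-left-brace of $(S,+,\cdot)$. Since $\phii{\alpha}{\alpha}={\rm id}_{B_\alpha}$, for $a,b\in B_\alpha$ the semilattice formulas give $a+b$ and $ab$ computed inside $B_\alpha$; moreover $H_a=B_\alpha$ is a group under $+$ with identity $a^0$, so $-a\in B_\alpha$, and likewise $a^{-1}\in B_\alpha$ because $a^{-1}a=aa^{-1}=a^0$ lies in $B_\alpha$. Hence $B_\alpha$ is closed under all the operations occurring in the defining identities, and these restrict on $B_\alpha$ to its own operations. Consequently any identity that holds universally in $S$ holds in particular for arguments taken in $B_\alpha$, so each $B_\alpha$ inherits the symmetric (respectively, $\lambda$-homomorphic, $\lambda$-anti-homomorphic) condition and is a skew left brace of the required type.

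For the converse I would fix $x\in B_\alpha$, $y\in B_\beta$, $z\in B_\gamma$, put $\delta=\alpha\beta\gamma$, and write $x'=\phii{\alpha}{\delta}(x)$, $y'=\phii{\beta}{\delta}(y)$, $z'=\phii{\gamma}{\delta}(z)$, all lying in $B_\delta$. Using the definition of $+$ and $\cdot$ on $S$, the fact that each $\phii{\alpha}{\beta}$ is a skew left brace homomorphism (so it commutes with $+,\cdot$ and hence with the derived operations $-$ and ${}^{-1}$), and the compatibility $\phii{\beta}{\gamma}\phii{\alpha}{\beta}=\phii{\alpha}{\gamma}$, I would show that each subterm of the identity is carried into $B_\delta$ and that both sides of, say, the symmetric axiom $x+(yz)=(x+y)x^{-1}(x+z)$ evaluate in $S$ to the corresponding expressions $x'+(y'z')$ and $(x'+y')(x')^{-1}(x'+z')$ computed inside $B_\delta$. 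Since $B_\delta$ is assumed symmetric, these two expressions coincide, which yields the identity for $x,y,z$ in $S$. The $\lambda$-homomorphic and $\lambda$-anti-homomorphic cases are treated identically, after rewriting $\lambda_x\lambda_y(z)=\lambda_{x+y}(z)$ and $\lambda_y\lambda_x(z)=\lambda_{x+y}(z)$ as the regular identities $-x+x(-y+yz)=-y-x+(x+y)z$ and $-y+y(-x+xz)=-y-x+(x+y)z$ in $+,\cdot,-$.

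The main obstacle is the bookkeeping in the converse: for the symmetric axiom one must track into which component each of the intermediate products $yz$, $x+y$, $x+z$, $(x+y)x^{-1}$ and finally $(x+y)x^{-1}(x+z)$ falls, and verify via conditions (1)--(2) of Lemma~\ref{gouzao} that applying the connecting maps collapses every subterm to $B_\delta$ consistently. No genuine difficulty arises beyond this organization, since the connecting maps respect each operation involved; the product of three factors carrying three distinct indices, together with the inverse $x^{-1}$, simply makes the symmetric case the most index-heavy of the three.
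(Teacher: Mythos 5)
Your proposal is correct and takes exactly the route the paper intends: the paper offers no detailed proof, saying only that the result follows from Propositions \ref{duichen}, \ref{tongtai}, \ref{fantongtai} and Lemma \ref{gouzao}, which is precisely the reduction you perform. The details you supply---that the three defining conditions are regular identities preserved under restriction to the components $B_\alpha$ (closure under $+$, $\cdot$, $-$, ${}^{-1}$ via $\phii{\alpha}{\alpha}={\rm id}$) and transferred to the meet component $B_{\alpha\beta\gamma}$ by the connecting skew left brace homomorphisms in the converse direction---are exactly the verification the paper leaves to the reader.
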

Observe that \cite[Proposition 3.11]{Bardakov2} says that a skew left brace $(S, +, \cdot)$ is symmetric if and only if $(S, +, \cdot)$ is $\lambda$-anti-homomorphic. This together with  Proposition \ref{yy} yields the following result.
\begin{prop}
Symmetric weak left braces are exactly $\lambda$-anti-homomorphic weak left braces.
\end{prop}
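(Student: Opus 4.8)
The plan is to reduce the assertion entirely to the corresponding fact about skew left braces, using the structure theorem recorded in Proposition \ref{yy} as the bridge. Both classes being compared — symmetric weak left braces and $\lambda$-anti-homomorphic weak left braces — have already been identified in Proposition \ref{yy} with strong semilattices of the respective kinds of skew left braces. Consequently it suffices to know that, at the level of the individual skew left brace blocks $B_\alpha$, the two defining properties coincide, and this is exactly the content of \cite[Proposition 3.11]{Bardakov2}, which states that a skew left brace is symmetric if and only if it is $\lambda$-anti-homomorphic.

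First I would take $(S,+,\cdot)$ to be a symmetric weak left brace. By Proposition \ref{yy} we may write $S=[Y;B_\alpha;\phii{\alpha}{\beta}]$ as a strong semilattice in which every component $B_\alpha$ is a symmetric skew left brace. Applying \cite[Proposition 3.11]{Bardakov2} to each block, each $B_\alpha$ is then a $\lambda$-anti-homomorphic skew left brace, while the connecting maps $\phii{\alpha}{\beta}$ remain skew left brace homomorphisms satisfying the conditions (1)--(2) of Lemma \ref{gouzao}, which involve neither the symmetric nor the $\lambda$-anti-homomorphic axiom. Thus the same decomposition exhibits $S$ as a strong semilattice of $\lambda$-anti-homomorphic skew left braces, and the converse half of Proposition \ref{yy} then returns that $S$ is a $\lambda$-anti-homomorphic weak left brace. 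Running the identical argument with the two block-properties interchanged, and invoking the other implication of \cite[Proposition 3.11]{Bardakov2}, yields the reverse inclusion, so the two classes coincide.

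The only point requiring attention — and the nearest thing to an obstacle — is to be sure that the property in question is genuinely \emph{local}, that is, that it can be tested block-by-block along the same semilattice decomposition in both directions. This is precisely what the biconditional phrasing of Proposition \ref{yy} guarantees: the semilattice $Y$ and the blocks $B_\alpha$ are fixed by the underlying Clifford structure, so no compatibility between the two decompositions needs to be checked separately. With that observation in place, the argument needs nothing beyond combining Proposition \ref{yy} with \cite[Proposition 3.11]{Bardakov2}, and I expect the full proof to be a single short paragraph.
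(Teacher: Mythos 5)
Your proposal is correct and follows exactly the paper's own argument: the paper likewise derives this proposition by combining Proposition \ref{yy} with \cite[Proposition 3.11]{Bardakov2}, merely stating the combination more tersely than you do. Your extra care about the block-by-block locality of the two properties is a sound elaboration of the same reasoning, not a different route.
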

\noindent {\bf Funding:}   The paper is supported by Nature Science Foundations of  China (12271442, 11661082).

\end{document}